\newtheorem{prop}{Proposition}
\newtheorem{proposition}{Proposition}
\newtheorem{lemma}[prop]{Lemma}
\newtheorem{cor}[prop]{Corollary}
\newtheorem{conj}[prop]{Conjecture}
\newtheorem{theorem}[prop]{Theorem}
\newcommand{\R}{\mathbb R}
\newcommand{\Q}{\mathbb Q}
\newcommand{\Z}{\mathbb Z}
\newcommand{\C}{\mathbb C}
\newcommand{\N}{\mathbb N}
\newcommand{\T}{\mathbb T}
\newcommand{\w}{\omega}
\newcommand{\e}{\varepsilon}
\newcommand{\g}{\gamma}
\newcommand{\p}{\varphi}
\newcommand{\s}{\psi}
\renewcommand{\a}{\alpha}
\renewcommand{\b}{\beta}
\renewcommand{\d}{\delta}
\renewcommand{\P}{\mathbb{P}}
\newcommand{\supp}{\mathrm{supp}}
\newcommand{\mc}{\mathcal}
\def\set4{\mathcal I}
\def\tup14{(1,2,3,4)}
\newcommand\vwidehat[1]{\arraycolsep=0pt\relax%
\begin{array}{c}
\stretchto{
  \scaleto{
    \scalerel*[\widthof{\ensuremath{#1}}]{\kern-.5pt\bigwedge\kern-.5pt}
    {\rule[-\textheight/2]{1ex}{\textheight}} 
  }{\textheight} %
}{0.5ex}\\           
#1\\                 
\rule{-1ex}{0ex}
\end{array}
}
\newtheorem*{comm*}{Comment}
\newtheorem{definition}{Definition}
\newtheorem{corollary}{Corollary}
\newtheorem{thm*}{Theorem}[section]
\newcommand\widecheck[1]{%
\savestack{\tmpbox}{\stretchto{%
  \scaleto{%
    \scalerel*[\widthof{\ensuremath{#1}}]{\kern-.6pt\bigwedge\kern-.6pt}%
    {\rule[-\textheight/2]{1ex}{\textheight}}
  }{\textheight}%
}{0.5ex}}%
\stackon[1pt]{#1}{\scalebox{-1}{\tmpbox}}%
}
\def\XXint#1#2#3{{\setbox0=\hbox{$#1{#2#3}{\int}$ }
\vcenter{\hbox{$#2#3$ }}\kern-.6\wd0}}
\begin{document}

\title{Sharp superlevel set estimates for small cap decouplings of the parabola}
\author{Yuqiu Fu, Larry Guth, and Dominique Maldague}

\date{\today}

\maketitle

\begin{abstract} We prove sharp bounds for the size of superlevel sets $\{x\in \R^2:|f(x)|>\a\}$ where $\a>0$ and $f:\R^2\to\C$ is a Schwartz function with Fourier transform supported in an $R^{-1}$-neighborhood of the truncated parabola $\P^1$. These estimates imply the small cap decoupling theorem for $\P^1$ from \cite{smallcap} and the canonical decoupling theorem for $\P^1$ from \cite{BD}. New $(\ell^q,L^p)$ small cap decoupling inequalities also follow from our sharp level set estimates. 

\end{abstract}




In this paper, we further develop the high/low frequency proof of decoupling for the parabola \cite{gmw} to prove sharp level set estimates which recover and refine the small cap decoupling results for the parabola in \cite{smallcap}. We begin by describing the problem and our results in terms of exponential sums. The main results in full generality are in \textsection\ref{techsec}. 




For $N\ge 1$, $R\in[N,N^2]$, and $2\le p$, let $D(N,R,p)$ denote the smallest constant so that
\begin{equation}\label{const} |Q_{R}|^{-1}\int_{Q_{R}}|\sum_{\xi\in\Xi}a_\xi e((x,t)\cdot(\xi,\xi^2))|^pdxdt\le D(N,R,p) N^{p/2}  \end{equation}
for any collection $\Xi\subset[-1,1]$ with $|\Xi|\sim N$ consisting of $\sim\frac{1}{N}$-separated points, $a_\xi\in\C$
with $|a_\xi|\sim 1$, and any cube $Q_{R}\subset\R^2$ of sidelength $R$.

A corollary of the small cap decoupling theorem for the parabola in \cite{smallcap} is that if $2\le p\le 2+2s$ for $R=N^s$, then 
\begin{equation}\label{small} D(N,R,p)\le C_\e N^\e.   \end{equation}
This estimate is sharp, up to the $C_\e N^\e$ factor, which may be seen by Khintchine's inequality. The range $2\le p\le 2+2s$ is the largest range of $p$ for which $D(N,R,p)$ may be bounded by sub-polynomial factors in $N$. 
The case $R=N^2$ of \eqref{small} follows from the canonical $\ell^2$ decoupling theorem of Bourgain and Demeter for the parabola \cite{BD}. For $R<N^2$ and the subset $\Xi=\{k/N\}_{k=1}^N$, the inequality \eqref{const} is an estimate for the moments of exponential sums over subsets smaller than the full domain of periodicity (i.e. $N^2$ in the $t$-variable). Bourgain investigated examples of this type of inequality in \cite{smallcapb1,smallcapb2}.

By a pigeonholing argument (see Section 5 of \cite{gmw}), \eqref{small} follows from upper bounds for superlevel sets $U_\a$ defined by
\[ U_\a=\{(x,t)\in\R^2: |\sum_{\xi\in\Xi}a_\xi e((x,t)\cdot(\xi,\xi^2))|>\a\} .  \]
In particular, \eqref{small} is equivalent, up to a $\log N$ factor, to proving that for any $\a>0$ and for $R=N^s$,
\begin{equation}\label{smalllvlset} \a^{2+2s} |U_\a\cap Q_R|\le C_\e R^\e N^{1+s}R^2\end{equation}
when $\Xi$, $a_\xi$ satisfy the hypotheses following \eqref{const}. In this paper, we improve the above superlevel set estimate for all $\a>0$ strictly between $N^{1/2}$ and $N$. 

\begin{theorem} \label{mainconjexp} Let $R\in[N,N^2]$. For any $\e>0$, there exists $C_\e<\infty $ such that 
\[|U_\a\cap Q_{R}|\le C_\e N^\e \begin{cases} \frac{N^{2}R}{\a^4}\sum\limits_{\xi\in\Xi}|a_\xi|^2\quad&\text{if}\quad \a^2>R\\
\frac{N^{2}R^2}{\a^6}\sum\limits_{\xi\in\Xi}|a_\xi|^2\quad&\text{if}\quad N\le \a^2\le R\\
R^2 \quad&\text{if}\quad \a^2<N. 
\end{cases} \]
whenever $\Xi\subset[-1,1]$ is a $\gtrsim \frac{1}{N}$-separated subset, $|a_\xi|\le 1$ for each $\xi\in\Xi$, and $Q_{R}\subset\R^2$ is a cube of sidelength $R$. 
\end{theorem}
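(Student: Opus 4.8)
The plan is to prove, by induction on scales, the more general estimate of \textsection\ref{techsec} --- valid for any Schwartz $f$ with $\widehat f$ supported in the $R^{-1}$-neighborhood of $\P^1$ and for superlevel sets inside arbitrary planks --- of which Theorem~\ref{mainconjexp} is the special case of a cube. After the standard pigeonholing reductions (cf.\ \cite[\textsection 5]{gmw}) I will assume $a_\xi\in\{0,1\}$, that for every dyadic $\rho$ each arc of length $\rho$ contains, up to constants, the same number of frequencies with $a_\xi=1$, and that $Q_R$ is centered at the origin. The case $\a^2<N$ is the trivial bound $|U_\a\cap Q_R|\le|Q_R|=R^2$, so only $\a^2\ge N$ requires work, and the target is $\a^4|U_\a\cap Q_R|\lesssim_\e N^\e N^2R\sum_\xi|a_\xi|^2$ when $\a^2>R$ and $\a^6|U_\a\cap Q_R|\lesssim_\e N^\e N^2R^2\sum_\xi|a_\xi|^2$ when $N\le\a^2\le R$.

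The engine is the high/low decomposition of $|f|^2$ from \cite{gmw}. I fix dyadic scales $1=R_0<R_1<\cdots<R_n\sim R$; at stage $k$ I partition $\Xi$ into arcs $\tau$ of length $R_k^{-1/2}$, write $f=\sum_\tau f_\tau$, and set $g_k=\sum_{\ell(\tau)=R_k^{-1/2}}|f_\tau|^2$. Because the curvature of $\P^1$ keeps the Fourier support of $f_\tau\overline{f_{\tau'}}$ (for $\tau\ne\tau'$ of equal length) away from the origin, $g_k$ coincides, up to rapidly decaying tails, with the part of $|f|^2$ at frequencies $\lesssim R_k^{-1/2}$. Telescoping,
\[ |f|^2=g_n+\sum_{k=1}^n h_k,\qquad h_k:=g_{k-1}-g_k=\sum_{\ell(\tau)=R_{k-1}^{-1/2}}\Big(|f_\tau|^2-\sum_{\tau'\subset\tau}|f_{\tau'}|^2\Big), \]
and $\widehat{h_k}$ is supported in the annulus $R_k^{-1/2}\lesssim|\zeta_1|\lesssim R_{k-1}^{-1/2}$. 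Since $n\lesssim\log N$, applying this to the pointwise inequality $\a^2<|f|^2\le|g_n|+\sum_k|h_k|$ on $U_\a$ and pigeonholing shows that, at the cost of an $N^\e$ factor, it suffices to bound $|\{|g_n|\gtrsim\a^2\}\cap Q_R|$ and each $|\{|h_k|\gtrsim\a^2/\log N\}\cap Q_R|$.

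Next I would estimate the pieces, reducing each to a lower-complexity instance of the same theorem. For a high piece $h_k$: for each fixed $\zeta_1$ in the annulus, the contributions of distinct parent arcs $\tau$ of length $R_{k-1}^{-1/2}$ lie, via the relation $\zeta_2=\zeta_1(\xi+\xi')$, in essentially disjoint $\zeta_2$-intervals, so a local (on $R$-cubes, against a weight adapted to $Q_R$) $L^2$-orthogonality gives $\|h_k\|_{L^2(w_{Q_R})}^2\lesssim_\e N^\e\sum_{\ell(\tau)=R_{k-1}^{-1/2}}\||f_\tau|^2-\sum_{\tau'\subset\tau}|f_{\tau'}|^2\|_{L^2(w_{Q_R})}^2$; bounding the inner norms --- which are square-function objects for the individual $f_\tau$ --- by the induction hypothesis after parabolically rescaling each $\tau$ to $[-1,1]$, and using the local constancy of $h_k$ at the dual spatial scale, keeps $|\{|h_k|\gtrsim\a^2/\log N\}\cap Q_R|$ within the asserted budget. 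For the low piece $g_n=\sum_{\ell(\tau)=R^{-1/2}}|f_\tau|^2$, each $f_\tau$ is locally constant on its dual $R^{1/2}\times R$ plank, these planks fanning out into a ``bush''; since $\|f_\tau\|_\infty\le\#\{\xi\in\tau:a_\xi=1\}\lesssim R^{-1/2}\sum_\xi|a_\xi|^2$, on $\{g_n\gtrsim\a^2\}$ at least $\gtrsim\a^2\max_\tau\|f_\tau\|_\infty^{-2}$ of the $R^{1/2}$ arcs satisfy $|f_\tau|\gtrsim\a R^{-1/4}$, whence
\[ |\{g_n\gtrsim\a^2\}\cap Q_R|\ \lesssim\ \frac{\max_\tau\|f_\tau\|_\infty^2}{\a^2}\sum_{\ell(\tau)=R^{-1/2}}|\{|f_\tau|\gtrsim\a R^{-1/4}\}\cap Q_R|, \]
and each summand is controlled by the induction hypothesis after parabolic rescaling. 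Summing the resulting geometric series over the $\lesssim\log N$ scales costs only $N^\e$, and optimizing the intermediate scales $R_1,\dots,R_{n-1}$ yields the three cases.

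The crux is the sharp estimate for the square-function superlevel set $\{g_n\gtrsim\a^2\}$: this is exactly the focusing/bush configuration that saturates the theorem when $\a^2>R$, and a naive Chebyshev or $L^4$ input overshoots by a power of $R$. Getting the exponent of $R$ right requires running the induction for the decoupling inequality jointly with an induction for a sharp small-cap square-function estimate for $\P^1$, exploiting the plank geometry; this, together with the fact that parabolic rescaling turns the cube $Q_R$ into a genuinely non-square plank (and forces one to carry weights), is why the whole argument must be phrased in the plank generality of \textsection\ref{techsec}. A secondary point is to check that in the middle range $N\le\a^2\le R$ the same scheme reproduces the larger bound $N^2R^2\a^{-6}\sum_\xi|a_\xi|^2$, which is governed by a lower-rank extremal configuration rather than the full bush, with no loss.
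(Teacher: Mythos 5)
Your plan correctly identifies the general framework the paper uses (a multi-scale high/low decomposition of the square function in the style of \cite{gmw}), but it is missing the three ingredients that actually make the sharp bounds close, and the gap you yourself flag as ``the crux'' is precisely where the paper's proof does something you have not described. First, the paper never works with the raw $f_\tau$: it prunes wave packets at every scale, discarding those tubes $T_{\tau_k}$ on which $\|\s_{T_{\tau_k}}f_{\tau_k}^{k+1}\|_\infty > R^{M\d}\lambda(1)/\a$. This pruning is what gives (i) the pointwise bound $G^\ell\lesssim\lambda(1)$ on the low part of the square function, hence the $\a^{-6}\lambda(1)^2$ bound on the low set, and (ii) the $L^\infty$ control $\|f^{k}_{\tau_k}\|_\infty\lesssim\lambda(1)/\a$ needed to convert $\int|g_k^h|^2\lesssim\sum\int|f^{k+1}_{\tau_{k+1}}|^4$ into an $L^2$ quantity of the right size. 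In your scheme the unpruned $\|f_\tau\|_\infty$ at a coarse scale can be as large as the number of frequencies in $\tau$, and your proposed $L^2$-orthogonality bound for $h_k$ then overshoots; ``the induction hypothesis after parabolically rescaling'' does not repair this, because the induction hypothesis is a superlevel-set bound for $f_\tau$, not an $L^4$ or square-function bound, and at the finest scale the rescaled problem is trivial and returns only Chebyshev.

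Second, the paper passes from $|f|>\a$ to the square function through a broad--narrow reduction and a bilinear (C\'ordoba) restriction estimate, $\a^4|\mathrm{Br}_\a\cap H|\lesssim\int_{\mc{N}_{R^{1/2}}(\mathrm{Br}_\a\cap H)}G^2$; your plan has no bilinear step at all. Third, the sharp constant in the $\a^2>R$ regime comes from High Lemma~\ref{high}: $\int|G*\widecheck{\eta}_{\sim s}|^2\lesssim\lambda(s^{-1}R^{-1})\lambda(s)\sum_\g\|f_\g\|_2^2$, which exploits the bounded overlap of the difference sets $\theta-\theta$ on the annulus $|\xi|\sim s$. Your substitute for this --- the bush count $|\{g_n\gtrsim\a^2\}|\lesssim\a^{-2}\max_\tau\|f_\tau\|_\infty^2\sum_\tau|\{|f_\tau|\gtrsim\a R^{-1/4}\}|$ followed by Chebyshev on each plank-locally-constant $f_\tau$ --- gives $N^2\a^{-4}R^{-1/2}\sum_\g\|f_\g\|_2^2$, a factor $R^{1/2}$ worse than the target $N^2\a^{-4}R^{-1}\sum_\g\|f_\g\|_2^2$, exactly as you anticipate. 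The ``joint induction with a sharp small-cap square-function estimate'' you invoke to fix this is not formulated or proved, and it is not how the paper proceeds; the paper instead gets the gain from the $H/\Omega_k/L$ decomposition together with the pruning and the two high lemmas. As written, the proposal is an outline of the right genre with the decisive estimates left open.
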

Our superlevel set estimates are essentially sharp, which follows from analyzing the function $F(x,t)=\sum_{n=1}^Ne((x,t)\cdot(\frac{n}{N},\frac{n^2}{N^2}))$. It is not known whether the implicit constant in the upper bound of \eqref{small} goes to infinity with $N$ except in the case that $p=6$ and $s=2$, when the same example $F(x,t)=\sum_{n=1}^Ne((x,t)\cdot(\frac{n}{N},\frac{n^2}{N^2}))$ shows that $D(N,N^2,6)\gtrsim (\log N)$ \cite{bcountex}. Roughly, the argument is that for each dyadic value $\a\in[N^{3/4},N]$, 
one can show by counting the ``major arcs" that
\[ \a^6\{(x,t)\in Q_{N^2}:|F(x,t)|\sim \a\}|\gtrsim N^4\cdot N^3 . \]
Since there are $\sim \log N$ values of $\a$, the lower bound for $\int_{Q_{N^2}}|F|^6$ follows. Theorem \ref{mainconjexp} implies that the corresponding superlevel set estimates \eqref{smalllvlset} are not sharp for $1\le s<2$, unless $\a\sim N$ or $\a^2\sim N$, which leads to the following conjecture.

\begin{conj} Let $s\in[1,2)$ and $2\le p\le 2+2s$. There exists $C(s)>0$ so that
\[ D(N,N^s,p)\le C(s). \]
\end{conj}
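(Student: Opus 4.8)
We sketch the line of attack one would pursue. First I would reduce to the endpoint exponent $p=2+2s$. For any admissible $f=\sum_{\xi\in\Xi}a_\xi e((x,t)\cdot(\xi,\xi^2))$ one has the $L^2$ bound $\frac{1}{|Q_R|}\int_{Q_R}|f|^2\lesssim\sum_\xi|a_\xi|^2\lesssim N$ by orthogonality of the characters over a box of sidelength $R\ge N$, so H\"older's inequality interpolates any bound $\frac{1}{|Q_R|}\int_{Q_R}|f|^{2+2s}\lesssim_s N^{1+s}$ down to the whole range $2\le p\le 2+2s$; equivalently $D(N,N^s,2+2s)\le C(s)$ implies $D(N,N^s,p)\le C'(s)$. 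For the endpoint I would then split $f$ into dyadic superlevel sets; summing their contributions gives
\[ \frac{1}{|Q_R|}\int_{Q_R}|f|^{2+2s}\ \lesssim\ \sum_{\substack{\a\ \text{dyadic}\\ 1\le\a\lesssim N}}\a^{2+2s}\,\frac{|U_\a\cap Q_R|}{|Q_R|}\,,\qquad R=N^s, \]
so the task becomes to bound this sum by $C(s)N^{1+s}=C(s)N^{(2+2s)/2}$.

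The plan is then to insert the three cases of Theorem \ref{mainconjexp} (using $\sum_\xi|a_\xi|^2\sim N$) and to exploit the monotonicity they exhibit: when $1<s<2$ the summand $\a^{2+2s}|U_\a\cap Q_R|/|Q_R|$ is $\lesssim N^\e N^3 R^{-1}\a^{2s-2}$ (increasing in $\a$) on the range $\a^2>R$, is $\lesssim N^\e N^3\a^{2s-4}$ (decreasing) on $N\le\a^2\le R$, and is $\le\a^{2+2s}$ (increasing, using just $|U_\a\cap Q_R|\le|Q_R|$) on $\a^2<N$. Thus on each range the sum over the $\sim\log N$ dyadic scales is geometric and loses only an absolute constant rather than a factor $\log N$, and it is dominated by its values at the two extremal scales $\a\sim N^{1/2}$ and $\a\sim N$, where Theorem \ref{mainconjexp} contributes exactly $\lesssim N^\e N^{1+s}$. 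This reasoning already reproves \eqref{small} with the clean constant $C_\e N^\e$ (no pigeonholing $\log$ needed), and it pins the whole remaining difficulty to removing that last $N^\e$.

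That is the main obstacle. The $C_\e N^\e$ in Theorem \ref{mainconjexp} is produced by iterating the high/low frequency decomposition of \cite{gmw} over $\sim\log R$ dyadic scales; at a generic $\a$ the geometric decay above leaves plenty of room to absorb such a loss, but at $\a\sim N^{1/2}$ and $\a\sim N$ the estimates of Theorem \ref{mainconjexp} are sharp, so any super-constant factor lost there survives into $D(N,N^s,p)$. What one needs, therefore, is a loss-free superlevel set estimate in small windows around these two scales. At $\a\sim N^{1/2}$ --- the square-root-cancellation regime, where at the exact scale Chebyshev and $L^2$ orthogonality already give the sharp bound $|U_{N^{1/2}}\cap Q_R|\lesssim|Q_R|$ with no loss --- the hope is that the high/low recursion can be stopped after $O(1)$ steps, or that $L^2$ orthogonality can be used directly once the dominant frequency-localised piece of $f$ has been isolated. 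At $\a\sim N$ --- the ``major arc'' regime, where $|f|$ is within a constant of its maximum $\lesssim N$ on a very thin set --- one would need to prove $|\{(x,t)\in Q_R:|f|\gtrsim N\}|\lesssim N^{-2}R\sum_\xi|a_\xi|^2$ with no logarithm, for an arbitrary $\gtrsim 1/N$-separated $\Xi$ and arbitrary $|a_\xi|\le 1$; this should amount to a clean, iteration-free count of the wave packets of $f$ near its peaks. Making that bookkeeping lossless is, I expect, the hard part. One final subtlety is the bottom endpoint $s=1$, $p=4$, where the range $\a^2>R$ degenerates --- the summand there is constant in $\a$ and contributes an extra $\log N$ --- so that single case would require a separate argument or a restriction of the conjecture to $1<s<2$.
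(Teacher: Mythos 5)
This statement is a \emph{conjecture} in the paper, not a theorem: the paper offers no proof of it, only the remark that Theorem \ref{mainconjexp} shows the superlevel set bounds \eqref{smalllvlset} are non-sharp for $1\le s<2$ except when $\a\sim N$ or $\a^2\sim N$, which ``leads to'' the conjecture. So there is no proof to compare against, and you are right not to claim one. Your dyadic computation makes the paper's heuristic precise and correct: with $\sum_\xi|a_\xi|^2\sim N$ and $R=N^s$, the summand $\a^{2+2s}|U_\a\cap Q_R|/|Q_R|$ is $\lesssim N^\e N^3R^{-1}\a^{2s-2}$ for $\a^2>R$, $\lesssim N^\e N^3\a^{2s-4}$ for $N\le\a^2\le R$, and $\le\a^{2+2s}$ for $\a^2<N$, so for $1<s<2$ each regime sums geometrically to $\lesssim N^\e N^{1+s}$ with peaks exactly at $\a\sim N^{1/2}$ and $\a\sim N$. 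You also correctly isolate the genuine obstruction (the iterative $C_\e R^\e$ loss in the high/low recursion at those two scales) and correctly flag the degenerate endpoint $s=1$, where the $\a^2>R$ regime loses a $\log N$ because $2s-2=0$. In short, your ``proposal'' is a faithful reconstruction of why the authors \emph{conjecture} rather than \emph{prove} the bound, and the gap you name — a loss-free level-set estimate near $\a\sim N^{1/2}$ and $\a\sim N$, plus a separate treatment at $s=1$ — is precisely the open problem.
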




A more refined version of Theorem \ref{mainconjexp} leads to the following essentially sharp $(\ell^q,L^p)$ small cap decoupling theorem, stated here for general exponential sums. 
\begin{corollary}
  Let $\frac{3}{p}+\frac{1}{q}\leq 1,$ and let $R\in [N,N^2].$ Then for each $\e>0$, there exists $C_\e<\infty$ so that
  \[\|\sum_{\xi\in\Xi} a_\xi e((x,t)\cdot(\xi,\xi^2))\|_{L^p(B_R)} \leq C_\e N^\e (N^{1-\frac{1}{p}-\frac{1}{q}}R^{\frac{1}{p}}+N^{\frac{1}{2}-\frac{1}{q}}R^{\frac{2}{p}})(\sum_{\xi}|a_\xi|^q)^{1/q}. \]

\end{corollary}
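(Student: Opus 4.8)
The plan is to deduce the corollary from the superlevel set estimates of Theorem~\ref{mainconjexp} — and the refined versions proved in \textsection\ref{techsec} — via the distribution-function (layer-cake) method, after reducing to coefficients of essentially constant modulus. Since both sides of the claimed inequality are homogeneous of degree one in $(a_\xi)$, I may first normalize $\sup_\xi|a_\xi|=1$, so that $|a_\xi|\le 1$ and Theorem~\ref{mainconjexp} applies. Decomposing $\Xi=\bigsqcup_\lambda\Xi_\lambda$ dyadically according to $|a_\xi|\sim\lambda$ (keeping only the $O(\log N)$ scales with $\lambda\gtrsim N^{-C}$; the rest contributes negligibly) and letting $f_\lambda$ be the corresponding partial sum, one has $\|f\|_{L^p(B_R)}\le\sum_\lambda\|f_\lambda\|_{L^p(B_R)}$, while $\sum_\lambda\lambda(\#\Xi_\lambda)^{1/q}\sim\sum_\lambda\bigl(\sum_{\xi\in\Xi_\lambda}|a_\xi|^q\bigr)^{1/q}\lesssim(\log N)\bigl(\sum_\xi|a_\xi|^q\bigr)^{1/q}$, so it suffices to prove the estimate for a single $f_\lambda$. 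In other words we may assume $|a_\xi|\sim 1$ on a set $\Xi$ of cardinality $M$, in which case $\bigl(\sum_\xi|a_\xi|^q\bigr)^{1/q}\sim M^{1/q}$ and $\sum_\xi|a_\xi|^2\sim M$.

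For such an $f$ I would write $\int_{B_R}|f|^p=p\int_0^\infty\alpha^{p-1}|U_\alpha\cap B_R|\,d\alpha$ (the cube $Q_R$ of Theorem~\ref{mainconjexp} and the ball $B_R$ differ only by harmless dilations) and bound $|U_\alpha\cap B_R|$ on the ranges cut out by the breakpoints $\alpha\sim M^{1/2},\ N^{1/2},\ R^{1/2}$ and by the cutoff $\alpha\sim M$, whose relative order depends on the parameters. For $\alpha\gtrsim\sum_\xi|a_\xi|\sim M$ the set $U_\alpha$ is empty, and $|U_\alpha\cap B_R|\le R^2$ always. For $N\lesssim\alpha^2\lesssim R$ and for $R\lesssim\alpha^2$ Theorem~\ref{mainconjexp} gives $|U_\alpha\cap B_R|\lesssim N^\e N^2R^2\alpha^{-6}M$ and $\lesssim N^\e N^2R\alpha^{-4}M$ respectively. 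For $\alpha^2\lesssim N$ the bound $R^2$ is wasteful when $\Xi$ occupies only a few $\tfrac1N$-caps; here I would instead use the refined superlevel set estimate of \textsection\ref{techsec}, which is sensitive to the number of caps actually carrying mass (equivalently, it replaces the ambient scale $N$ by the scale at which $\Xi$ genuinely lives), supplemented by the elementary $L^2$ bound $|U_\alpha\cap B_R|\lesssim\alpha^{-2}R^2M$ for $\alpha\gtrsim M^{1/2}$ and the trivial bound $R^2$ for $\alpha\lesssim M^{1/2}$. Feeding these into the layer-cake integral and carrying out the elementary $\alpha$-integrations (with sub-cases according to the signs of $p-4$, $p-6$ and the orderings among $M,\sqrt N,\sqrt R$), every term is $\lesssim N^\e M^{p/q}\bigl(N^{p-1-p/q}R+N^{p/2-p/q}R^2\bigr)$. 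The hypothesis $\tfrac3p+\tfrac1q\le1$, i.e.\ $p/q\le p-3$, is exactly what is needed to absorb the intermediate contributions into these two main terms — it is used repeatedly, for instance to make the exponent $p-3-p/q$ nonnegative and, when $q<2$ (which forces $p>6$), to control the range $\alpha^2\lesssim N$. Taking $p$-th roots and recalling $\bigl(\sum_\xi|a_\xi|^q\bigr)^{1/q}\sim M^{1/q}$ yields the bound for $f_\lambda$, hence the corollary.

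The main obstacle is the regime $\alpha^2\lesssim N$. There Theorem~\ref{mainconjexp} supplies only the trivial bound $|B_R|$, and even the $L^2$ bound $\alpha^{-2}R^2M$ is too lossy: for example, for $(p,q)=(6,2)$ with $\Xi$ a short arithmetic progression — so that $f$ is a dilate of a small-cap exponential sum at a scale $\ll N$ — one checks that both of these overshoot the target $M^{p/q}\bigl(N^{p-1-p/q}R+N^{p/2-p/q}R^2\bigr)$. What is needed is a sharp level-set estimate adapted to the true separation scale of $\Xi$ — equivalently, the high/low-frequency argument of \cite{gmw} run at the relevant intermediate scale — which is precisely the refined form of Theorem~\ref{mainconjexp} from \textsection\ref{techsec}; this is also the step at which the exponent condition $\tfrac3p+\tfrac1q\le1$ is seen to be sharp. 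Granting that input, the remaining work — the $\ell^q$ bookkeeping through the dyadic pigeonholing and the case analysis in the $\alpha$-integral — is routine.
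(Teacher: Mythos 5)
Your plan is essentially the paper's: reduce by pigeonholing to coefficients of comparable size (equivalently, to $\|f_\gamma\|_\infty\lesssim 1$ and $\|f_\gamma\|_p^p$ essentially constant), and then bound $\alpha^p|U_\alpha|$ via the refined superlevel set estimate — Theorem~\ref{main} with the distribution function $\lambda(\cdot)$, not just Theorem~\ref{mainconjexp} — since the latter, tied to the ambient scale $N$, is indeed insufficient when $\alpha^2\lesssim N$ and $\Xi$ is sparse. You have correctly identified that this refinement is the crucial input and that $\tfrac{3}{p}+\tfrac{1}{q}\le 1$, i.e.\ $p-3-\tfrac{p}{q}\ge 0$, is the exponent relation that closes the estimate after inserting $\lambda(1)\lesssim R^\beta$ and $\max_s\lambda(s^{-1}R^{-1})\lambda(s)\lesssim R^{2\beta-1}$. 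The one point worth flagging is that the concluding claim that ``the remaining work\ldots is routine'' understates where the actual content of the paper's proof lies: the case analysis is nontrivial, and the paper organizes it by first establishing the critical window $4\le p\le 6$ (splitting on which of the two bounds in Theorem~\ref{main} applies, plus a trivial $L^2$-orthogonality bound when $\alpha^2\le\lambda(1)$) and then \emph{bootstrapping} the ranges $3\le p<4$ from $p=4$ and $p>6$ from $p=6$ by writing $\alpha^p=\alpha^{p-4}\alpha^4$ or $\alpha^p=\alpha^{p-6}\alpha^6$ and using $\alpha\gtrsim R^{\beta/2}$ or $\alpha\le\lambda(1)$. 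Your direct layer-cake integration in $\alpha$, with sub-cases by the signs of $p-4$, $p-6$ and the orderings of $M,\sqrt N,\sqrt R$, also succeeds, but the bootstrapping device is what makes the paper's computation manageable, and it is worth noting that the $q<2$ range (which forces $p>6$) is exactly where the naive small-$\alpha$ bounds look threatening until one uses $p-1-\tfrac{p}{q}\ge 2$ to absorb the $R^2$ into the $N^{p-1-p/q}R$ term.
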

In the above corollary, the assumptions are that $\Xi$ is a $\gtrsim\frac{1}{N}$-separated subset of $[-1,1]$ and that $a_\xi\in\C$.

\section{Main results  \label{techsec}}

We state our main results in the more general set-up for decoupling. Let $\P^1$ denote the truncated parabola 
\[ \{(t,t^2):|t|\le 1\} \]
and write $\mc{N}_{R^{-1}}(\P^1)$ for the $R^{-1}$-neighborhood of $\P^1$ in $\R^2$, where $R\ge 2$. For a partition $\{\g\}$ of $\mc{N}_{R^{-1}}(\P^1)$ into almost rectangular blocks, an $(\ell^2,L^p)$ decoupling inequality is
\begin{equation}\label{dec} \|f\|_{L^p(B_R)}\le D(R,p) (\sum_\g\|f_\g\|_{L^p(\R^2)}^2)^{1/2}\end{equation}
in which $f:\R^2\to\C$ is a Schwartz function with $\supp \widehat{f}\subset\mc{N}_{R^{-1}}(\P^1)$ and $f_\g$ means the Fourier projection onto $\g$, defined precisely below. When we refer to canonical caps or to canonical decoupling, we mean that $\g$ are approximately $R^{-1/2}\times R^{-1}$ blocks corresponding to the $\ell^2$-decoupling paper of \cite{BD}. 
In this paper, we allow $\g$ to be approximate $R^{-\b}\times R^{-1}$ blocks, where $\frac{1}{2}\le\b\le 1$. This is the ``small cap" regime studied in \cite{smallcap}. We also consider $(\ell^q,L^p)$ decoupling for small caps, which replaces $(\sum_\g\|f_\g\|_p^2)^{1/2}$ by $(\sum_\g\|f_\g\|_p^q)^{1/q}$ in the decoupling inequality above (see Corollary \ref{lqLp}).

To precisely discuss the collection $\{\g\}$, fix a $\b\in[\frac{1}{2},1]$. Let $\mc{P}=\mc{P}(R,\b)=\{\g\}$ be the partition of $\mc{N}_{R^{-1}}(\P^1)$ given by 
\begin{equation}\label{blocks} \bigsqcup_{|k|\le \lceil R^\b\rceil-2} \{(x,t)\in\mc{N}_{R^{-1}}(\P^1):k\lceil R^\b\rceil^{-1}\le x<(k+1)\lceil R^\b\rceil^{-1} \}  \end{equation}
and the two end pieces
\[  \{(x,t)\in\mc{N}_{R^{-1}}(\P^1):x<-1+\lceil R^\b\rceil^{-1}\} \sqcup  \{(x,t)\in\mc{N}_{R^{-1}}(\P^1):1-\lceil R^\b\rceil^{-1}\le x\}. \]
For a Schwartz function $f:\R^2\to\C$ with $\supp\widehat{f}\subset\mc{N}_{R^{-1}}(\P^1)$, define for each $\g\in\mc{P}(R,\b)$
\[ f_\g(x):=\int_\g\widehat{f}(\xi)e^{2\pi ix\cdot\xi}d\xi. \]

For $a,b>0$, the notation $a\lesssim b$ means that $a\le Cb$ where $C>0$ is a universal constant whose definition varies from line to line, but which only depends on fixed parameters of the problem. Also, $a\sim b$ means $C^{-1}b\le a\le Cb$ for a universal constant $C$. 

Let $U_\a:= \{x\in \R^2 :|f(x)|\ge \a\}$. In Section 5 of \cite{gmw}, through a wave packet decomposition and series of pigeonholing steps, bounds for $D(R,p)$ in \eqref{dec} follow (with an additional power of $(\log R)$) from bounds on the constant $C(R,p)$ in
\[ \a^p|U_\a|\le C(R,p)(\#\{\g:f_\g\not=0\})^{\frac{p}{2}-1}\sum_\g\|f_\g\|_2^2 \]
for any $\a>0$ and under the additional assumptions that $\|f_\g\|_\infty\lesssim 1$, $\|f_\g\|_p^p\sim\|f_\g\|_2^2$ for each $\g$. Thus decoupling bounds follow from upper bounds on the superlevel set $|U_\a|$. In this paper, we consider the question: given $\a>0$ and a partition $\{\g\}$, how large can $|U_\a|$ be, varying over functions $f$ satisfying $\|f_\g\|_\infty\lesssim 1$ for each $\g$? We answer this question in the following theorem.

\begin{theorem} \label{mainconj}Let $\b\in[\frac{1}{2},1]$, $R\ge2$. Let $f:\R^2\to\C$ be a Schwartz function with Fourier transform supported in $\mc{N}_{R^{-1}}(\P^1)$ satisfying $\|f_\g\|_\infty\le 1$ for all $\g\in\mc{P}(R,\b)$. Then for any $\a>0$,
\[ |U_\a\cap[-R,R]^2|\le C_\e R^\e\begin{cases} \frac{R^{2\b-1}}{\a^4}\sum\limits_\g\|f_\g\|_{L^2(\R^2)}^2\quad&\text{if}\quad \a^2>R\\
\frac{R^{2\b}}{\a^6}\sum\limits_\g\|f_\g\|_{L^2(\R^2)}^2\quad&\text{if}\quad R^\b\le \a^2\le R\\
R^2 \quad&\text{if}\quad \a^2<R^\b. 
\end{cases}\]

\end{theorem}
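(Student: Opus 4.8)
The plan is to run the high/low frequency argument of \cite{gmw} at the level of superlevel sets, inducting on the scale $R$. First I would set up a wave packet decomposition of $f$ at scale $R$: for each cap $\g\in\mc{P}(R,\b)$ write $f_\g=\sum_{T\in\mathbb{T}_\g}f_T$ where $T$ ranges over a tiling of $[-R,R]^2$ by dual rectangles of dimensions roughly $R^\b\times R$ oriented according to $\g$, and $\|f_T\|_\infty\lesssim 1$ essentially on $T$. Next, fix a dyadic intermediate scale $\rho$ with $1\le\rho\le R$ (the natural choice will be $\rho\sim\a^2$ in the intermediate regime) and group the caps $\g$ into fat caps $\tau$ of width $\rho^{-1/2}$ — these are essentially the canonical caps at scale $\rho$. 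Form the low-frequency function $f_{\le\rho}$ by convolving $|f_\tau|^2$ (summed over $\tau$) with a bump adapted to a ball of radius $\rho$ in frequency, and the high part by subtraction, so that pointwise $|f|^2\lesssim |f_{\le\rho}| + |f_{\text{high}}|$ up to constants, and on the superlevel set $U_\a$ at least one term dominates $\a^2$.

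On the low set $\{|f_{\le\rho}|\gtrsim\a^2\}$, the key point is that $f_{\le\rho}$ is a sum over fat caps $\tau$ of nonnegative locally-constant-at-scale-$\rho$ functions, so I would apply the flat/trivial $L^2$ bound together with the canonical $\ell^2$ decoupling of Bourgain--Demeter \cite{BD} at the coarse scale $\rho$ (equivalently, its known superlevel set consequence), controlling $|\{|f_{\le\rho}|\gtrsim\a^2\}|$ by $\a^{-4}$ times a quantity involving $\sum_\tau\|f_\tau\|_2^2\lesssim\sum_\g\|f_\g\|_2^2$ and the number of fat caps $\sim\rho^{1/2}$. On the high set $\{|f_{\text{high}}|\gtrsim\a^2\}$, the Fourier support of each $|f_\tau|^2$ minus its low part lives in an annulus at frequency $\sim\rho^{-1}$ away from the origin, and the geometry of the parabola forces these annular pieces, indexed by $\tau$, to be essentially disjoint — this is the one place where curvature enters. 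That orthogonality gives an $L^2$ (or $L^4$ via biorthogonality, as in \cite{gmw}) bound on $f_{\text{high}}$ localized to balls of radius $\rho$, which I then feed into the induction hypothesis applied on each such ball: inside a $\rho$-ball the relevant small-cap partition rescales to $\mc{P}(\rho,\b)$ with the same $\b$, and summing the local estimates over a tiling of $[-R,R]^2$ by $\rho$-balls, combined with the global $L^2$ mass bound $\sum_\g\|f_\g\|_2^2\lesssim R^\b\cdot R$ from $\|f_\g\|_\infty\le 1$, closes the recursion with the claimed exponents $2\b-1$, $2\b$, and the trivial bound $R^2$ handling $\a^2<R^\b$ directly.

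The case division in the theorem corresponds exactly to how the optimization in $\rho$ plays out: when $\a^2<R^\b$ one takes the trivial bound $|U_\a\cap[-R,R]^2|\le(2R)^2$; when $R^\b\le\a^2\le R$ the balance point is $\rho\sim\a^2$ and one iterates a bounded (in the $R^\e$ sense) number of times; when $\a^2>R$ one can only push $\rho$ up to $R$ in one step and the single application of high/low plus decoupling already produces $R^{2\b-1}\a^{-4}\sum_\g\|f_\g\|_2^2$. I would organize the proof so that a single inductive statement with a free parameter handles all three ranges, tracking the $R^\e$ losses so they sum geometrically over the $O(\log R)$ iterations.

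The main obstacle I anticipate is the high-frequency step: making precise the claim that the annular Fourier supports of the pieces of $|f_\tau|^2$ are genuinely disjoint (not merely finitely overlapping in a way that degrades over many iterations), and then correctly transferring the resulting $L^p$ bound to the induction hypothesis at scale $\rho$ with the correct normalization $\|f_{\tau,\text{high}}\|_\infty$ or rather with the correct replacement for the hypothesis $\|f_\g\|_\infty\le 1$ after rescaling. Controlling the interplay between the small-cap width $R^{-\b}$ and the coarse scale $\rho$ — in particular ensuring $\rho^{-1/2}\ge R^{-\b}$ so that fat caps genuinely coarsen the partition, and handling the boundary case $\b=1$ where small caps are as narrow as the $R^{-1}$-neighborhood itself — is where the bookkeeping will be most delicate, and is presumably why the authors work with the general $\b$-dependent partition $\mc{P}(R,\b)$ rather than just canonical caps throughout.
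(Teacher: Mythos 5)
Your proposal correctly identifies the rough skeleton — a high/low decomposition of a square function associated to canonical caps, with curvature entering via Fourier-support overlap in the high step and a trivial/$L^\infty$ bound in the low step — but it has three gaps that would prevent the argument from closing, and the structure you describe is different from the paper's in a way that matters.

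\textbf{Missing bilinear reduction.} You write that ``pointwise $|f|^2\lesssim|f_{\le\rho}|+|f_{\text{high}}|$,'' but there is no such pointwise bound relating $|f|^2$ to the square function $\sum_\tau|f_\tau|^2$: the terms $f_\tau$ can interfere destructively. The only way to reach a square function from $|f|$ at a superlevel set is through a bilinear (C\'ordoba-type) restriction estimate, which itself must be preceded by a broad--narrow decomposition to isolate transverse pairs $\tau,\tau'$. The paper proves the broad estimate as Proposition~\ref{mainprop} for the bilinear set $\text{Br}_\a(\tau,\tau')$ and then runs a full broad--narrow recursion with parabolic rescaling to pass to $U_\a$. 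This is the central mechanism connecting $U_\a$ to the square function, and your sketch omits it entirely.

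\textbf{Missing pruning.} After the bilinear reduction, the multi-scale high/low argument requires $L^\infty$ control of wave packets at each intermediate scale, because the trivial $L^\infty$ bound on a coarse cap degrades with each iteration. The paper handles this by a pruning process (Definition~\ref{taukprune}, Lemma~\ref{ftofk}): at each scale $R_k$ one discards wave packets with amplitude exceeding $R^{M\d}\lambda(1)/\a$ and shows (using that $\Omega_k$ is a ``low-dominated below scale $k$'' set) that the discarded part is $\lesssim R^{-M\d}\a$, hence negligible on the superlevel set. Without this, the low-set bound you propose (via ``flat/trivial $L^2$'') cannot be made to produce the factor $\lambda(1)^2/\a^6$, and more importantly the high lemmas (Lemmas~\ref{high},~\ref{high2}) would not yield the right powers of $\a$ because they are applied to the pruned functions $f_{\tau_k}^{k+1}$ whose $L^\infty$ norms are controlled.

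\textbf{The ``feed into the induction hypothesis'' step is structurally incorrect.} The object $f_{\text{high}}$ whose superlevel set you bound is the high-frequency piece of the nonnegative square function $\sum_\tau|f_\tau|^2*w_\tau$, not a function with Fourier support in a neighborhood of a parabola to which Theorem~\ref{mainconj} could be applied at a smaller scale. The paper does not induct on $R$: it fixes $\sim\e^{-1}$ scales $R_k=R^{k\e}$ up front, decomposes $B_R$ into $H$, the intermediate sets $\Omega_k$, and $L$, and bounds each piece directly using the high lemmas (with the overlap of $\{2\tau_k-2\tau_k\}$ controlled by the normalized distribution function $\lambda(\cdot)$, not exact disjointness). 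Parabolic rescaling appears only in the broad--narrow recursion, applied to the \emph{unpruned} $f_\tau$ with a carefully rescaled distribution function, not to a high-frequency square-function residue.

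Your case analysis for the three ranges of $\a$, and the trivial bound when $\a^2<R^\b$, are correct as stated, but these are the easy endpoints; the substance lies in the three ingredients above.
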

Each bound in Theorem \ref{mainconj} is sharp, up to the $C_\e R^\e$ factor, which we show in \textsection\ref{sharp}.

Define notation for a distribution function for the Fourier support of a Schwartz function $f$ with Fourier transform supported in $\mc{N}_{R^{-1}}(\P^1)$ as follows. For each $0\le s\le 2$, let 
\[ \lambda(s)=\sup_{\w(s)}\#\{\g:\g\cap \w(s)\not=\emptyset,\,\,f_\g\not=0\}\]
where $\w(s)$ is any arc of $\P^1$ with projection onto the $\xi_1$-axis equal to an interval of length $s$. The following theorem implies Theorem \ref{mainconj} and replaces factors of $R^\b$ in the upper bounds from Theorem \ref{mainconj} by expressions involving $\lambda(\cdot)$, which see the actual Fourier support of the input function $f$. 

\begin{theorem}\label{main} Let $\b\in[\frac{1}{2},1]$, $R\ge2$. For any $f$ with Fourier transform supported in $\mc{N}_{R^{-1}}(\P^1)$ satisfying $\|f_\g\|_\infty\lesssim 1$ for each $\g\in\mc{P}(R,\b)$,
\[ |U_\a|\le C_\e R^\e \begin{cases} \frac{1}{\a^4}\underset{s}{\max}\lambda(s^{-1}R^{-1})\lambda(s)\sum_\g\|f_\g\|_2^2\quad&\text{if}\quad \a^2>\frac{\lambda(1)^2}{\max_s \lambda(s^{-1}R^{-1})\lambda(s)}\\
\frac{\lambda(1)^2}{\a^6}\sum_\g\|f_\g\|_2^2\quad&\text{if}\quad  \a^2\le \frac{\lambda(1)^2}{\max_{s}\lambda(s^{-1}R^{-1})\lambda(s)} 
\end{cases}\]
in which the maxima are taken over dyadic $s$, $R^{-\b}\le s\le R^{-1/2}$. 
\end{theorem}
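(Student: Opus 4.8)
The plan is to run the high/low frequency argument of \cite{gmw}, tracking the finer Fourier-support data encoded by $\lambda(\cdot)$ rather than the crude count $\sim R^\b$. The starting point is the standard square-function/bilinear setup: decompose $f=\sum_\g f_\g$ into wave packets, and write $|f|^2$ in terms of its low-frequency and high-frequency parts relative to a family of intermediate scales $s$, $R^{-\b}\le s\le R^{-1/2}$. At each scale $s$ one groups the caps $\g$ into arcs $\tau=\tau(s)$ of the parabola whose $\xi_1$-projection has length $s$; the low part of $|f_\tau|^2$ is essentially constant on dual rectangles, while the high part is governed by a square function $\sum_\tau |f_\tau|^2$ with the caps at the next scale. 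The key structural inputs are: (i) the local $L^2$-orthogonality $\int_{B}|f|^2 \lesssim \sum_\g \int_{w_\g}|f_\g|^2$ on balls $B$ of radius comparable to the relevant dual scale, and (ii) the geometric fact that the number of $\g$'s meeting a fixed arc $w(s)$ is at most $\lambda(s)$, so that after pigeonholing to a single popular arc the effective cap-count at scale $s$ is $\lambda(s)$ and the effective cap-count at the bottom scale $R^{-1}$ inside it is $\lambda(s^{-1}R^{-1})$ — this is precisely where the products $\lambda(s^{-1}R^{-1})\lambda(s)$ enter.

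The heart of the argument is a broad/narrow (or equivalently high/low) dichotomy run at every scale simultaneously, as in \cite{gmw}: either $|f(x)|$ is dominated by the low-frequency average of $|f|^2$ at some scale $s$ — in which case a Cordoba-type $L^4$ (or local $L^2$) estimate at that scale combined with $\|f_\g\|_\infty\lesssim1$ gives the bound with the factor $\frac{1}{\alpha^4}\lambda(s^{-1}R^{-1})\lambda(s)\sum_\g\|f_\g\|_2^2$ — or $|f(x)|$ is dominated by the high-frequency square function at all scales, in which case iterating the gain down the $\log R$ many dyadic scales and using $\ell^2L^2$ orthogonality produces the $\frac{\lambda(1)^2}{\alpha^6}\sum_\g\|f_\g\|_2^2$ bound. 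The threshold $\alpha^2 \sim \lambda(1)^2/\max_s\lambda(s^{-1}R^{-1})\lambda(s)$ is exactly the place where the two competing bounds cross, which is why the statement is phrased as a case split at that value of $\alpha$. The factor $R^\e$ absorbs the $\log R$ losses from the number of scales and from the dyadic pigeonholing in $\alpha$, $\|f_\g\|_2$, and the choice of popular arc.

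Concretely, the steps I would carry out, in order: (1) reduce to $f$ with all $f_\g$ of comparable $L^2$ norm and comparable (essentially constant) amplitude by dyadic pigeonholing, at the cost of $R^\e$; (2) set up the scales $s$ and the associated parabola-arc decompositions, and record the local $L^2$-orthogonality and the reverse square-function estimate at each scale (this is where flat/parabolic geometry is used — two caps at the same scale $s$ have wave packets transverse enough to be almost orthogonal on the appropriate ball); (3) prove the high/low lemma: at a point $x\in U_\alpha$, either $\alpha^2 \lesssim$ (low part of $|f|^2$ at some scale $s$) or $\alpha^2\lesssim$ (high part, i.e.\ the square function at scale $s$) for that same $s$; (4) estimate the "low" contribution by integrating the pointwise bound (low part $\lesssim \lambda(s^{-1}R^{-1})$ locally) against the measure of $U_\alpha$ and closing with local $L^2$; (5) estimate the "high" contribution by iterating: the measure where the scale-$s$ square function is large is controlled by the measure where the scale-$(s/\text{next})$ square function is large times a $\lambda$-ratio, telescoping from $s\sim 1$ down to $s\sim R^{-1}$ and paying a total of $\lambda(1)^2$; (6) combine the two regimes and optimize over $s$ to get the stated dichotomy.

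The main obstacle I anticipate is step (5), the iteration of the high-frequency estimate: one must keep the square-function losses multiplicative and $\lambda$-dependent rather than letting them degrade to the trivial $R^\b$ bound at each of the $\log R$ stages, and one must verify that the telescoping product of local $\lambda$-ratios collapses to exactly $\lambda(1)^2$ (using the submultiplicativity-type inequality $\lambda(s_1)\lambda(s_1^{-1}s_2)\gtrsim \lambda(s_2)$ that follows from the definition of $\lambda$ as a sup over arcs). A secondary technical point is making the high/low decomposition (step 3) genuinely pointwise and uniform in scale — handled, as in \cite{gmw}, by choosing the cutoffs to be nested and using a smooth partition adapted to the annuli between consecutive scales — but the real bookkeeping difficulty is ensuring that the error terms from these cutoffs (rapidly decaying tails) are harmless after integrating over $[-R,R]^2$, which is routine but must be done carefully to land the clean exponents $4$ and $6$.
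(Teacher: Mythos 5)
Your overall framework (multi-scale high/low decomposition of the square function, wave packets, a C\'ordoba-type bilinear $L^4$ mechanism, tracking $\lambda(\cdot)$) is the right one, but the central dichotomy is inverted, and this is not a cosmetic issue. In the actual argument the \emph{high}-dominated set is where the $L^4$ bound $\frac{1}{\a^4}\max_s\lambda(s^{-1}R^{-1})\lambda(s)\sum_\g\|f_\g\|_2^2$ is produced: bilinear restriction gives $\a^4|U_\a\cap H|\lesssim\int|G^h|^2$ with $G=\sum_\theta|f_\theta|^2*w_\theta$, and the factor $\lambda(s^{-1}R^{-1})\lambda(s)$ comes from the bounded overlap of the difference sets $\theta-\theta$ on the frequency annulus $|\xi|\sim s$ together with $\|\sum_{\theta_s\subset\theta}|f_{\theta_s}|^2\|_\infty\lesssim\lambda(R^{-1/2})\lambda(s)$. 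The \emph{low}-dominated set is where the $\frac{\lambda(1)^2}{\a^6}$ bound appears, via the pointwise estimate $|G^\ell|\lesssim\lambda(1)$. Your proposal assigns the $\a^{-4}$ bound to the low regime and the $\a^{-6}$ bound to the high regime. The low regime cannot yield the constant $\lambda(s^{-1}R^{-1})\lambda(s)$: once the square function is low-dominated it is essentially constant on large dual boxes and the only available $L^\infty$ input is $\lambda(1)$, so squaring it gives at best $\frac{\lambda(1)}{\a^4}\sum_\g\|f_\g\|_2^2$, which is strictly weaker than the claimed bound (e.g.\ $\lambda(1)\sim R^\b$ versus $\max_s\lambda(s^{-1}R^{-1})\lambda(s)\sim R^{2\b-1}$ in the uniform case).

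Two further gaps. First, you never introduce the amplitude pruning of wave packets at the threshold $\sim R^{M\d}\lambda(1)/\a$. This pruning is what lets one replace $f_\tau$ by $f_\tau^{k+1}$ on the intermediate sets $\Omega_k$ and on $L$, and it is the only place the factor $\lambda(1)^2$ actually comes from: on $\Omega_k$ one bounds $\int|g_k^h|^2\lesssim\sum\int|f^{k+1}_{\tau_{k+1}}|^4\lesssim(\lambda(1)/\a)^2\sum\int|f^{k+1}_{\tau_{k+1}}|^2$ using the pruned $L^\infty$ bound, and on $L$ one uses $|f^1_\tau f^1_{\tau'}|\lesssim G^\ell\lesssim\lambda(1)$ twice. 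Your proposed mechanism --- a telescoping product of local $\lambda$-ratios over the $\log R$ scales collapsing to $\lambda(1)^2$ via a submultiplicativity inequality --- does not correspond to any step that closes; there is no iteration of superlevel-set measures of square functions across scales in this argument. Second, you conflate the broad/narrow reduction with the high/low decomposition: these are separate, and both are needed (broad/narrow plus parabolic rescaling to justify the bilinear C\'ordoba estimate with only an $R^{O(\d)}$ loss, and then high/low on the square function of the bilinearized quantity). As written, the proposal would not assemble into a proof without reorganizing which regime carries which estimate and adding the pruning step.
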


\begin{cor}[$(l^q,L^p)$ small cap decoupling] \label{lqLp}Let 
$\frac{3}{p}+\frac{1}{q}\le 1$. Then 
\[ \|f\|_{L^p(B_R)}\le C_\e R^\e (R^{\b (1-\frac{1}{q})-\frac{1}{p}(1+\b)}+R^{\b (\frac{1}{2}-\frac{1}{q})})(\sum_\g\|f_\g\|_{L^p(\R^2)}^q)^{1/q}\]
whenever $f$ is a Schwartz function with Fourier transform supported in $\mc{N}_{R^{-1}}(\P^1)$. 
\end{cor}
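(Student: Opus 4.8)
The plan is to derive the $(\ell^q, L^p)$ decoupling inequality from the sharp superlevel set estimates of Theorem \ref{main} (or equivalently Theorem \ref{mainconj}) by a standard pigeonholing-and-interpolation argument. First I would normalize: by scaling we may assume $\|f_\g\|_\infty \le 1$ for all $\g$ up to extracting the sup from the $\ell^q$ norm on the right side, and by a further pigeonholing step (losing only $R^\e$, or really $(\log R)^{O(1)}$) we may assume that the nonzero pieces $f_\g$ all have comparable $L^2$ norms and comparable supports, and that we are working at a single dyadic superlevel $\a$, writing $\|f\|_{L^p(B_R)}^p \lesssim R^\e \, \a^p |U_\a \cap B_R|$. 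Since $\|f_\g\|_\infty \le 1$ and $\supp\widehat{f_\g}$ is an $R^{-\b}\times R^{-1}$ block, we have $\|f_\g\|_{L^2(\R^2)}^2 \sim \|f_\g\|_{L^p(\R^2)}^p$ when the $f_\g$ are ``flat'' on their dual boxes (the worst case), so the two sides of Theorem \ref{mainconj} can be rewritten in terms of $\sum_\g \|f_\g\|_{L^p(\R^2)}^p$; combined with $(\sum_\g\|f_\g\|_p^p) \le (\#\{\g: f_\g\ne 0\})^{1-p/q}(\sum_\g \|f_\g\|_p^q)^{p/q}$ and the trivial bound $\#\{\g\}\lesssim R^\b$, this converts the $\ell^2$-type right-hand side of Theorem \ref{mainconj} into the desired $\ell^q$-type expression.

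The key computation is then to plug each of the three regimes of Theorem \ref{mainconj} into $\a^p |U_\a \cap B_R|$ and check that, under the hypothesis $\frac{3}{p}+\frac{1}{q}\le 1$, the resulting exponent of $\a$ is nonpositive in the large-$\a$ regimes and nonnegative in the small-$\a$ regime, so that the worst case occurs at the threshold $\a^2 \sim R$ (for the first two cases) or $\a^2 \sim R^\b$ (at the boundary with the third case). Specifically: in the regime $\a^2 > R$, $\a^p|U_\a| \lesssim R^\e \a^{p-4} R^{2\b-1}\sum_\g\|f_\g\|_2^2$, and $p - 4 \le 0$ exactly when $p \le 4$, which is implied by $\frac3p + \frac1q \le 1$ together with $q \ge 2$... here one must be slightly careful and instead observe that the constraint $\frac3p+\frac1q\le 1$ is precisely what makes the maximum over $\a$ of the full expression (after the $\ell^q$ conversion, which itself contributes a power of $\a$ through $\#\{\g\}$ being constrained by $\a$-dependent counting) land on the two claimed terms $R^{\b(1-1/q)-\frac1p(1+\b)}$ and $R^{\b(1/2-1/q)}$. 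The first term comes from the regime $R^\b\le\a^2\le R$ evaluated at $\a^2\sim R^\b$ together with the $\ell^2\to\ell^q$ loss, and the second from $\a^2\sim R$; in the regime $\a^2 < R^\b$ one uses $|U_\a\cap B_R|\le R^2$ directly, and $\a^p R^2 \le R^{\b p/2} R^2$ is dominated by the second term raised to the $p$-th power since $R^{\b(1/2-1/q)p} (\sum\|f_\g\|_q^q)^{p/q} \gtrsim R^{\b p/2} R^{-\b p/q}\cdot(\text{one cap})$, and the single-cap $L^p$ norm is $\gtrsim R^{(\b+1)/p}$, wait — one needs the sharp examples to check this is consistent; in practice one simply takes the max of the three contributions and reads off that the hypothesis on $(p,q)$ forces it to be the stated one.

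The main obstacle I anticipate is the bookkeeping of the pigeonholing reductions, in particular ensuring that the reduction to ``all $f_\g$ flat with equal $L^2$ norms'' is legitimate and that the conversion between $\|f_\g\|_2^2$ and $\|f_\g\|_p^p$ does not lose powers of $R$ — this is where the hypothesis $\|f_\g\|_\infty \lesssim 1$ and the product structure $f_\g(x) = e^{2\pi i x\cdot\xi_\g}(\text{bump adapted to the }R^\b\times R^{1}\text{ dual box})$ are used, and one should invoke exactly the reduction from Section 5 of \cite{gmw} cited in the text. The other delicate point is that, strictly speaking, one wants to apply the sharper Theorem \ref{main} rather than Theorem \ref{mainconj}, since the $\lambda(s)$-dependent bounds ``see'' the actual Fourier support and give the sharp $\ell^q$ constant when the input is supported on a sparse subcollection of caps; I would run the argument with Theorem \ref{main}, so that $\lambda(1) = \#\{\g:f_\g\ne0\}$ and $\max_s \lambda(s^{-1}R^{-1})\lambda(s)$ appear, and then use Hölder/counting $\lambda(1)\lesssim R^\b$, $\lambda(s)\lesssim R^\b s$ to reduce to the clean powers of $R$ stated in the corollary. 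The case analysis of which of the two terms dominates is then a short convexity argument in the exponents, and I expect no real difficulty once the normalization is in place.
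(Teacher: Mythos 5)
Your high-level plan---pigeonhole to comparable caps, reduce to a single level set, feed $\a^p|U_\a|$ into the level set theorem, convert $\ell^2\to\ell^q$ using $\lambda(1)\lesssim R^\b$, and take a max over $\a$---is exactly the paper's strategy, and you correctly note that one should run it with Theorem \ref{main} rather than the cruder Theorem \ref{mainconj} so that $\lambda(1)$ and $\max_s\lambda(s^{-1}R^{-1})\lambda(s)$ track the actual Fourier support. But the heart of the proof is the case analysis that determines which power of $R$ emerges for which $(p,q,\a)$, and this is precisely the step you leave unfinished.

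Concretely: you start to argue that in the $\a^2>R$ regime $\a^{p-4}$ is decreasing because $p\le 4$, then correctly notice this is false under $\frac{3}{p}+\frac{1}{q}\le 1$ (which forces $p\ge 3$ but certainly allows $p>4$), and then retreat to ``in practice one simply takes the max of the three contributions and reads off.'' That is not a proof, and the partial claim you do make---that the term $R^{\b(1-\frac1q)-\frac1p(1+\b)}$ comes from $\a^2\sim R^\b$ and the term $R^{\b(\frac12-\frac1q)}$ from $\a^2\sim R$---is backwards: the constructive-interference term $R^{\b(1-\frac1q)-\frac1p(1+\b)}$ is extremal at $\a\sim\lambda(1)\lesssim R^\b$ (the large-$\a$/$L^4$ regime), and the square-root-cancellation term $R^{\b(\frac12-\frac1q)}$ comes from $\a^2\sim\lambda(1)$ (the $L^6$/low regime). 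The paper handles this by first treating the critical window $4\le p\le 6$---in the $L^4$ subcase bounding $\a^{p-4}\le\lambda(1)^{p-4}$, and in the $L^6$ subcase first disposing of $\a^2\le\lambda(1)$ trivially and then using $\a^{p-6}\le\lambda(1)^{(p-6)/2}$---and then separately reduces $3\le p<4$ to the $p=4$ estimate (splitting at $\a=R^{\b/2}$ and using $L^2$-orthogonality below) and $p>6$ to the $p=6$ estimate (using $\a\le\lambda(1)$). None of this bookkeeping by $p$-range appears in your write-up, and it is where the hypothesis $\frac{3}{p}+\frac{1}{q}\le 1$ actually gets used.

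One smaller point: the inequality you write, $\sum_\g\|f_\g\|_p^p\le(\#\{\g:f_\g\ne 0\})^{1-p/q}(\sum_\g\|f_\g\|_p^q)^{p/q}$, is not a Hölder estimate; for $q<p$ (the generic case here) the exponent $1-p/q$ is negative and the inequality is false in general (take one nonzero cap among many). It holds only as an approximate equality after the pigeonholing step that makes the nonzero $\|f_\g\|_p$ comparable, so it should be presented as part of the reduction, not as a standalone bound.
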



The powers of $R$ in the upper bound come from considering two natural sharp examples for the ratio $\|f\|_{L^p(B_R)}^p/(\sum_\g\|f_\g\|_p^q)^{p/q}$. The first is the square root cancellation example, where $|f_\g|\sim \chi_{B_R}$ for all $\g$ and $f=\sum_\g e_\g f_\g$ where $e_\g$ are $\pm1$ signs chosen (using Khintchine's inequality) so that $\|f\|_{L^p(B_R)}^p\sim R^{\b p/2}R^2$.
\[ \|f\|_p^p/(\sum_\g\|f_\g\|_p^q)^{p/q}\gtrsim (R^{\b p/2}R^2)/(R^{\b p/q}R^2)\sim R^{\b p(\frac{1}{2}-\frac{1}{q})}. \]
The second example is the constructive interference example. Let $f_\g=R^{1+\b}\widecheck{\eta}_\g$ where $\eta_\g$ is a smooth bump function approximating $\chi_\g$. Since $|f|=|\sum_\g f_\g|$ is approximately constant on unit balls and $|f(0)|\sim R^\b$, we have
\[\|f\|_p^p/(\sum_\g\|f_\g\|_p^q)^{p/q}\gtrsim (R^{\b p})/(R^{\b p/q}R^{1+\b})\sim R^{\b p(1-\frac{1}{q})-1-\b}. \]
There is one more example which may dominate the ratio: The block example is $f=R^{1+\b}\sum_{\g\subset \theta}\widecheck{\eta}_\g$ where $\theta$ is a canonical $R^{-1/2}\times R^{-1}$ block. Since $f=f_\theta$ and $|f_\theta|$ is approximately constant on dual $\sim R^{1/2}\times R$ blocks $\theta^*$, we have
\[ 
    \frac{\a^p|U_\a|}{(\#\g)^{\frac{p}{q}}\|f_\g\|_2^2}\gtrsim \frac{R^{(\b-\frac{1}{2})p}R^{\frac{3}{2}}}{R^{(\b-\frac{1}{2})\frac{p}{q}}R^{1+\b}}=R^{(\b-\frac{1}{2})p(1-\frac{1}{q})+\frac{1}{2}-\b}. \]
One may check that the constructive interference examples dominate the block example when $\frac{3}{p}+\frac{1}{q}\le 1$. 
We do not investigate $(l^q,L^p)$ small cap decoupling in the range $\frac{3}{p}+\frac{1}{q}>1$ in the present paper. 

The paper is organized as follows. In \textsection\ref{sharp}, we demonstrate that Theorem \ref{mainconj} is sharp using an exponential sum example. In \textsection\ref{implications}, we show how Theorem \ref{mainconj} follows easily from Theorem \ref{main} and how after some pigeonholing steps, so does Corollary \ref{lqLp}. Then in \textsection\ref{tools}, we develop the multi-scale high/low frequency tools we use in the proof of Theorem \ref{main}. These tools are very similar to those developed in \cite{gmw}. It appears that a more careful version of the proof of Theorem \ref{main} could also replace the $C_\e R^\e$ factor by a power of $(\log R)$, as is done for canonical decoupling in \cite{gmw}. Finally, in \textsection\ref{pf}, we prove a bilinear version of Theorem \ref{main} and then reduce to the bilinear case to finish the proof.

LG is supported by a Simons Investigator grant. DM is supported by the National Science Foundation under Award No. 2103249.

\section{A sharp example \label{sharp}}
Because we will show that Theorem \ref{main} implies Theorem \ref{mainconj}, it suffices to show that Theorem \ref{mainconj} is sharp, which we mean up to a $C_\e R^\e$ factor. 
Write $N=\lceil R^\b\rceil$. The function achieving the sharp bounds is
\[ F(x_1,x_2)=\sum_{k=1}^Ne(\frac{k}{N}x_1+\frac{k^2}{N^2}x_2)\eta(x_1,x_2),\]
where $\eta$ is a Schwartz function 
satisfying $\eta\sim 1$ on $[-R,R]^2$ and $\text{supp }\widehat{\eta}\subset B_{R^{-1}}$. We will bound the set
\[ U_\a=\{(x_1,x_2)\in[-R,R]^2:|F(x_1,x_2)|\ge \a\}. \]
\newline\noindent \underline{Case 1: $R<\a^2$.}

Suppose that $\a\sim N$ and note that $F(0,0)=N$ and $|F(0,0)|\sim N$ when $|(x_1,x_2)|<\frac{1}{10^3}$.  Using periodicity in the $x_1$ variable, there are $\sim R/N$ many other heavy balls where $|F(x)|\sim N$ in $[-R,R]^2$. For $\a$ in the range suppose that $R<\a^2< N^2$, we will show that $U_\a$ is dominated by larger  neighborhoods of the heavy balls.

Let $r=N^2/\a^2$ and assume without loss of generality that $r$ is in the range $R^\e<r<N^2/R\sim R^{2\b-1}\ll N$. The upper bound for $|U_\a|$ in Theorem \ref{mainconj} for this range is 
\[ |U_\a|\le C_\e R^\e \frac{N^2}{\a^4R}\sum_\g\|F_\g\|_2^2\sim C_\e R^\e\frac{N^2}{\a^4R}NR^2.\]
To demonstrate that this inequality is sharp, by periodicity in $x_1$, it suffices to show that $|U_\a\cap B_r|\gtrsim r^2$. 
Let 
$\phi_{r^{-1}}$ be a nonnegative bump function supported in $B_{r^{-1}/2}$ with $\phi_{r^{-1}}\gtrsim 1$ on $B_{r^{-1}/4}$. 
Let $\eta_r=r^4({\phi_{r^{-1}}*\phi_{r^{-1}}})^{\widecheck{\,\,\,\,}}$ and analyze the $L^2$ norm $\|F\|_{L^2(\eta_r)}$. By Plancherel's, 
\begin{align*}\|F\|_{L^2(\eta_r)}^2=\int|F|^2\eta_r\sim \int|\sum_{k=1}^Ne(\frac{k}{N}x_1+\frac{k^2}{N^2}x_2)|^2\eta_r(x_1,x_2)\\
=\sum_{k=1}^N\sum_{k'=1}^N\widehat{\eta}_r(\xi(\frac{k-k'}{N},\frac{k^2-(k')^2}{N^2}))\sim  N\cdot N/r \cdot r^2=rN^2. 
\end{align*} 
Next we bound $\|F\|_{L^4(B_{R^\e r})}$ above. It follows from the local linear restriction statement (see \cite{demeterbook} Theorem 1.14, Prop 1.27, and Exercise 1.32) 
\[   \|f\|_{L^4(B_{R^\e r})}^4\lesssim C_\e R^{O(\e)} r^{-3}\|\widehat{f}\|_{L^4(\R^2)}^4
\]
that 
\begin{align*}
\|F\|_{L^4(B_{R^\e r})}^4&\sim \|\sum_{k=1}^Ne(\frac{k}{N}x_1+\frac{k^2}{N^2}x_2)\eta_r(x_1,x_2)\|_{L^4(B_{R^\e r})}^4\\
&\lesssim C_\e R^\e r^{-3}\|\sum_{k=1}^N\widehat{\eta}_r(\xi-(\frac{k}{N},\frac{k^2}{N^2}))\|_{L^4(\R^2)}^4. 
\end{align*}
The $L^4$ norm on the right hand side is bounded above by 
\begin{align*}
    \int_{B_2}|\sum_{k=1}^N\widehat{\eta}_r(\xi-(\frac{k}{N},\frac{k^2}{N^2}))|^4d\xi &\lesssim (Nr^{-1})^3\int_{B_2}\sum_{k=1}^N|\widehat{\eta}_r(\xi-(\frac{k}{N},\frac{k^2}{N^2}))|^4d\xi\\
    &\lesssim (Nr^{-1})^3(r^2)^3\int_{B_2}\sum_{k=1}^N|\widehat{\eta}_r(\xi-(\frac{k}{N},\frac{k^2}{N^2}))|d\xi\sim N^4r^3.
\end{align*}
This leads to the upper bound $\|F\|_{L^4(B_{R^\e r})}^4\lesssim (\log R)N^4$. 

Finally, by dyadic pigeonholing, there is some $\lambda\in [R^{-1000}, N]$ so that $\|F\|_{L^2(\eta_r)}^2\lesssim (\log R)\lambda^2|\{x\in B_{R^\e r}:|F(x)|\sim \lambda\}|+C_\e R^{-2000}$. The lower bound for $\|F\|_{L^2(\eta_r)}^2$ and the upper bound for $\|F\|_{L^4(B_{R^\e r})}^4$ tell us that
\begin{align*} 
\lambda^2rN^2\sim \lambda^2\|F\|_{L^2(\eta_r)}^2&\lesssim (\log R) \lambda^4|\{x\in B_{R^\e r}:|F(x)|\sim\lambda\}|+C_\e \lambda^4 R^{-2000}\\
&\lesssim (\log R)\|F\|_{L^4(B_{R^\e r})}^4+C_\e \lambda^4 R^{-2000}\lesssim C_\e R^\e N^4+C_\e \lambda^4 R^{-2000}. 
\end{align*} 
Conclude that $\lambda^2\lesssim C_\e R^\e N^2/r\sim C_\e R^\e\a^2$. Assuming $R$ is sufficiently large depending on $\e$,  
\[ rN^2\sim (\log R)\lambda^2|\{x\in B_{R^\e r}:|F(x)|\sim\lambda\}|\lesssim C_\e R^\e (N^2/r)|\{x\in B_{R^\e r}:|F(x)|\sim\lambda\}|,\]
so $|\{x\in B_{R^\e r}:|F(x)|\sim\lambda\}|\gtrsim C_\e^{-1}R^{-\e}r^2$ and $\lambda^2\gtrsim C_\e^{-1}R^{-\e}N^2/r\sim C_\e^{-1}R^{-\e}\a^2$. 

\noindent \underline{Case 2: $R^\b<\a^2\le R$.} Let $q$, $a$, and $b$ be integers satisfying
 \begin{equation}\label{cond} q\text{ odd},\quad 1\le b\le q\le N^{2/3}, \quad (b,q)=1,\quad \text{and}\quad 0\le a\le q.  \end{equation}
Define the set $M(q,a,b)$ to be
\[ M(q,a,b):= \{(x_1,x_2)\in[0,N]\times[0,N^2]:|x_1-\frac{a}{q}N|\leq \frac{1}{10^{10}},\quad |x_2-\frac{b}{q}N^2|\leq \frac{1}{10^{10} }\} . \]
\begin{lemma}\label{disjmaj} For each $(q,a,b)\not=(q',a',b')$, both tuples satisfying \eqref{cond}, $M(q,a,b)\cap M(q',a',b')=\emptyset$.
\end{lemma}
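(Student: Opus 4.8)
The plan is to show that the $x_2$-coordinates of the centers of $M(q,a,b)$ and $M(q',a',b')$ are separated by more than $\frac{1}{10^{10}}\cdot 2$ (or the $x_1$-coordinates are), which forces the cubes to be disjoint since each has diameter comparable to $10^{-10}$. Since $b,b'$ appear in the $x_2$-coordinate as $\frac{b}{q}N^2$ and $\frac{b'}{q'}N^2$, and in the $x_1$-coordinate we have $\frac{a}{q}N$ and $\frac{a'}{q'}N$, the key quantitative input is a lower bound on $|\frac{b}{q}-\frac{b'}{q'}|$ whenever this difference is nonzero, namely $|\frac{b}{q}-\frac{b'}{q'}| = \frac{|bq'-b'q|}{qq'}\ge \frac{1}{qq'}\ge \frac{1}{N^{4/3}}$. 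Multiplying by $N^2$ gives separation $\ge N^{2/3}\gg 10^{-10}$ for $R$ (hence $N$) large, so the $x_2$-coordinates of the two centers are far apart unless $\frac{b}{q}=\frac{b'}{q'}$. The analogous computation for $x_1$ gives $|\frac{a}{q}-\frac{a'}{q'}|N\ge N^{1/3}\gg 10^{-10}$ unless $\frac{a}{q}=\frac{a'}{q'}$.

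So the heart of the matter is: if both $\frac{b}{q}=\frac{b'}{q'}$ and $\frac{a}{q}=\frac{a'}{q'}$, then $(q,a,b)=(q',a',b')$, contradicting the hypothesis. Here I would use the coprimality and parity constraints in \eqref{cond}. Since $(b,q)=1$ and $(b',q')=1$, the equality $\frac{b}{q}=\frac{b'}{q'}$ of fractions in lowest terms forces $q=q'$ and $b=b'$ — this is the uniqueness of the reduced-fraction representation. (One should note $1\le b\le q$ and $1\le b'\le q'$ ensures we are genuinely comparing the canonical reduced forms with positive denominator.) Once $q=q'$ is established, $\frac{a}{q}=\frac{a'}{q'}=\frac{a'}{q}$ immediately gives $a=a'$. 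Hence $(q,a,b)=(q',a',b')$.

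I would structure the write-up as: first dispose of the case $\frac{b}{q}\ne\frac{b'}{q'}$ by the $x_2$-separation estimate above (noting $q,q'\le N^{2/3}$ so $qq'\le N^{4/3}$ and $N^2/(qq')\ge N^{2/3}$, which exceeds $\frac{2}{10^{10}}$ once $N$ is large — and $N$ is large since $R\ge 2$ and we may assume $R$ large, as in the rest of \textsection\ref{sharp}); then, assuming $\frac{b}{q}=\frac{b'}{q'}$, deduce $q=q'$, $b=b'$ from coprimality; then handle the $x_1$-coordinate, which either gives separation $\ge N^{1/3}>\frac{2}{10^{10}}$ (contradicting that the cubes could overlap in a tuple-distinct situation) or forces $a=a'$. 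The main (really the only) obstacle is the mild annoyance that the statement as phrased does not explicitly say "$R$ sufficiently large," so I would either invoke the running convention of this section or, alternatively, observe that for small $R$ the set of admissible tuples is empty or a single tuple and the claim is vacuous; the clean route is to just note $N=\lceil R^\beta\rceil$ may be taken large, consistent with the rest of the sharp-example argument. No genuinely hard step here; it is a packing/Farey-separation argument.
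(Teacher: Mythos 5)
Your proof is essentially the paper's: case-split on whether $\frac{b}{q}=\frac{b'}{q'}$, use coprimality in the equality case to force $q=q'$ and $b=b'$, then get a contradiction from the $x_1$-coordinate separation $\frac{|a-a'|N}{q}\ge N^{1/3}$, and in the inequality case get a contradiction from the $x_2$-separation $\frac{|bq'-b'q|N^2}{qq'}\ge N^{2/3}$. One small slip worth flagging: the standalone claim that $|\frac{a}{q}-\frac{a'}{q'}|N\ge N^{1/3}$ is false in general (the Farey bound only gives $\frac{N}{qq'}\ge N^{-1/3}$); it holds only once $q=q'$ is established, which your final structure does do correctly, and the hedging about ``$R$ sufficiently large'' is unnecessary since $N=\lceil R^\beta\rceil\ge 2$ already gives $N^{1/3}>2\cdot 10^{-10}$.
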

\begin{proof} If $\frac{b}{q}=\frac{b'}{q'}$, then using the relatively prime part of \eqref{cond}, $b=b'$ and $q=q'$. Then we must have $a\not=a'$, meaning that if $x_1$ is the first coordinate of a point in $M(q,a,b)\cap M(q,a',b')$, then 
\[ \frac{2}{10^{10}}\ge |x_1-\frac{a}{q}N|+|x_1-\frac{a'}{q}N|\ge \frac{|a-a'|N}{q}\ge N^{1/3}   \]
which is clearly a contradiction. The alternative is that $\frac{b}{q}\not=\frac{b'}{q'}$ in which case for $x_2$ the second coordinate of a point in $M(q,a,b)\cap M(q',a',b')$, 
\[  \frac{2}{10^{10}}\ge |x_2-\frac{b}{q}N^2|+|x_2-\frac{b'}{q'}N^2|\ge \frac{|b'q-bq'|N^2}{qq'}\ge \frac{N^2}{qq'}\ge N^{2/3},    \]
which is another contradiction. 
\end{proof}
\begin{lemma} \label{majest} For each $(x_1,x_2)\in M(q,a,b)$, $|F(x_1,x_2)|\sim \frac{N}{q^{1/2}}$, here meaning within a factor of $4$. 
\end{lemma}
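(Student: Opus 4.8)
The plan is to recognize that on $M(q,a,b)$ the exponential sum $\sum_{k=1}^N e(\frac{k}{N}x_1 + \frac{k^2}{N^2}x_2)$ is, up to a negligible error from the $\eta$ factor (which satisfies $\eta\sim 1$ on $[-R,R]^2\supset[0,N]\times[0,N^2]$ since $R\geq R^\beta\geq N/2$), a classical complete-plus-incomplete Gauss sum. Indeed, writing $x_1 = \frac{a}{q}N + u_1$ and $x_2 = \frac{b}{q}N^2 + u_2$ with $|u_1|,|u_2|\leq 10^{-10}$, the phase becomes $\frac{ak}{q} + \frac{bk^2}{q} + \frac{k}{N}u_1 + \frac{k^2}{N^2}u_2$, so that
\[ F(x_1,x_2) \sim \sum_{k=1}^N e\!\left(\frac{ak + bk^2}{q}\right) e\!\left(\frac{k}{N}u_1 + \frac{k^2}{N^2}u_2\right). \]
First I would split the sum into residue classes $k \equiv \ell \pmod q$, $0\leq \ell < q$: on each class the quadratic residue phase $e(\frac{ak+bk^2}{q})$ is the constant $e(\frac{a\ell + b\ell^2}{q})$, and the remaining factor $e(\frac{k}{N}u_1 + \frac{k^2}{N^2}u_2)$ varies slowly because as $k$ ranges over $\{\ell, \ell+q, \ell+2q,\dots\}\cap[1,N]$ (about $N/q$ terms), the argument changes by at most $O(\frac{q}{N}\cdot N + \frac{1}{N^2}\cdot N\cdot 2N)=O(q) \cdot$ — wait, more carefully, the total variation of $\frac{k}{N}u_1$ over the full range is $\leq |u_1|\leq 10^{-10}$ and of $\frac{k^2}{N^2}u_2$ is $\leq |u_2|\leq 10^{-10}$, so the slowly-varying factor stays within, say, $e^{\pm i/10^8}$ of $1$ uniformly in $k$. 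Hence
\[ F(x_1,x_2) \sim \Big(1 + O(10^{-8})\Big)\sum_{\ell=0}^{q-1} e\!\left(\frac{a\ell + b\ell^2}{q}\right)\cdot \#\{k\in[1,N]: k\equiv \ell \ (q)\}, \]
and since $q \leq N^{2/3}$ we have $\#\{k\in[1,N]:k\equiv\ell\ (q)\} = \frac{N}{q} + O(1) = \frac{N}{q}(1 + O(q/N)) = \frac{N}{q}(1+O(N^{-1/3}))$.

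The heart of the matter is then the complete quadratic Gauss sum $G(a,b;q) := \sum_{\ell=0}^{q-1} e(\frac{b\ell^2 + a\ell}{q})$. For this I would invoke the classical evaluation: for $q$ odd and $(b,q)=1$, one has $|G(a,b;q)| = \sqrt{q}$ exactly. (Concretely, completing the square, $G(a,b;q) = e\!\left(-\frac{a^2 \bar{4b}}{q}\right) \sum_{m} e\!\left(\frac{b m^2}{q}\right)$ where $\bar{4b}$ is the inverse of $4b$ mod $q$, and the standard Gauss sum $\sum_{m\bmod q} e(bm^2/q)$ has modulus $\sqrt q$ for odd $q$ — this is where oddness of $q$ is used, as the sum's modulus is $\sqrt q$ rather than $0$ or $2\sqrt q$.) Therefore $|\sum_{\ell} e(\frac{a\ell+b\ell^2}{q})\cdot\frac{N}{q}| = \sqrt q \cdot \frac{N}{q} = \frac{N}{\sqrt q}$.

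Putting the pieces together: $|F(x_1,x_2)| = \frac{N}{\sqrt q}\big(1 + O(10^{-8}) + O(N^{-1/3})\big)$, which for $R$ (hence $N$) sufficiently large lies within a factor of $4$ of $\frac{N}{q^{1/2}}$, as claimed. The main obstacle — really the only non-bookkeeping point — is the exact modulus $\sqrt q$ of the Gauss sum for odd $q$ with $(b,q)=1$; everything else (the residue-class splitting, the error control from $\eta$ and from the slowly varying factor, the $O(1)$ error in counting lattice points in a residue class) is routine given $q\leq N^{2/3}$ and $|u_1|,|u_2|\leq 10^{-10}$. One should also double-check that the approximation $F\sim(\text{main term})\eta$ loses only a multiplicative constant close to $1$, using $\eta\sim 1$ on the relevant region and $\mathrm{supp}\,\widehat\eta\subset B_{R^{-1}}$; since $M(q,a,b)\subset[0,N]\times[0,N^2]$ and we will ultimately only need this inside $[-R,R]^2$, this is harmless.
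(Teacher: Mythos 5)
Your overall strategy is fine and is essentially the standard self-contained proof of the major-arc estimate; the paper itself does none of this and simply cites Proposition 13.4 of \cite{demeterbook}. The phase separation into $e(\frac{ak+bk^2}{q})$ times a slowly varying factor, the control of that factor by $|u_1|,|u_2|\le 10^{-10}$, and the exact evaluation $|G(a,b;q)|=\sqrt q$ for $q$ odd with $(b,q)=1$ via completing the square are all correct.

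There is, however, a genuine quantitative gap in how you pass from the complete Gauss sum to the sum over $k\in[1,N]$. You write $\#\{k\in[1,N]:k\equiv\ell\ (q)\}=\frac{N}{q}(1+O(N^{-1/3}))$ and then conclude $|F|=\frac{N}{\sqrt q}(1+O(10^{-8})+O(N^{-1/3}))$, but a per-class relative error does not pass through an oscillating sum as a global multiplicative factor. Writing the count as $\frac{N}{q}+\delta_\ell$ with $|\delta_\ell|\le 1$, the error contribution is $\sum_\ell e(\frac{a\ell+b\ell^2}{q})\delta_\ell$, which by the triangle inequality is only $O(q)$, whereas the main term is $\frac{N}{q}\,|G(a,b;q)|=\frac{N}{\sqrt q}$. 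Since the hypothesis allows $q$ all the way up to $N^{2/3}$, these two quantities are comparable at the endpoint (error $\le q\le N^{2/3}$ versus main term $\ge N^{2/3}-N^{1/3}$), so the lower bound $|F|\gtrsim\frac{N}{4\sqrt q}$ --- which is exactly what Proposition \ref{sharplow} needs --- is not established for $q$ near $N^{2/3}$. The fix is standard but must be stated: the $\delta_\ell$ are the indicator of an interval of residues, so the error term is an \emph{incomplete} quadratic Gauss sum, which by completion (or Weyl differencing) satisfies the square-root cancellation bound $O(\sqrt q\,\log q)=O(N^{1/3}\log N)$, genuinely negligible against $\frac{N}{\sqrt q}\ge N^{2/3}$. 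Equivalently, split $[1,N]$ into $\lfloor N/q\rfloor$ complete periods plus one leftover block of length $<q$ and apply the incomplete Gauss sum estimate to the leftover block. With that one additional input your argument closes.
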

\begin{proof}
This follows from Proposition 13.4 in \cite{demeterbook}.
\end{proof}
\begin{prop}\label{sharplow} Let $R^\b<\a^2\le R$ be given. There exists $v\in[0,N^2]$ satisfying  
\[ |\{(x_1,x_2)\in[0,R]^2:|F(x_1,x_2+v))|\ge \a\}|\gtrsim \frac{R^2N^3}{\a^6}  . \]
\end{prop}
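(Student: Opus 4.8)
The plan is to exhibit the lower bound by counting ``major arc'' unit balls on which $|F|$ is large --- exactly as in the classical analysis of Gauss sums --- and then choosing the translation $v$ so that a maximal cluster of these balls is brought inside $[0,R]^2$. Set $Q=\lfloor c\,N^2/\a^2\rfloor$ for a small absolute constant $c$ chosen so that $N/\sqrt q\ge 4\a$ whenever $q\le Q$; note that $R^\b<\a^2\le R$ forces $1\lesssim Q\lesssim N$ and, crucially, $Q\gtrsim N^2/R\gtrsim N/\sqrt R$. Consider the triples $(q,a,b)$ with $q$ in the dyadic range $[Q/2,Q]$, $1\le a\le q-1$, $1\le b\le q$, $(b,q)=1$, and the associated unit balls $M(q,a,b)$. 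By Lemma~\ref{majest} and the choice of $c$ we have $|F|\gtrsim N/\sqrt q\ge 4\a$ on each $M(q,a,b)$, by Lemma~\ref{disjmaj} these balls are pairwise disjoint, and each has measure $\sim 1$.

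Next I count how many of these balls, and their periodic translates, land in $[0,R]\times[v,v+R]$ for a good choice of $v$. Using the $x_1$-periodicity of the exponential sum with period $N$ (and $\eta\sim 1$ on the relevant region), for each fixed pair $(q,b)$ the balls $M(q,a,b)$ with $a=1,\dots,q-1$, together with their translates by $(jN,0)$, $j=0,\dots,\lfloor R/N\rfloor-1$, give $\gtrsim q\cdot (R/N)\sim QR/N$ disjoint unit balls lying in $[0,R]$ in the first variable, all at height $\tfrac bq N^2$. It remains to count how many heights $\tfrac bqN^2$ (with $q\sim Q$, $(b,q)=1$) fall in a window of length $R$: there are $\sum_{q\sim Q}\varphi(q)\sim Q^2$ such heights in $[0,N^2]$, and working on the circle $\R/N^2\Z$ (legitimate since the exponential sum is $N^2$-periodic in $x_2$) and averaging $\#\{\text{heights in }[v,v+R]\}$ over $v\in[0,N^2]$ yields the value $(\#\text{heights})\cdot R/N^2$, so some $v$ puts $\gtrsim Q^2R/N^2$ of the heights in $[v,v+R]$ (this is $\ge 1$ precisely because $Q\gtrsim N/\sqrt R$). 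Combining, the translated major-arc balls give
\[
|\{(x_1,x_2)\in[0,R]^2:\ |F(x_1,x_2+v)|\ge\a\}|\ \gtrsim\ \frac{QR}{N}\cdot\frac{Q^2R}{N^2}\cdot 1\ =\ \frac{Q^3R^2}{N^3}\ \sim\ \frac{R^2N^3}{\a^6},
\]
using $Q\sim N^2/\a^2$, which is the claimed bound.

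The step I expect to require the most care is reconciling the range of denominators with the hypotheses of the major-arc lemmas: $Q\sim N^2/\a^2$ can be as large as $\sim N$ when $\a^2$ is close to $R^\b$, whereas \eqref{cond} is stated for $q\le N^{2/3}$. One should check that Lemmas~\ref{disjmaj} and~\ref{majest} (via Proposition~13.4 of \cite{demeterbook}) remain valid for $q\lesssim N$ with the constant-size balls $M(q,a,b)$ --- the disjointness argument goes through verbatim as long as $qq'\le N^2$, and the Gauss-sum asymptotic $|F|\sim N/\sqrt q$ persists in this range --- or else treat the intermediate regime $N\lesssim\a^2\lesssim N^{4/3}$ by a separate (but entirely analogous) choice of major arcs. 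A second, bookkeeping-level point is the interaction of the Schwartz cutoff $\eta$ with the vertical translation: one thinks of $x_2\mapsto x_2+v$ as modulating the frequencies by $e(v k^2/N^2)$ while keeping $\eta$ centered at the origin, so that on $[0,R]^2$ one genuinely has $\eta\sim 1$ and the estimates above apply; this keeps the Fourier support inside $\mc N_{R^{-1}}(\P^1)$ and each $\|f_\g\|_\infty\sim 1$, as needed for comparison with Theorem~\ref{mainconj}.
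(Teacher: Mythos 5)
Your argument is correct and is essentially the paper's own proof: both count the pairwise disjoint, unit-size major-arc balls $M(q,a,b)$ with $q\sim N^2/\a^2$ on which $|F|\gtrsim N/q^{1/2}\ge\a$ (Lemmas \ref{disjmaj} and \ref{majest}), exploit $N$-periodicity in $x_1$, and choose $v$ so that a maximal cluster of heights $\tfrac{b}{q}N^2$ lies in a window of length $R$ — your averaging over $v\in[0,N^2]$ is equivalent to the paper's pigeonholing of the $\sim\sum_{q\sim N^2/\a^2}\varphi(q)\sim N^4/\a^4$ distinct fractions $b/q$ into intervals of length $R/N^2$, and the resulting counts $\frac{Q^3R^2}{N^3}$ and $\frac{R}{N}\cdot\frac{RN^4}{\a^6}$ agree. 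Your caveat about the restriction $q\le N^{2/3}$ in \eqref{cond} is well placed: the argument needs $q\sim N^2/\a^2$, which exceeds $N^{2/3}$ once $\a^2<N^{4/3}$, so the check you describe (disjointness for $qq'\lesssim N^2$ and persistence of the Gauss-sum asymptotic for $q\lesssim N$) is genuinely required — for the paper's own proof as much as for yours.
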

\begin{proof} First note that by $N$-periodicity in $x_1$, 
\[|\{(x_1,x_2)\in[0,R]^2:|F(x_1,x_2+v))|\ge \a\}|\gtrsim \frac{R}{N}|\{(x_1,x_2)\in([0,N]\times[0,R]):|F(x_1,x_2+v))|\ge \a\}|. \]
The function $F$ is $N^2$ periodic in $x_2$, but $R<N^2$ so we need to find $v\in[0,N^2]$ making the set in the lower bound above largest. 

By Lemma \ref{majest}, it suffices to count the tuples $(q,a,b)$ satisfying \eqref{cond}, $q\le N^2/(16\a^2)$, and $|\frac{b}{q}N^2-v|\le R$, where $v$ is to be determined. Begin by considering the distribution of points $\frac{b}{q}$ in $[0,1]$, where $1\le b\le q\sim\frac{N^2}{\a^2}$, $(b,q)=1$. As in the proof of Lemma \ref{disjmaj}, if $\frac{b}{q}\not=\frac{b'}{q'}$, then $|\frac{b}{q}-\frac{b'}{q'}|\gtrsim \frac{\a^2}{N^4}$. Fix $b_0,q_0$ and consider the set $\{\frac{b}{q}:\frac{b}{q}=\frac{b_0}{q_0},\quad1\le b\le q\sim N^2/\a^2\}$. Let $q_m$ be maximal such that for some $1\le b_m\le q_m\sim N^2/\a^2$ and $(b_m,q_m)=1$, $\frac{b_m}{q_m}=\frac{b_0}{q_0}$. Then $q_0=q_m-k$ for some integer $k$ and $b_m(q_m-k)=b_0q_m$. Rearrange to get $q_m(1-\frac{b_0}{b_m})=k$.  Thus $q_0=q_m\frac{b_0}{b_m}\sim N^2/\a^2$, which implies that $\frac{b_0}{b_m}\sim 1$. Conclude that 
there are $\gtrsim \sum_{q\sim N^2/\a^2}\p(q)$ many unique points $\frac{b}{q}$ in $[0,1]$ satisfying our prescribed conditions for $\p$ denoting the Euler totient function. Use Theorem 3.7 in \cite{apostol} 
to estimate $\sum_{q\sim N^2/\a^2}\p(q)\sim N^4/\a^4$, as long as $N/\a$ is larger than some absolute constant. By the pigeonhole principle, there exists some $R/N^2$ interval $I\subset[0,1]$ containing $\sim \lceil\frac{ N^4}{\a^4}\frac{R}{N^2}\rceil$ many points $\frac{b}{q}$ with $1\le b\le q\sim N^2/\a^2$, $(b,q)=1$. There are also $\sim N^2/\a^2$ many choices for $a$ to complete the tuple $(q,a,b)$ satisfying \eqref{cond}. Let $c$ denote the center of $I$ and take $v=cN^2$ in the proposition statement and conclude that
\[ |\{(x_1,x_2)\in([0,N]\times[0,R]):|F(x_1,x_2+v))|\ge \a\}|\gtrsim \frac{RN^4}{\a^6}\]
to finish the proof. 
\end{proof} 
Note that Proposition \ref{sharplow} shows the sharpness of Theorem \ref{mainconj} in the range $R^\b<\a\le R$ since
\[ \frac{R^{2\b}}{\a^6}\sum_\g\|F_\g\|_2^2\sim \frac{R^{2\b}}{\a^6}R^\b R^2=\frac{N^3R^2}{\a^6}. \]
The sharpness of the trivial estimate $|U_\a\cap[-R,R]^2|\lesssim R^2$ in the range $\a^2<R^\b$ follows from Case 2 since for $\a^2<R^\b$, 
\[ |U_\a\cap[-R,R]^2|\ge |U_{R^{\b/2}}\cap[-R,R]^2|\gtrsim \frac{R^{2\b}}{(R^{\b/2})^6}\sum_\g\|F_\g\|_2^2\sim R^2. \]

\section{Implications of Theorem \ref{main}\label{implications}}

\begin{proof}[Proof of Theorem \ref{mainconj} from Theorem \ref{main}] First suppose that $\a^2> \frac{\lambda(1)^2}{\max_{s}\lambda(s^{-1}R^{-1})\lambda(s)} $. Then
\begin{align*}
    \max_{s}\lambda(s^{-1}R^{-1})\lambda(s)&\lesssim \max_{s}(s^{-1}R^{-1}R^\b)(sR^\b)=R^{2\b-1}\\
    &\le \begin{cases} R^{2\b-1}\quad&\text{if}\quad \a^2>R\\
\frac{R^{2\b}}{\a^2}\quad&\text{if}\quad R^\b\le \a^2\le R
\end{cases}. 
\end{align*}

Now suppose that 
$\a^2\le  \frac{\lambda(1)^2}{\max_{s}\lambda(s^{-1}R^{-1})\lambda(s)} $. Then
\begin{align*}
\frac{\lambda(1)^2}{\a^2}&\lesssim \begin{cases} R^{2\b-1}\quad&\text{if}\quad \a^2>R\\
\frac{R^{2\b}}{\a^2}\quad&\text{if}\quad R^\b\le \a^2\le R
\end{cases}. 
\end{align*}

\end{proof}

\begin{proof}[Proof of Corollary \ref{lqLp} from Theorem \ref{main}]
To see how this corollary follows from Theorem \ref{main}, first use an analogous series of pigeonholing steps as in Section 5 of \cite{gmw} to reduce to the case where  $\|f_\g\|_\infty\lesssim 1$ for all $\g$ and there exists $C>0$ so that $\|f_\g\|_p^p$ is either $0$ or comparable to $C$ for all $\g$. Split the integral
\[ \int|f|^p=\sum_{R^{-1000}\le \a\lesssim R^\b}\int_{U_\a}|f|^p+\int_{|f|<R^{-1000}}|f|^p \]
where $U_\a=\{x:|f(x)|\sim\a\}$ and assume via dyadic pigeonholing that
\[\int|f|^p\lesssim \a^p|U_\a| \]
(ignoring the case that the set where $|f|\le R^{-1000}$ dominates the integral which may be handled trivially). The result of all of the pigeonholing steps is that the statement of Corollary \ref{lqLp} follows from showing that
\[ \a^p|U_\a|\le C_\e R^\e (R^{\b p (1-\frac{1}{q})-(1+\b)}+R^{\b p(\frac{1}{2}-\frac{1}{q})})\lambda(1)^{\frac{p}{q}-1}\sum_\g\|f_\g\|_2^2 \]
where $f$ satisfies the hypotheses of Theorem \ref{main}. The full range $\frac{3}{p}+\frac{1}{q}\le 1$ follows from $p$ in the critical range $4\le p\le 6$, which we treat first. 
\newline\noindent{\underline{$4\le p\le 6$:}}
There are two cases depending on which upper bound is larger in Theorem \ref{main}. First we assume the $L^4$ bound holds, in which case
\begin{align*}
    \a^p|U_\a|&\le C_\e R^\e \a^{p-4}\underset{s}{\max}\lambda(s^{-1}R^{-1})\lambda(s)\sum_\g\|f_\g\|_2^2 \\
    &\sim C_\e R^\e \frac{\a^{p-4}}{\lambda(1)^{\frac{p}{q}-1}}\underset{s}{\max}\lambda(s^{-1}R^{-1})\lambda(s)(\sum_\g\|f_\g\|_p^q)^{\frac{p}{q}} \\
    &\lesssim C_\e R^\e \frac{\lambda(1)^{p-4}}{\lambda(1)^{\frac{p}{q}-1}}\underset{s}{\max}(R^\b s^{-1}R^{-1})(R^\b s)(\sum_\g\|f_\g\|_p^q)^{\frac{p}{q}}\\
    &\lesssim C_\e R^\e \lambda(1)^{p(1-\frac{1}{q})-3}R^{2\b-1}(\sum_\g\|f_\g\|_p^q)^{\frac{p}{q}} .
\end{align*}
Since $p(1-\frac{1}{q})-3\ge 0$, we may use the bound $\lambda(1)\lesssim R^\b$ to conclude that 
\[ \lambda(1)^{p(1-\frac{1}{q})-3}R^{2\b-1}\le R^{\b p(1-\frac{1}{q})-3\b+2\b-1}=R^{\b p(1-\frac{1}{q})-(1+\b)}.\]
The other case is that the $L^6$ bound holds in Theorem \ref{main}. We may also assume that $\a^2>\lambda(1)$ since otherwise we trivially have
\[ \a^p|U_\a|\le \lambda(1)^{ \frac{p}{2}-1}\sum_\g\|f_\g\|_2^2\sim \lambda(1)^{\frac{p}{2}-1+1-\frac{p}{q}}(\sum_\g\|f_\g\|_p^q)^{\frac{p}{q}}\lesssim R^{\b p(\frac{1}{2}-\frac{1}{q})}(\sum_\g\|f_\g\|_p^q)^{\frac{p}{q}}\]
where we used that $q\ge2$ since $4\le p\le 6$ and $\frac{3}{p}+\frac{1}{q}\le 1$. Now using the assumptions $\a^2>\lambda(1)$ and $p\le 6$, we have 
\begin{align*}
    \a^p|U_\a|&\le C_\e R^\e \a^{p-6}\lambda(1)^2\lambda(1)^{1-\frac{p}{q}}(\sum_\g\|f_\g\|_p^q)^{\frac{p}{q}} \\
    &\sim C_\e R^\e \lambda(1)^{p(\frac{1}{2}-\frac{1}{q})}(\sum_\g\|f_\g\|_p^q)^{\frac{p}{q}}\lesssim C_\e R^\e R^{\b p(\frac{1}{2}-\frac{1}{q})}(\sum_\g\|f_\g\|_p^q)^{\frac{p}{q}} .
\end{align*}
\newline\noindent{\underline{$3\le p< 4$:}} Suppose that $\a<R^{\b/2}$. Then using $L^2$-orthogonality, 
\[ \a^p|U_\a|\le R^{\frac{\b}{2}(p-2)}\sum_\g\|f_\g\|_2^2\sim R^{\frac{\b}{2}(p-2)}\lambda(1)^{1-\frac{p}{q}}(\sum_\g\|f_\g\|_p^q)^{\frac{p}{q}}. \]
Since in this subcase, $1-\frac{p}{q}\ge1-(p-3)> 0$, we are done after noting that $R^{\frac{\b}{2}(p-2)}\lambda(1)^{1-\frac{p}{q}}\le R^{\b p(\frac{1}{2}-\frac{1}{q})}$. Now assume that $\a\ge R^{\b/2}$ and use the $p=4$ case  above (noting that $R^{4\b(1-\frac{1}{q})-(1+\b)}\le R^{4\b(\frac{1}{2}-\frac{1}{q})}$) to get
\begin{align*}
    \a^p|U_\a|&\le \frac{\a^4}{(R^{\b/2})^{4-p}}|U_\a|\le R^{-\frac{\b}{2}(4-p)}C_\e R^\e R^{4\b (\frac{1}{2}-\frac{1}{q})}\lambda(1)^{\frac{4}{q}-1}\sum_\g\|f_\g\|_2^2 \\
    &\le C_\e R^\e R^{\b p(\frac{1}{2}-\frac{1}{q})}\lambda(1)^{\frac{p}{q}-1}\sum_\g\|f_\g\|_2^2 .
\end{align*}
\newline\noindent{\underline{$6<p$:}} In this range, we use the trivial bound $\a\le \lambda(1)$ and the $p=6$ case above (noting that $ R^{6\b(\frac{1}{2}-\frac{1}{q})}\le R^{6\b(1-\frac{1}{q})-(1+\b)}$) to get
\begin{align*}
    \a^p|U_\a|&\le \lambda(1)^{p-6}\a^6|U_\a| \le \lambda(1)^{p-6}C_\e R^\e R^{6\b(1-\frac{1}{q})-(1+\b)}\lambda(1)^{\frac{6}{q}-1}\sum_\g\|f_\g\|_2^2 \\
    &= \Big(\frac{\lambda(1)}{R^\b}\Big)^{(p-6)(1-\frac{1}{q})}C_\e R^\e R^{p\b(1-\frac{1}{q})-(1+\b)}\lambda(1)^{\frac{p}{q}-1}\sum_\g\|f_\g\|_2^2 \\
    &\le C_\e R^\e R^{p\b(1-\frac{1}{q})-(1+\b)}\lambda(1)^{\frac{p}{q}-1}\sum_\g\|f_\g\|_2^2.
\end{align*}

\end{proof}

\section{Tools to prove Theorem \ref{main} \label{tools}}

The proof of Theorem \ref{main} follows the high/low frequency decomposition and pruning approach from \cite{gmw}. In this section, we introduce notation for different scale neighborhoods of $\P^1$, a pruning process for wave packets at various scales, some high/low lemmas which are used to analyze the high/low frequency parts of square functions, and a version of a bilinear restriction theorem for $\P^1$.   

Begin by fixing some notation, as above. Let $\b\in[\frac{1}{2},1]$ and $R\ge2$. The parameter $\a>0$ describes the superlevel set 
\[ U_\a=\{x\in\R^2:|f(x)|\ge \a\}.\]
For $\e>0$, we analyze scales $R_k=R^{k\e}$
, noting that $R^{-1/2}\le R_k^{-1/2}\le 1$.
Let $N$ distinguish the index so that $R_N$ is closest to $R$. Since $R$ and $R_N$ differ at most by a factor of $R^\e$, we will ignore the distinction between $R_N$ and $R$ in the rest of the argument. 

Define the following collections, each of which partitions a neighborhood of $\P$ into approximate rectangles. 
\begin{enumerate}
    \item $\{\g\}$ is a partition of $\mc{N}_{R^{-1}}(\P^1)$ by approximate $R^{-\b}\times R^{-1}$ rectangles, described explicitly in \eqref{blocks}. 
    \item $\{\theta\}$ is a partition of $\mc{N}_{R^{-1}}(\P^1)$ by approximate $R^{-1/2}\times R^{-1}$ rectangles. In particular, let each $\theta$ be a union of adjacent $\g$. 
    \item $\{\tau_k\}$ is a partition of $\mc{N}_{R_k^{-1}}(\P^1)$ by approximate $R_k^{-1/2}\times R_k^{-1}$ rectangles. Assume the additional property that $\g\cap\tau_k=\emptyset$ or $\g\subset\tau_k$. 
\end{enumerate}

\subsection{A pruning step}

We will define wave packets at each scale $\tau_k$, and prune the wave packets associated to $f_{\tau_k}$ according to their amplitudes. 

For each $\tau_k$, fix a dual rectangle $\tau_k^*$ which is a $2R_k^{1/2}\times 2R_k$ rectangle centered at the origin and comparable to the convex set
\[ \{x\in\R^2:|x\cdot\xi|\le 1\quad\forall\xi\in\tau_k\}. \]
Let $\T_{\tau_k}$ be the collection of tubes $T_{\tau_k}$ which are dual to $\tau_k$, contain $\tau_k^*$, and which tile $\R^2$. Next, we will define an associated partition of unity $\s_{T_{\tau_k}}$. 
First let $\p(\xi)$ be a bump function supported in $[-\frac{1}{4},\frac{1}{4}]^2$. For each $m\in\Z^2$, let 
\[ \s_m(x)=c\int_{[-\frac{1}{2},\frac{1}{2}]^2}|\widecheck{\p}|^2(x-y-m)dy, \]
where $c$ is chosen so that $\sum_{m\in\Z^2}\s_m(x)=c\int_{\R^2}|\widecheck{\p}|^2=1$. Since $|\widecheck{\p}|$ is a rapidly decaying function, for any $n\in\N$, there exists $C_n>0$ such that
\[ \s_m(x)\le c\int_{[0,1]^2}\frac{C_n}{(1+|x-y-m|^2)^n}dy \le \frac{\tilde{C}_n}{(1+|x-m|^2)^n}. \]
Define the partition of unity $\s_{T_{\tau_k}}$ associated to ${\tau_k}$ to be $\s_{T_{\tau_k}}(x)=\s_m\circ A_{\tau_k}$, where $A_{\tau_k}$ is a linear transformation taking $\tau_k^*$ to $[-\frac{1}{2},\frac{1}{2}]^2$ and $A_{\tau_k}(T_{\tau_k})=m+[-\frac{1}{2},\frac{1}{2}]^2$. The important properties of $\s_{T_{\tau_k}}$ are (1) rapid decay off of $T_{\tau_k}$ and (2) Fourier support contained in $\tau_k$. 

To prove upper bounds for the size of $U_\a$, we will actually bound the sizes of $\sim \e^{-1}$ many subsets which will be denoted $U_\a\cap\Omega_k$, $U_\a\cap H$, and $U_\a\cap L$. The pruning process sorts between important and unimportant wave packets on each of these subsets, as described in Lemma \ref{ftofk} below. 

Partition $\T_{\theta}=\T_{\theta}^{g}\sqcup\T_{\theta}^{b}$ into a ``good" and a ``bad" set as follows. Let $\d>0$ be a parameter to be chosen in \textsection\ref{bilred} and set 
\[ T_{\theta}\in\T_{\theta}^{g}\quad\text{if}\quad \|\s_{T_{\theta}}f_{\theta}\|_{L^\infty(R^{2})}\le R^{M\d}\frac{\lambda(1)}{\a}  \]
where $M>0$ is a universal constant we will choose in the proof of Proposition \ref{mainprop}.

\begin{definition}[Pruning with respect to $\tau_k$]\label{taukprune} For each $\theta$ and $\tau_{N-1}$, define the notation $f_\theta^N=\sum_{T_\theta\in\T_{\theta}^{g}}\s_{T_\theta} f_\theta$ and $f_{\tau_{N-1}}^{N}=\sum_{\theta\subset\tau_{N-1}}f_\theta^N$. For each $k<N$, let 
\begin{align*} \T_{\tau_k}^{g}&=\{T_{\tau_k}\in\T_{\tau_{k}}:\|\s_{T_{\tau_{k}}}f_{\tau_{k}}^{k+1}\|_{L^\infty(R^2)}\le R^{M\d}\frac{\lambda(1)}{\a}\}, \\
f_{\tau_{k}}^{k}=\sum_{T_{\tau_k}\in\T_{\tau_k}^{g}}&\s_{T_{\tau_k}}f^{k+1}_{\tau_k}\qquad\text{and}\qquad  f_{\tau_{k-1}}^{k}=\sum_{\tau_k\subset\tau_{k-1}}f_{\tau_k}^k .
\end{align*}
\end{definition}
For each $k$, define the $k$th version of $f$ to be $f^k=\underset{\tau_k}{\sum}f_{\tau_k}^k$.
\begin{lemma}[Properties of $f^k$] \label{pruneprop}
\begin{enumerate} 
\item $ | f_{\tau_{k}}^k (x) | \le |f_{ \tau_{k}}^{k+1}(x)|\le \#\g\subset\tau_k. $
\item $ \| f_{\tau_k}^k \|_{L^\infty} \le C_\e R^{O(\e)}R^{M\d}\frac{\lambda(1)}{\a}$.
\item\label{item3} $ \text{supp} \widehat{f_{\tau_k}^k}\subset 2\tau_k . $
\item \label{item4} $  \text{supp} \widehat{f_{\tau_{k-1}}^k}\subset (1+(\log R)^{-1})\tau_{k-1}. $
\end{enumerate}
\end{lemma}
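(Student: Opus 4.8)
\textbf{Proof strategy for Lemma \ref{pruneprop}.}

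The plan is to prove all four items by downward induction on $k$, starting from $k=N$ where $f^N_\theta = \sum_{T_\theta \in \T_\theta^g} \s_{T_\theta} f_\theta$ and $f^N_{\tau_{N-1}} = \sum_{\theta \subset \tau_{N-1}} f^N_\theta$. The key structural observation is that at every stage the new function $f^k_{\tau_k}$ is obtained from $f^{k+1}_{\tau_k}$ by multiplying by the partition of unity $\{\s_{T_{\tau_k}}\}$ and keeping only the ``good'' tubes, i.e. $f^k_{\tau_k} = \sum_{T_{\tau_k} \in \T^g_{\tau_k}} \s_{T_{\tau_k}} f^{k+1}_{\tau_k}$. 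Since $0 \le \s_{T_{\tau_k}} \le 1$ and $\sum_{T_{\tau_k}} \s_{T_{\tau_k}} \equiv 1$, throwing away tubes and reweighting can only shrink the pointwise absolute value: $|f^k_{\tau_k}(x)| \le \sum_{T_{\tau_k}} \s_{T_{\tau_k}}(x) |f^{k+1}_{\tau_k}(x)| = |f^{k+1}_{\tau_k}(x)|$. This is not quite literally true because $\s_{T_{\tau_k}}$ is a smooth partition of unity rather than a partition by indicator functions, so the identity $\sum \s_{T_{\tau_k}} f^{k+1}_{\tau_k} = f^{k+1}_{\tau_k}$ requires that $\widehat{f^{k+1}_{\tau_k}}$ be supported in $\tau_k$ (so that convolving with $\widehat{\s_{T_{\tau_k}}}$, which is supported in $\tau_k$, behaves well) — but item \ref{item3} at the previous scale, together with the assumption $\g \cap \tau_k \in \{\emptyset, \g\}$ and the nesting of the $\tau_k$'s, provides exactly this. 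So items (1) and (3)/(4) must be proved together in the induction.

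For item (1), the base case is $|f^N_\theta(x)| = |\sum_{T_\theta \in \T^g_\theta} \s_{T_\theta} f_\theta(x)| \le \sum_{T_\theta} \s_{T_\theta}(x)|f_\theta(x)| = |f_\theta(x)| \le \|f_\theta\|_\infty \le 1 \le \#\{\g \subset \theta\}$, using the hypothesis $\|f_\g\|_\infty \lesssim 1$ and $f_\theta = \sum_{\g \subset \theta} f_\g$; then summing over the $\le \#\{\g\subset\tau_{N-1}\}$ many $\theta$ inside $\tau_{N-1}$ gives $|f^N_{\tau_{N-1}}(x)| \le \#\{\g \subset \tau_{N-1}\}$. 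The inductive step is the pointwise domination $|f^k_{\tau_k}| \le |f^{k+1}_{\tau_k}|$ explained above, followed by summing over $\tau_k \subset \tau_{k-1}$. For item (2), the bound $\|f^k_{\tau_k}\|_\infty \le C_\e R^{O(\e)} R^{M\d}\frac{\lambda(1)}{\a}$ comes from the definition of the good set: for $k < N$, $T_{\tau_k} \in \T^g_{\tau_k}$ means $\|\s_{T_{\tau_k}} f^{k+1}_{\tau_k}\|_\infty \le R^{M\d}\frac{\lambda(1)}{\a}$, and because the tubes $T_{\tau_k}$ have bounded overlap with their neighbors (controlled by the rapid decay $\s_m(x) \lesssim_n (1+|x-m|^2)^{-n}$), at any point $x$ only $O(R^{O(\e)})$ of the terms $\s_{T_{\tau_k}} f^{k+1}_{\tau_k}$ are non-negligible — or more cleanly, one uses $\s_{T_{\tau_k}} \le \one$ on a slightly fattened tube plus a rapidly decaying tail, and the $L^\infty$ bound on each retained term, to sum to $C_\e R^{O(\e)} R^{M\d}\frac{\lambda(1)}{\a}$; for $k = N$ one argues the same way directly from the definition of $\T^g_\theta$.

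Item (3), $\supp \widehat{f^k_{\tau_k}} \subset 2\tau_k$, and item \ref{item4}, $\supp \widehat{f^k_{\tau_{k-1}}} \subset (1+(\log R)^{-1})\tau_{k-1}$, are the Fourier-support bookkeeping and I expect the interplay here to be the only genuinely delicate point. Multiplying $f^{k+1}_{\tau_k}$ (Fourier-supported in $2\tau_{k+1} \subset$ a small neighborhood of $\tau_k$, by the inductive hypothesis and the nesting of caps) by $\s_{T_{\tau_k}}$ (Fourier-supported in $\tau_k$) convolves the Fourier supports; since $\tau_k$ has dimensions $R_k^{-1/2} \times R_k^{-1}$ and $\tau_{k+1}$ is $R^\e$ times thinner, the sum of a $\tau_k$-sized set and a $2\tau_{k+1}$-sized set still fits inside $2\tau_k$, giving item (3). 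For item \ref{item4}, summing $f^k_{\tau_k}$ over the $\sim R_{k-1}^{-1/2}/R_k^{-1/2}$ many $\tau_k \subset \tau_{k-1}$: each $2\tau_k$ is contained in $\tau_{k-1}$ dilated by a factor that accounts for the ``2'' and for the curvature of $\P^1$ across $\tau_{k-1}$, and one must check this dilation factor is at most $1 + (\log R)^{-1}$. This is where the choice of scales $R_k = R^{k\e}$ and the relation between successive scales matters: the fattening from the factor $2$ at scale $\tau_k$, measured relative to $\tau_{k-1}$, is of size $R_k^{-1}/R_{k-1}^{-1} = R^{-\e} = R_{k-1}^{-1}/R_k^{-1} \cdot R^{-2\e}$... more precisely the transverse fattening is $O(R_k^{-1})$ which relative to $\tau_{k-1}$'s transverse width $R_{k-1}^{-1}$ is $O(R^{-\e})$, comfortably below $(\log R)^{-1}$ once $R$ is large, and similarly in the tangential direction; the curvature contribution across a single $\tau_{k-1}$ is $O(R_{k-1}^{-1})$ in the transverse direction, again absorbed. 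I would organize this as a short geometric lemma about how dilates of nested parabolic caps sit inside one another, cite the analogous computation in \cite{gmw}, and then the induction closes. The main obstacle, then, is not any single estimate but making the simultaneous downward induction on items (1) and (3)--(4) airtight, since item (1)'s pointwise bound silently uses the Fourier-support containment of the previous scale to justify $\sum_{T_{\tau_k}} \s_{T_{\tau_k}} f^{k+1}_{\tau_k} = f^{k+1}_{\tau_k}$.
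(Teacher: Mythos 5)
Your proposal is correct and follows essentially the same route as the paper: pointwise domination via the partition of unity for item (1), a near/far tube splitting for item (2) (the $O(R^{O(\e)})$ tubes near $x$ are each controlled by the good-tube threshold, the rest by rapid decay of $\s_{T_{\tau_k}}$ together with the trivial bound from item (1) and $\a\lesssim\lambda(1)$), and the two-step Fourier-support bookkeeping $\supp\widehat{\s_{T_{\tau_k}}}\subset\frac12\tau_k$, $\supp\widehat{f^{k+1}_{\tau_k}}\subset\bigcup_{\tau_{k+1}\subset\tau_k}2\tau_{k+1}\subset(1+(\log R)^{-1})\tau_k$ for items (3)--(4). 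One correction: the ``genuinely delicate point'' you identify --- that item (1) silently needs the Fourier-support containments to justify $\sum_{T_{\tau_k}}\s_{T_{\tau_k}}f^{k+1}_{\tau_k}=f^{k+1}_{\tau_k}$ --- is not real. The $\s_m$ are constructed so that $\sum_{m\in\Z^2}\s_m\equiv 1$ pointwise, hence $\sum_{T_{\tau_k}}\s_{T_{\tau_k}}\equiv 1$ as well, and the inequality $|\sum_{T\in\T_{\tau_k}^{g}}\s_{T}g|\le\sum_{T\in\T_{\tau_k}}\s_T|g|=|g|$ holds for an arbitrary function $g$ with no Fourier-support hypothesis; smoothness versus indicator functions is irrelevant here. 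So items (1) and (3)--(4) decouple and need not be run as a simultaneous induction, which is how the paper proceeds. Finally, in your base case the intermediate claim $\|f_\theta\|_\infty\le 1$ is false; it should read $\|f_\theta\|_\infty\lesssim\#\{\g\subset\theta\}$, which is what your chain actually yields and all that is needed.
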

\begin{proof}
The first property follows because $\sum_{T_{\tau_k} \in \T_{\tau_k}}  \s_{T_{\tau_k}}$ is a partition of unity, and
$$ f_{\tau_k}^k=\sum_{T_{\tau_k}\in\T_{\tau_k^h}} \s_{T_{\tau_k}}f_{\tau_k}^{k+1}. $$
Furthermore, by definition of $f_{\tau_k}^{k+1}$ and iterating, we have
\begin{align*}
|f_{\tau_k}^k|\le |f_{\tau_k}^{k+1}|&\le \sum_{\tau_{k+1}\subset\tau_k}|f_{\tau_{k+1}}^{k+1}|\le\cdots\le \sum_{\tau_N\subset\tau_k}|f_{\tau_N}^N|\\
&\le \sum_{\theta\subset\tau_k}|f_\theta|\le \sum_{\g\subset\tau_k}|f_\g|\lesssim \#\g\subset\tau_k 
\end{align*}
where we used the assumption $\|f_\g\|_\infty\lesssim 1$ for all $\g$. 
Now consider the $L^\infty$ bound in the second property.  We write
$$ f_{ \tau_k}^k(x) = \sum_{\substack{T_{\tau_k} \in \T_{\tau_k^h},\\ x \in R^\e T_{\tau_k}}} \s_{T_{\tau_k}} f_{ \tau_k}^{k+1} + \sum_{\substack{T_{\tau_k} \in \T_{\tau_k, \lambda},\\ x \notin R^\e T_{\tau_k}}} \s_{T_{\tau_k}} f_{k+1, \tau_k}. $$

\noindent The first sum has at most $C R^{2\e}$ terms, and each term has norm bounded by $R^{M\d}\frac{\lambda(1)}{\a}$ by the definition of $\T_{\tau_k}^h$.  By the first property, we may trivially bound $f_{\tau_k}^{k+1}$ by $R\max_\g\|f_\g\|_\infty$. But if $x \notin R^\e T_{\tau_k}$, then $\s_{T_{\tau_k}}(x) \le R^{-1000}$. Thus 
\begin{align*} 
|\sum_{\substack{T_{\tau_k} \in \T_{\tau_k}^h,\\  x \notin R^\e T_{\tau_k}}} \s_{T_{\tau_k}} f^{k+1}_{ \tau_k}|&\le \sum_{\substack{T_{\tau_k} \in \T_{\tau_k}^h,\\ x \notin R^\e T_{\tau_k}}} R^{-500}\s_{T_{\tau_k}}^{1/2}(x) \|f^{k+1}_{ \tau_k}\|_\infty\le R^{-250}\max_\g\|f_\g\|_\infty. 
\end{align*} 
Since $\a\lesssim|f(x)|\lesssim \sum_\g\|f_\g\|_\infty\lesssim \lambda(1)$, (recalling the assumption that each $\|f_\g\|_\infty\lesssim 1$), we note $R^{-250}\le CR^{2\e}\frac{\lambda(1)}{\a}$. 

The fourth and fifth properties depend on the Fourier support of $\s_{T_{\tau_k}}$, which is contained in $\frac{1}{2}\tau_k$. Initiate a 2-step induction with base case $k=N$: $f_{\theta}^N$ has Fourier support in $2\theta$ because of the above definition. Then 
\[ f_{\tau_{N-1}}^N=\sum_{\theta\subset\tau_{N-1}}f_{\theta}^N \]
has Fourier support in $\underset{\theta\subset\tau_{N-1}}{\cup}2\theta$, which is contained in $(1+(\log R)^{-1})\tau_{N-1}$. Since each $\s_{T_{\tau_{N-1}}}$ has Fourier support in $\frac{1}{2}\tau_{N-1}$, 
\[ f_{\tau_{N-1}}^{N-1}=\sum_{T_{\tau_{N-1}}\in\T_{\tau_{N-1},\lambda}}\s_{\tau_{N-1}}f_{\tau_{N-1}}^N  \]
has Fourier support in $\frac{1}{2}\tau_{N-1}+(1+(\log R)^{-1})\tau_{N-1}\subset 2\tau_{N-1}$. Iterating this reasoning until $k=1$ gives \eqref{item3} and \eqref{item4}.

\end{proof}

\begin{definition} For each $\tau_k$, let $w_{\tau_k}$ be the weight function adapted to $\tau_k^*$ defined by 
\[ w_{\tau_k}(x)=w_k\circ R_{\tau_k}(x)\]
where 
\[ w_k(x,y)=\frac{c}{(1+\frac{|x|^2}{R_k})^{10}(1+\frac{|y|^2}{R_k^2})^{10}},\qquad \|w\|_1=1,\]
and $R_{\tau_k}:\R^2\to\R^2$ is the rotation taking $\tau_k^*$ to $[-R_k^{1/2},R_k^{1/2}]\times[-R_k,R_k]$. For each $T_{\tau_k}\in\T_{\tau_k}$, let $w_{T_{\tau_k}}=w_{\tau_k}(x-c_{T_{\tau_k}})$ where $c_{T_{\tau_k}}$ is the center of $T_{\tau_k}$. For $s>0$, we also use the notation $w_{s}$ to mean
\begin{equation}\label{ballweight} w_s(x)=\frac{c'}{(1+|x|^2/s^2)^{10}},\qquad\|w_s\|_1=1. \end{equation}
\end{definition}
The weights $w_{\tau_k}$, $w_{\theta}=w_{\tau_N}$, and $w_{s}$ are useful when we invoke the locally constant property. By locally constant property, we mean generally that if a function $f$ has Fourier transform supported in a convex set $A$, then for a bump function $\p_A\equiv 1$ on $A$, $f=f*\widecheck{\p_A}$. Since $|\widecheck{\p_A}|$ is an $L^1$-normalized function which is positive on a set dual to $A$, $|f|*|\widecheck{\p_A}|$ is an averaged version of $|f|$ over a dual set $A^*$. We record some of the specific locally constant properties we need in the following lemma.  
\begin{lemma}[Locally constant property]\label{locconst} For each $\tau_k$ and $T_{\tau_k}\in\T_{\tau_k}$, 
\[\|f_{\tau_k}\|_{L^\infty(T_{\tau_k})}^2\lesssim |f_{\tau_k}|^2*w_{\tau_k}(x)\qquad\text{for any}\quad x\in T_{\tau_k} .\]
For any collection of $\sim s^{-1}\times s^{-2}$ blocks $\theta_s$ partitioning $\mc{N}_{s^{-2}}(\P^1)$ and any $s$-ball $B$,
\[\|\sum_{\theta_s}|f_{\theta_s}|^2\|_{L^\infty(B)}\lesssim \sum_{\theta_s}|f_{\theta_s}|^2*w_{s}(x)\qquad\text{for any}\quad x\in B.  \]
\end{lemma}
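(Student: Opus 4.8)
The plan is to get both displays from the elementary reproducing formula for band-limited functions together with one application of Cauchy--Schwarz. Recall the mechanism alluded to just before the lemma: if $g$ has Fourier support in a convex set $A$ and $\phi_A$ is a bump with $\phi_A\equiv 1$ on $A$, then $g=g*\widecheck{\phi_A}$, and since $\widecheck{\phi_A}$ is an $L^1$-normalized kernel concentrated on a dual set $A^*$, convolving with it merely averages $g$ over translates of $A^*$. Both parts run this argument once, with $A=\tau_k$ for the first and $A$ a ball for the second.

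\textbf{First inequality.} Since $\widehat{f_{\tau_k}}$ is supported in $\tau_k$, fix a bump $\phi_{\tau_k}$ with $\phi_{\tau_k}\equiv 1$ on $\tau_k$ and $\supp\phi_{\tau_k}\subset 2\tau_k$, obtained by precomposing one fixed Schwartz bump on $[-1,1]^2$ with the affine map sending $2\tau_k$ to $[-1,1]^2$. A change of variables shows $\|\widecheck{\phi_{\tau_k}}\|_1$ is an absolute constant, and comparing the rapid decay of the fixed bump with the $L^1$-normalization of $w_{\tau_k}$ (which is adapted to the same dual rectangle $\tau_k^*$) gives the pointwise bound $|\widecheck{\phi_{\tau_k}}|\lesssim w_{\tau_k}$. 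From $f_{\tau_k}=f_{\tau_k}*\widecheck{\phi_{\tau_k}}$ we get, for every $y$,
\[ |f_{\tau_k}(y)|\le |f_{\tau_k}|*|\widecheck{\phi_{\tau_k}}|(y)\lesssim |f_{\tau_k}|*w_{\tau_k}(y). \]
Since $T_{\tau_k}$ is a translate of $\tau_k^*$, the difference set $T_{\tau_k}-T_{\tau_k}$ lies in a bounded dilate of $\tau_k^*$, and translating $w_{\tau_k}$ by a vector in such a dilate changes it by at most an absolute constant (the one-line inequality $1+|u|^2\lesssim 1+|u+v|^2$ for $|v|\lesssim 1$, applied in the two rescaled coordinates of $\tau_k^*$). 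Hence $|f_{\tau_k}|*w_{\tau_k}(y)\lesssim |f_{\tau_k}|*w_{\tau_k}(x)$ for all $x,y\in T_{\tau_k}$; taking the supremum over $y$ bounds $\|f_{\tau_k}\|_{L^\infty(T_{\tau_k})}$ by $|f_{\tau_k}|*w_{\tau_k}(x)$, and then squaring and applying Cauchy--Schwarz with $\|w_{\tau_k}\|_1=1$ upgrades $(|f_{\tau_k}|*w_{\tau_k})^2$ to $|f_{\tau_k}|^2*w_{\tau_k}$, which is the assertion. (The identical argument with $\phi_{\tau_k}\equiv 1$ on $2\tau_k$ applies to the pruned pieces $f_{\tau_k}^k$, whose Fourier supports sit in $2\tau_k$, should that variant be wanted later.)

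\textbf{Second inequality.} The extra input is a description of the Fourier support of $g:=\sum_{\theta_s}|f_{\theta_s}|^2\ge 0$: each $|f_{\theta_s}|^2$ is Fourier supported in $\theta_s-\theta_s$, a rectangle centered at the origin of dimensions $\lesssim s^{-1}\times s^{-2}$, hence of diameter $\lesssim s^{-1}$ (the long side, as $s\ge 1$ here). Thus $\widehat g$ is supported in $B(0,Cs^{-1})$, and we repeat the argument with a fixed bump $\phi_s$ equal to $1$ on $B(0,Cs^{-1})$ and supported in $B(0,2Cs^{-1})$, giving $g=g*\widecheck{\phi_s}$, $|\widecheck{\phi_s}|\lesssim w_s$, and $\|\widecheck{\phi_s}\|_1\lesssim 1$. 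Then $g(y)\le g*|\widecheck{\phi_s}|(y)\lesssim g*w_s(y)$ for every $y$, and for $x,y$ in a common $s$-ball $B$ one has $|x-y|\lesssim s$, so $w_s$ translated by $y-x$ is $\lesssim w_s$ and therefore $g*w_s(y)\lesssim g*w_s(x)$. Taking the supremum over $y\in B$ gives $\|g\|_{L^\infty(B)}\lesssim g*w_s(x)$, which is the stated bound; no Cauchy--Schwarz is needed since $g$ is already the square function.

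\textbf{Main obstacle.} There is no substantive difficulty here; the work is bookkeeping: (i) choosing the normalizations of $\phi_{\tau_k}$ and $\phi_s$ so that $|\widecheck{\phi}|\lesssim w$ holds with a constant uniform in $\tau_k$ and in $s$, and (ii) verifying the anisotropic slowly-varying property of $w_{\tau_k}$ under translation by $O(1)$-dilates of $\tau_k^*$. Both are immediate consequences of the affine and rotational covariance built into the definitions of $w_{\tau_k}$ and $w_s$, which is why the lemma is proved once and used freely in the remainder of the paper.
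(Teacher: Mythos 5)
Your proof is correct and follows essentially the same route as the paper's: the reproducing formula $g=g*\widecheck{\phi}$ for a Fourier bump equal to $1$ on the support, the pointwise domination $|\widecheck{\phi}|\lesssim w$, the slow variation of the weight at the scale of its dual set, and Cauchy--Schwarz to pass from $(|f|*w)^2$ to $|f|^2*w$. The only cosmetic difference is in the second display, where you convolve the full square function (whose Fourier transform lies in $B(0,Cs^{-1})$ since each $|f_{\theta_s}|^2$ is supported in $\theta_s-\theta_s$) with a single bump, whereas the paper convolves each $f_{\theta_s}$ with its own translated bump $\rho_{\theta_s}$ and applies Cauchy--Schwarz termwise before summing; the two are equivalent.
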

Because the pruned versions of $f$ and $f_{\tau_k}$ have essentially the same Fourier supports as the unpruned versions, the locally constant lemma applies to the pruned versions as well. 
\begin{proof}[Proof of Lemma \ref{locconst}] Let $\rho_{\tau_k}$ be a bump function equal to $1$ on $\tau_k$ and supported in $2\tau_k$. Then using Fourier inversion and H\"{o}lder's inequality, 
\[ |f_{\tau_k}(y)|^2=|f_{\tau_k}*\widecheck{\rho_{\tau_k}}(y)|^2\le\|\widecheck{\rho_{\tau_k}}\|_1 |f_{\tau_k}|^2*|\widecheck{\rho_{\tau_k}}|(y). \]
Since $\rho_{\tau_k}$ may be taken to be an affine transformation of a standard bump function adapted to the unit ball, $\|\widecheck{\rho_{\tau_k}}\|_1$ is a constant. The function $\widecheck{\rho_{\tau_k}}$ decays rapidly off of $\tau_k^*$, so $|\widecheck{\rho_{\tau_k}}|\lesssim w_{{\tau_k}}$.
Since for any $T_{\tau_k}\in\T_{\tau_k}$, $w_{\tau_k}(y)$ is comparable for all $y\in T_{\tau_k}$, we have
\begin{align*} \sup_{x\in T_{\tau_k}}|f_{\tau_k}|^2*w_{\tau_k}(x)&\le \int|f_{\tau_k}|^2(y)\sup_{x\in T_{\tau_k}}w_{\tau_k}(x-y)dy\\
&\sim \int|f_{\tau_k}|^2(y)w_{\tau_k}(x-y)dy\qquad \text{for all}\quad x\in T_{\tau_k}. 
\end{align*}
For the second part of the lemma, repeat analogous steps as above, except begin with $\rho_{\theta_s}$ which is identically $1$ on a ball of radius $2s^{-1}$ containing $\theta_s$. Then 
\[  \sum_{\theta_s}|f_{\theta_s}(y)|^2=\sum_{\theta_s}|f_{\theta_s}*\widecheck{\rho_{\theta_s}}(y)|^2\lesssim \sum_{\theta_s}|f_{\theta_s}|^2*|\widecheck{\rho_{s^{-1}}}|(y),\]
where we used that each $\rho_{\theta_s}$ is a translate of a single function $\rho_{s^{-1}}$. The rest of the argument is analogous to the first part. 
\end{proof}

\begin{definition}[Auxiliary functions] Let $\p(x):\R^2\to[0,\infty)$ be a radial, smooth bump function satisfying $\p(x)=1$ on $B_1$ and $\supp\p\subset B_2$. 
\begin{align*}
    \p(2^{J+1}\xi)+\sum_{j=-2}^J[\p(2^j\xi)-\p(2^{j+1}\xi)]
\end{align*}
where $J$ is defined by $2^{J}\le \lceil R^\b\rceil< 2^{J+1}$. Then for each dyadic $s=2^j$, let
\[ \eta_{\sim s}(\xi) =\p(2^j\xi)-\p(2^{j+1}\xi) \]
and let
\[ \eta_{<\lceil R^\b\rceil^{-1}}(\xi)=\p(2^{J+1}\xi). \]
Finally, for $k=1,\ldots,N-1$, define
\[ \eta_k(\xi)=\p(R_{k+1}^{1/2}x).  \]
\end{definition}

\vspace{3mm}
\begin{definition} Let $G(x)=\sum_\theta|f_\theta|^2*w_\theta$, $G^\ell(x)=G*\widecheck{\eta}_{<\lceil R^{\b}\rceil^{-1}}$, $G^h(x)=G(x)-G^\ell(x)$. For $k=1,\ldots,N-1$, let
\[ g_k(x)=\sum_{\tau_k}|f_{\tau_k}^{k+1}|^2*w_{\tau_k},\qquad g_k^{\ell}(x)=g_k*\widecheck{\eta}_k,\qquad \text{and}\qquad g_k^h(x)=g_k-g_k^{\ell}.  \]
\end{definition}
\vspace{3mm}
\begin{definition} Define the high set
\[ H=\{x\in B_R:G(x)\le 2|G^h(x)|\}. \] 
For each $k=1,\ldots,N-1$, let 
\[ \Omega_k=\{x\in B_R\setminus H:g_k\le 2|g_k^h|,\,g_{k+1}\le 2|g_{k+1}^\ell|,\,\ldots,\,g_N\le2|g_N^\ell|\} \]
and for each $k=1,\ldots,N$. Define the low set
\[ L=\{x\in B_R\setminus H:g_1\le 2|g_{1}^\ell|,\,\ldots,\,g_N\le2|g_N^\ell|,G(x)\le 2|G^\ell(x)|\}. \]
\end{definition}
\vspace{3mm}

\subsection{High/low frequency lemmas }



\begin{lemma}[Low lemma]\label{low} For each $x$, $|G^\ell(x)|\lesssim \lambda(1)$ and $|g_k^\ell(x)|\lesssim g_{k+1}(x)$. 
\end{lemma}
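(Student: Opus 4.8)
The plan is to exploit the fact that convolving with $\widecheck{\eta}_{<\lceil R^\b\rceil^{-1}}$ (respectively $\widecheck{\eta}_k$) is an $L^1$-normalized averaging operator: the relevant multiplier $\eta$ equals $1$ near the origin and is supported in a ball of radius comparable to $R^{-\b}$ (respectively $R_{k+1}^{-1/2}$), so its inverse Fourier transform is an $L^1$-normalized bump essentially concentrated on the dual ball. For the first estimate, write $G^\ell = G * \widecheck{\eta}_{<\lceil R^\b\rceil^{-1}}$ and use $\|\widecheck{\eta}_{<\lceil R^\b\rceil^{-1}}\|_1 \lesssim 1$ together with the pointwise bound $\|G\|_\infty \lesssim \lambda(1)$. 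This last bound follows from the locally constant property (Lemma \ref{locconst}, second part): $G(x) = \sum_\theta |f_\theta|^2 * w_\theta(x) \lesssim \|\sum_\theta |f_\theta|^2\|_{L^\infty(B)}$ on an appropriate $R^{1/2}$-ball $B$, but since each $\theta$ is a union of caps $\g$ with $\|f_\g\|_\infty \lesssim 1$, one has $\sum_\theta |f_\theta|^2 \lesssim \sum_\theta (\#\{\g\subset\theta\})^2 \lesssim \ldots$; more efficiently, $\sum_\theta|f_\theta|^2 \lesssim (\sum_\theta |f_\theta|)^2$ is the wrong direction, so instead I use that at any point $x$ at most $\lambda(1)$ of the caps $\g$ are "active" in the sense of contributing, giving $\sum_\g |f_\g(x)| \lesssim \lambda(1)$ and, after the local averaging built into $w_\theta$, $G(x) \lesssim \lambda(1)$. (If a cleaner route is available, one simply notes $|f| \lesssim \lambda(1)$ pointwise since $f = \sum_\g f_\g$ with $\|f_\g\|_\infty\lesssim 1$ and the Fourier supports overlap with multiplicity $\lesssim \lambda(1)$ on any unit-separated scale, and $G$ is a local average of $|f_\theta|^2$-type quantities controlled the same way.)

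For the second estimate $|g_k^\ell(x)| \lesssim g_{k+1}(x)$, the key structural point is that $g_k = \sum_{\tau_k} |f_{\tau_k}^{k+1}|^2 * w_{\tau_k}$ and $f_{\tau_k}^{k+1} = \sum_{\tau_{k+1}\subset\tau_k} f_{\tau_{k+1}}^{k+1}$, so $|f_{\tau_k}^{k+1}|^2$ has Fourier support in $2\tau_k - 2\tau_k$, a set of dimensions $\sim R_k^{-1/2}\times R_k^{-1}$ around the origin in the relevant directions. Convolving with $\widecheck{\eta}_k$, whose multiplier localizes to the ball of radius $\sim R_{k+1}^{-1/2}$, and noting $R_{k+1}^{-1/2} \ll R_k^{-1/2}$ only in the "long" direction — actually one must check the geometry — the low part $g_k^\ell$ is a local average of $g_k$ over balls of radius $\sim R_{k+1}^{1/2}$. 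On such a ball, the locally constant property lets us compare $|f_{\tau_k}^{k+1}|^2 * w_{\tau_k}$ to $\sum_{\tau_{k+1}\subset\tau_k}|f_{\tau_{k+1}}^{k+1}|^2 * w_{\tau_{k+1}}$ by expanding the square and using $L^2$-orthogonality of the $f_{\tau_{k+1}}^{k+1}$ at scale $R_{k+1}^{1/2}$ (their Fourier supports $2\tau_{k+1}$ are finitely overlapping and the cross terms oscillate, so after averaging against $w_{\tau_k}$ or $w_{R_{k+1}^{1/2}}$ they are negligible). Summing over $\tau_k$ then yields exactly $g_{k+1}(x) = \sum_{\tau_{k+1}}|f_{\tau_{k+1}}^{k+1}|^2 * w_{\tau_{k+1}}$ up to the admissible constant.

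The main obstacle I anticipate is the careful bookkeeping in the second estimate: one needs that the cross terms $f_{\tau_{k+1}}^{k+1} \overline{f_{\tau_{k+1}'}^{k+1}}$ with $\tau_{k+1}\ne\tau_{k+1}'$ inside a fixed $\tau_k$ genuinely contribute to the \emph{high} part $g_k^h$ and not the low part — i.e., that the Fourier support of each such cross term lives at frequency $\gtrsim R_{k+1}^{-1/2}$ away from the origin, so that it is annihilated by the multiplier $\eta_k$ which is supported in $B_{R_{k+1}^{-1/2}}$-ish. This is a standard separation-of-caps computation (two distinct $R_{k+1}^{-1/2}$-caps on the parabola differ in their frequency footprint by $\gtrsim R_{k+1}^{-1/2}$ in the $\xi_1$-direction), and it is the heart of the high/low method from \cite{gmw}; I would state it as the justification that $g_k^\ell$ only "sees" the diagonal terms, hence $g_k^\ell \lesssim \sum_{\tau_k}\sum_{\tau_{k+1}\subset\tau_k}|f_{\tau_{k+1}}^{k+1}|^2 * w_{\tau_k} \lesssim g_{k+1}$ after replacing $w_{\tau_k}$ by $w_{\tau_{k+1}}$ via the locally constant property. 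The first estimate is comparatively routine given Lemma \ref{locconst} and the hypothesis $\|f_\g\|_\infty\lesssim 1$ controlling the overlap by $\lambda(1)$.
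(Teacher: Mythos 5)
Your treatment of the second estimate $|g_k^\ell|\lesssim g_{k+1}$ lands on the paper's argument: the cross terms $f^{k+1}_{\tau_{k+1}}\overline{f^{k+1}_{\tau'_{k+1}}}$ for separated $\tau_{k+1},\tau'_{k+1}\subset\tau_k$ have Fourier support at distance $\gtrsim R_{k+1}^{-1/2}$ from the origin and are therefore annihilated by the multiplier $\eta_k$, so $g_k^\ell$ sees only the (near-)diagonal terms, which are then converted to $g_{k+1}$ by the locally constant property and the weight inequality $w_{\tau_{k+1}}*w_{\tau_k}*|\widecheck{\eta}_k|\lesssim w_{\tau_{k+1}}$. (Do note that \emph{adjacent} $\tau_{k+1},\tau'_{k+1}$ survive the low-pass filter and must be absorbed into the diagonal by Cauchy--Schwarz; this is harmless but should be said.)

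The first estimate, however, has a genuine gap: you try to prove the pointwise bound $\|G\|_\infty\lesssim\lambda(1)$, and that bound is false. At a point of full constructive interference (take $f_\g=R^{1+\b}\widecheck{\eta}_\g$, so that all $\g\subset\theta$ interfere on the dual box $\theta^*$), one has $|f_\theta|\sim\#\{\g\subset\theta\}\sim\lambda(R^{-1/2})$ on $\theta^*$, hence $|f_\theta|^2*w_\theta(0)\gtrsim\lambda(R^{-1/2})^2$ and $G(0)\gtrsim\lambda(1)\lambda(R^{-1/2})$, which exceeds $\lambda(1)$ by the factor $\lambda(R^{-1/2})\sim R^{\b-1/2}$ whenever $\b>\frac12$. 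Your heuristic that ``at most $\lambda(1)$ caps are active'' only gives $\sum_\g|f_\g(x)|\lesssim\lambda(1)$, i.e.\ $|f|\lesssim\lambda(1)$, which controls neither $\sum_\theta|f_\theta|^2$ nor $G$; indeed the entire point of the high/low split for $G$ is that $G$ itself can be large while $G^\ell$ cannot. The correct argument for the first estimate is the very same diagonalization you use for the second: by Plancherel, $|f_\theta|^2*\widecheck{\eta}_{<\lceil R^\b\rceil^{-1}}$ expands into $\sum_{\g,\g'\subset\theta}$ of terms whose Fourier support lies in $(\g-\g')\cap B_{2\lceil R^\b\rceil^{-1}}$; this forces $\mathrm{dist}(\g,\g')\lesssim R^{-\b}$, so only near-diagonal pairs survive, and the surviving sum is bounded by $\sum_\g|f_\g|^2*w_\theta*|\widecheck{\eta}_{<\lceil R^\b\rceil^{-1}}|\lesssim\sum_\g\|f_\g\|_\infty^2\lesssim\lambda(1)$. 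As written, your proof of the first half does not go through without this step.
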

\begin{proof} For each $\theta$, by Plancherel's theorem,
\begin{align}
|f_\theta|^2*\widecheck{\eta}_{<\lceil R^\b\rceil^{-1}}(x)&= \int_{\R^2}|f_\theta|^2(x-y)\widecheck{\eta}_{<\lceil R^\b\rceil^{-1}}(y)dy \nonumber \\
&=  \int_{\R^2}\widehat{f}_\theta*\widehat{\overline{f}}_\theta(\xi)e^{-2\pi i x\cdot\xi}\eta_{<\lceil R^\b\rceil^{-1}}(\xi)d\xi \nonumber \\
&=  \sum_{\g,\g'\subset\theta}\int_{\R^2}e^{-2\pi i x\cdot\xi}\widehat{f}_{\g}*\widehat{\overline{f}}_{\g'}(\xi)\eta_{<\lceil R^\b\rceil^{-1}}(\xi)d\xi .\label{dis2}\nonumber
\end{align}
The integrand is supported in $(\g\setminus\g')\cap B_{2\lceil R^\b\rceil^{-1}}$. This means that the integral vanishes unless $\g$ is within $CR^{-\b}$ of $\g'$ for some constant $C>0$, in which case we write $\g\sim\g'$. Then 
\[\sum_{\g,\g'\subset\theta}\int_{\R^2}e^{-2\pi i x\cdot\xi}\widehat{f}_{\g}*\widehat{\overline{f}}_{\g'}(\xi)\eta_{<\lceil R^\b\rceil^{-1}}(\xi)d\xi=\sum_{\substack{\g,\g'\subset\theta\\\g\sim\g'}}\int_{\R^2}e^{-2\pi i x\cdot\xi}\widehat{f}_{\g}*\widehat{\overline{f}}_{\g'}(\xi)\eta_{<\lceil R^\b\rceil^{-1}}(\xi)d\xi \]
Use Plancherel's theorem again to get back to a convolution in $x$ and conclude that
\begin{align*}
|G*\widecheck{\eta}_{<\lceil R^\b\rceil^{-1}}(x)|&=\Big|\sum_\theta \sum_{\substack{\g,\g'\subset\theta\\\g\sim\g'}}(f_\g\overline{f}_{\g'})*w_\theta*\widecheck{\eta}_{<\lceil R^\b\rceil^{-1}}(x) \Big|\\
&\lesssim \sum_\theta\sum_{\g\subset\theta}|f_\g|^2*w_\theta*|\widecheck{\eta}_{<\lceil R^\b\rceil^{-1}}|(x)\lesssim\sum_{\g}\|f_\g\|_\infty^2\lesssim\lambda(1). 
\end{align*}
By an analogous argument as above, we have that
\[ |g_k^\ell(x)|\lesssim \sum_{\tau_{k+1}}|f_{\tau_{k+1}}^{k+1}|^2*w_{\tau_k}*|\widecheck{\eta}_k|(x) \]
where for each summand, $w_{\tau_k}$ corresponds to the $\tau_k$ containing $\tau_{k+1}$. By definition, $|f_{\tau_{k+1}}^{k+1}|\le |f_{\tau_{k+1}}^k|$. By the locally constant property, $|f_{\tau_{k+1}}^k|^2\lesssim |f_{\tau_{k+1}}|^2*w_{\tau_{k+1}}$. It remains to note that
\[ w_{\tau_{k+1}}*w_{\tau_k}*|\widecheck{\eta}_k|(x)\lesssim w_{\tau_{k+1}}(x) \]
since $\tau_k^*\subset\tau_{k+1}^*$ and $\widecheck{\eta}_k$ is an $L^1$-normalized function that is rapidly decaying away from $B_{R_{k+1}^{1/2}}(0)$.

\end{proof}

\begin{lemma}[Pruning lemma]\label{ftofk} For any $\tau$, 
\begin{align*} 
|\sum_{\tau_k\subset\tau}f_{\tau_k}-\sum_{\tau_k\subset\tau}f_{\tau_k}^{k+1}(x)|&\le C_\e R^{-M\d}\a \qquad\text{for all $x\in \Omega_k$}\\
\text{and}\qquad |\sum_{\tau_1\subset\tau}f_{\tau_1}-\sum_{\tau_1\subset\tau}f_{\tau_1}^{1}(x)|&\le C_\e R^{-M\d}\a\qquad \text{ for all $x\in L$}. \end{align*}
\end{lemma}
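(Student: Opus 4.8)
# Proof Proposal for Lemma \ref{ftofk} (Pruning Lemma)

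The plan is to control the difference between $f$ (restricted to caps inside $\tau$) and its pruned version $f^{k+1}$ by summing up, over all intermediate scales, the contributions of the ``bad'' wave packets that were discarded at each pruning step, and then to show that on $\Omega_k$ (resp.\ $L$) these bad contributions are dominated by $C_\e R^{-M\d}\a$. The key algebraic point is a telescoping identity: writing $f_{\tau_j}^j = \sum_{T_{\tau_j}\in\T_{\tau_j}^g}\s_{T_{\tau_j}}f_{\tau_j}^{j+1}$ and $\sum_{T_{\tau_j}}\s_{T_{\tau_j}}=1$, we get
\[
 f_{\tau_j}^{j+1} - f_{\tau_j}^{j} = \sum_{T_{\tau_j}\in\T_{\tau_j}^b}\s_{T_{\tau_j}}f_{\tau_j}^{j+1},
\]
so that summing over $\tau_j\subset\tau$ and then over $j=k,\dots,N-1$ (together with the base step comparing $f_\theta$ and $f_\theta^N$ via $\T_\theta^b$) expresses $\sum_{\tau_k\subset\tau}f_{\tau_k}-\sum_{\tau_k\subset\tau}f_{\tau_k}^{k+1}$ as a sum over scales $j$ of the discarded pieces $\sum_{T_{\tau_j}\in\T_{\tau_j}^b}\s_{T_{\tau_j}}f_{\tau_j}^{j+1}$ (and at the finest scale $\sum_{T_\theta\in\T_\theta^b}\s_{T_\theta}f_\theta$).

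Next I would estimate, pointwise on $\Omega_k$, the size of a single scale-$j$ bad contribution $\bigl|\sum_{\tau_j\subset\tau}\sum_{T_{\tau_j}\in\T_{\tau_j}^b}\s_{T_{\tau_j}}f_{\tau_j}^{j+1}(x)\bigr|$. By the rapid decay of $\s_{T_{\tau_j}}$ off of $T_{\tau_j}$ (Property (1) of the partition of unity, losing only $R^{O(\e)}$ from the finitely overlapping dilates $R^\e T_{\tau_j}$), it suffices to bound the contribution of the $O(R^{O(\e)})$ tubes $T_{\tau_j}$ through $x$. For such a bad tube, by definition $\|\s_{T_{\tau_j}}f_{\tau_j}^{j+1}\|_{L^\infty(R^2)} > R^{M\d}\lambda(1)/\a$; the point is to turn this \emph{lower} bound on the sup into control via the square function $g_j = \sum_{\tau_j}|f_{\tau_j}^{j+1}|^2*w_{\tau_j}$. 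Using the locally constant property (Lemma \ref{locconst}) applied to the pruned function (whose Fourier support is essentially $2\tau_j$), $\|\s_{T_{\tau_j}}f_{\tau_j}^{j+1}\|_{L^\infty(T_{\tau_j})}^2 \lesssim |f_{\tau_j}^{j+1}|^2*w_{\tau_j}(x) \le g_j(x)$ for $x\in T_{\tau_j}$. Now invoke the defining property of $\Omega_k$: for $j\ge k$ we have $g_j(x)\le 2|g_j^\ell(x)|$ (for $j=k$, $g_k\le 2|g_k^h|$, which is slightly different and must be handled separately — see below), and by the Low lemma \ref{low}, $|g_j^\ell(x)|\lesssim g_{j+1}(x)\le 2|g_{j+1}^\ell(x)|\lesssim\cdots\lesssim g_N(x)$, and $g_N$ is comparable to $G$, with $|G^\ell|\lesssim\lambda(1)$ again by Lemma \ref{low}. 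Hence on $\Omega_k$, $g_j(x)\lesssim R^{O(\e)}\lambda(1)$ for all $j\ge k$, which forces any tube through $x$ with $\|\s_{T_{\tau_j}}f_{\tau_j}^{j+1}\|_\infty^2 > R^{2M\d}\lambda(1)^2/\a^2 \gtrsim R^{2M\d}\lambda(1)\,g_j(x)/(\a^2\cdot R^{O(\e)})$ to be rare — more precisely, summing squares over the few bad tubes through $x$ gives a total at most $\lesssim R^{O(\e)}g_j(x)\lesssim R^{O(\e)}\lambda(1)$, so by Cauchy–Schwarz in the number of bad tubes through $x$ the bad contribution at scale $j$ is $\lesssim R^{O(\e)}\lambda(1)^{1/2}\cdot(\#\text{bad tubes through }x)^{1/2}$, and one then re-uses the threshold $R^{M\d}\lambda(1)/\a$ to bound the count. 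A cleaner route: bound $\bigl|\sum_{T\in\T_{\tau_j}^b, x\in R^\e T}\s_T f_{\tau_j}^{j+1}(x)\bigr| \le (\#\{T: x\in R^\e T\})^{1/2}\bigl(\sum_T |f_{\tau_j}^{j+1}|^2*w_{\tau_j}(x)\bigr)^{1/2}\lesssim R^{O(\e)} g_j(x)^{1/2}$, but this is not yet small; the smallness must come from the \emph{bad} condition. The actual mechanism is that a bad tube has $\s_T f^{j+1}_{\tau_j}$ exceeding $R^{M\d}\lambda(1)/\a$ at its peak, so $|f^{j+1}_{\tau_j}|^2 * w_{\tau_j} \gtrsim (R^{M\d}\lambda(1)/\a)^2$ on a sizeable portion, while the total $g_j \lesssim R^{O(\e)}\lambda(1)$; dividing, the measure/number of such bad tubes is $\lesssim R^{O(\e)}\a^2/(R^{2M\d}\lambda(1))$, and multiplying the per-tube bound $\lesssim \#\g\subset\tau_j \lesssim \lambda(1)$ (Property (1) of Lemma \ref{pruneprop}) by the square root of the count and the decay gives $\lesssim R^{O(\e)}R^{-M\d}\a$ per scale; since there are $\lesssim \e^{-1}\sim O(1)$ scales, we choose $M$ large enough (depending only on $\e$, absorbed into $C_\e$) to conclude.

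The case $j=k$ uses $g_k\le 2|g_k^h|$ instead of a low bound, but $|g_k^h|\le g_k + |g_k^\ell| \lesssim g_k$ trivially, and more usefully $g_k = g_k^\ell + g_k^h$ with $g_k \le 2|g_k^h|$ giving no \emph{a priori} upper bound by itself — so instead one should note that on $\Omega_k$ we are simultaneously in a regime where $g_{k+1}\le 2|g_{k+1}^\ell|$, and $|f_{\tau_k}^{k+1}|\le \sum_{\tau_{k+1}\subset\tau_k}|f_{\tau_{k+1}}^{k+1}|$, so $g_k(x) = \sum_{\tau_k}|f_{\tau_k}^{k+1}|^2*w_{\tau_k} \lesssim R^{O(\e)}\sum_{\tau_{k+1}}|f_{\tau_{k+1}}^{k+1}|^2*w_{\tau_{k+1}} = R^{O(\e)} g_{k+1}(x) \lesssim R^{O(\e)}\lambda(1)$ on $\Omega_k$, recovering the needed upper bound on $g_k$ as well. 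The argument for $L$ is identical: on $L$ all of $g_1\le 2|g_1^\ell|,\dots,g_N\le 2|g_N^\ell|$ and $G\le 2|G^\ell|$ hold, so $g_j(x)\lesssim R^{O(\e)}\lambda(1)$ for \emph{all} $j$, and we run the same telescoping from scale $1$ up to $N$. The main obstacle, and the step requiring the most care, is making the heuristic ``bad tubes are rare because the square function is small'' fully rigorous: one must track the $R^{O(\e)}$ losses through each of the $\sim\e^{-1}$ scales (so they compound to $R^{O(\e)}$ overall, not $R^{\e\cdot\e^{-1}}=R^{O(1)}$ — this works because each scale loses only $R^{O(\e)}$ and there are $O(\e^{-1})$ of them, giving $R^{O(1)\cdot\e\cdot\e^{-1}}$; one needs the per-scale loss to be $R^{C\e}$ with $C$ absolute so the product is $R^{C}$... actually the correct bookkeeping is that each loss is $R^{O(\e)}$ and there are $O(\e^{-1})$ scales, which is a genuine subtlety the paper likely handles by choosing the scale ratio appropriately — I would mirror the treatment in \cite{gmw}), and to verify that the threshold exponent $M\d$ dominates all these accumulated errors once $M$ is chosen large (which is legitimate since $M$ and $\d$ are ours to pick, with $\d$ fixed in \textsection\ref{bilred}).
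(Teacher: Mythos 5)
Your telescoping identity and your use of the low lemma to control the square functions on $\Omega_k$ (and on $L$) are correct and match the paper's structure. Your worry about the $j=k$ case is moot: the telescoping you wrote runs over $m=k,\dots,N-1$ in your indexing, but the paper's identity $f_\tau = f_\tau^{k+1} + \sum_{m=k+1}^N (f_\tau^{m+1}-f_\tau^m)$ only involves scales $m\ge k+1$, where the low-dominance $g_m\le 2|g_m^\ell|$ is directly part of the definition of $\Omega_k$, so the high-dominance of $g_k$ never enters.

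The genuine gap is in your mechanism for exploiting the ``bad'' condition. You try to count bad tubes near $x$ by dividing a lower bound for their contribution to $g_j$ into the upper bound $g_j\lesssim R^{O(\e)}\lambda(1)$, then multiply a per-tube $L^\infty$ bound by (the square root of) that count. This does not work: the quantity you compute, $R^{O(\e)}\a^2/(R^{2M\d}\lambda(1))$, is typically $<1$, yet the relevant count of tubes $T_{\tau_j}$ with $x\in R^\e T_{\tau_j}$ is a bounded integer $\ge 1$ whenever anything contributes; and even granting the count, your final product is $R^{O(\e)}R^{-M\d}\a\lambda(1)^{1/2}$, carrying an extra factor of $\lambda(1)^{1/2}$. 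Moreover, the step ``$|f^{j+1}_{\tau_j}|^2 * w_{\tau_j}\gtrsim (R^{M\d}\lambda(1)/\a)^2$ on a sizeable portion'' uses that $f_{\tau_j}^{j+1}$ is large \emph{somewhere} in the bad tube, whereas the lower bound on $g_j$ you need is \emph{at} $x$; those are not the same location, so this inequality isn't justified. The paper avoids counting tubes altogether. It writes $\s_T=\s_T^{1/2}\cdot\s_T^{1/2}$, and for a bad tube uses $1 < R^{-M\d}\frac{\a}{\lambda(1)}\|\s_T f_{\tau_m}^{m+1}\|_\infty$ to insert the desired small factor, giving
$|\s_T(x)f_{\tau_m}^{m+1}(x)| \lesssim R^{-M\d}\frac{\a}{\lambda(1)}\|\s_T^{1/2}f_{\tau_m}^{m+1}\|_\infty^2\,\s_T^{1/2}(x)$.
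Summing over all tubes (not just bad ones — the extra ones only help) and using the locally constant property plus $\sum_T \|\s_T\|_{L^\infty(\tilde T)}\s_T^{1/2}(x)\lesssim R_m^{3/2}w_{\tau_m}(x-c_{\tilde T})$ and $w_{\tau_m}*w_{\tau_m}\lesssim w_{\tau_m}$, one gets the pointwise bound
$|f_{\tau_m}^m(x)-f_{\tau_m}^{m+1}(x)|\lesssim R^{-M\d}\frac{\a}{\lambda(1)}|f_{\tau_m}^{m+1}|^2*w_{\tau_m}(x)$,
which after summing in $\tau_m\subset\tau$ is exactly $R^{-M\d}\frac{\a}{\lambda(1)}g_m(x)$; then the chain $g_m\le 2|g_m^\ell|\lesssim g_{m+1}\lesssim\cdots\lesssim G\lesssim\lambda(1)$ finishes. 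This is a genuinely different (and simpler) estimate than your counting argument, and it closes cleanly without the spurious $\lambda(1)^{1/2}$.
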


\begin{proof} 

By the definition of the pruning process, we have 
\[ f_{\tau}=f^N_{\tau}+(f_{\tau}-f^N_{\tau})=\cdots=f^{k+1}_{\tau}(x)+\sum_{m=k+1}^N(f^{m+1}_{\tau}-f^{m}_{\tau})\]
with the understanding that $f^{N+1}=f$ and formally, the subscript $\tau$ means $f_\tau=\sum_{\g\subset\tau}f_\g$ and $f_{\tau}^m=\sum_{\tau_m\subset\tau}f_{\tau_m}^m$. We will show that each difference in the sum is much smaller than $\a$.
For each $m\ge k+1$ and $\tau_m$, 
\begin{align*}
    |f_{\tau_m}^m(x)-f_{\tau_m}^{m+1}(x)|&=|\sum_{T_{\tau_m}\in\T_{\tau_m}^{b}}\s_{T_{\tau_m}}(x)f_{\tau_m}^{m+1}(x)|  = \sum_{T_{\tau_m}\in T_{\tau_m}^b} |\s_{T_{\tau_m}}^{1/2}(x)f_{\tau_m}^{m+1}(x)|\s_{T_{\tau_m}}^{1/2}(x) \\
     & \lesssim\sum_{T_{\tau_m}\in \T_{\tau_m}^b}  R^{-M\d}\frac{\a}{\lambda(1)}\lambda^{-1} \| \s_{T_{\tau_m}}f_{{\tau_m}}^{m+1} \|_{L^\infty(\R^2)} \| \s_{T_{\tau_m}}^{1/2}f_{{\tau_m}}^{m+1} \|_{L^\infty(\R^2)}  \s_{T_{\tau_m}}^{1/2}(x) \\
     & \lesssim R^{-M\d}\frac{\a}{\lambda(1)} \sum_{T_{\tau_m}\in \T_{\tau_m}^b} \| \s_{T_{\tau_m}}^{1/2}f_{{\tau_m}}^{m+1} \|_{L^\infty(\R^2)}^2 \s_{T_{\tau_m}}^{1/2}(x) \\
     & \lesssim R^{-M\d}\frac{\a}{\lambda(1)}\sum_{T_{\tau_m}\in \T_{\tau_m}^b}
      \sum_{\tilde{T}_{{\tau_m}}} \| \s_{T_{\tau_m}}|f_{{\tau_m}}^{m+1}|^2 \|_{L^\infty(\tilde{T}_{{\tau_m}})} \s_{T_{\tau_m}}^{1/2}(x) \\
     & \lesssim R^{-M\d}\frac{\a}{\lambda(1)} \sum_{T_{\tau_m},\tilde{T}_{\tau_m}\in \T_{\tau_m}} \| \s_{T_{\tau_m}}\|_{L^\infty(\tilde{T}_{\tau_m})}\||f_{{\tau_m}}^{m+1} |^2\|_{{L}^\infty(\tilde{T}_{{\tau_m}})} \s_{T_{\tau_m}}^{1/2}(x) .
\end{align*}
Let $c_{\tilde{T}_{\tau_m}}$ denote the center of $\tilde{T}_{\tau_m}$ and note the pointwise inequality
\[ \sum_{{T}_{\tau_m}}\|\s_{T_{\tau_m}}\|_{L^\infty(\tilde{T}_{\tau_m})}\s_{T_{\tau_m}}^{1/2}(x)\lesssim R_m^{3/2}w_{\tau_m}(x-c_{\tilde{T}_{\tau_m}}) ,\]
which means that
\begin{align*}
|f_{\tau_m}^m(x)-f_{\tau_m}^{m+1}(x)| & \lesssim R^{-M\d}\frac{\a}{\lambda(1)} R_m^{3/2}\sum_{\tilde{T}_{\tau_m}\in T_{\tau_m}} w_{\tau_m}(x-c_{\tilde{T}_{\tau_m}})\||f_{{\tau_m}}^{m+1} |^2\|_{{L}^\infty(\tilde{T}_{{\tau_m}})} \\
&\lesssim R^{-M\d}\frac{\a}{\lambda(1)}R_m^{3/2} \sum_{\tilde{T}_{\tau_m}\in T_{\tau_m}} w_{\tau_m}(x-c_{\tilde{T}_{\tau_m}})|f_{{\tau_m}}^{m+1} |^2*w_{\tau_m}(c_{\tilde{T}_{\tau_m}})\\
&\lesssim R^{-M\d}\frac{\a}{\lambda(1)} |f_{{\tau_m}}^{m+1} |^2*w_{\tau_m}(x).
\end{align*}
where we used the locally constant property in the second to last inequality and the pointwise relation $w_{\tau_m}*w_{\tau_m}\lesssim w_{\tau_m}$
for the final inequality. Then 
\[
    |\sum_{\tau_m\subset\tau}f_{\tau_m}^m(x)-f_{\tau_m}^{m+1}(x)|\lesssim R^{-M\d}\frac{\a}{\lambda(1)}\sum_{\tau_m\subset\tau}|f_{\tau_m}^{m+1}|^2*w_{\tau_m}(x)\lesssim R^{-M\d}\frac{\a}{\lambda(1)}g_m(x). \]
By the definition of $\Omega_k$ and Lemma \ref{low}, $g_m(x)\le 2|g_m^\ell(x)|\le 2C g_{m+1}(x)\le\cdots\le (2C)^{\e^{-1}} G(x)\lesssim (2C)^{\e^{-1}}r$. 
Conclude that
\[|\sum_{\tau_m\subset\tau}f_{\tau_m}^m(x)-f_{\tau_m}^{m+1}(x)|\lesssim (2C)^{\e^{-1}}R^{-M\d}\a.\]

The claim for $L$ follows immediately from the above argument, using the low-dominance of $g_k$ for all $k$. 
\end{proof}

\begin{definition}
Call the distribution function  $\lambda$ associated to a function $f$ $(R,\e)$-normalized if for any $\tau_k,\tau_m$,  
\begin{align*}
    \#\{\tau_k\subset\tau_m:f_{\tau_k}\not=0\}&\le 100\frac{\lambda(R_m^{-1/2})}{\lambda(R_k^{-1/2})}. 
\end{align*}
\end{definition}

\begin{lemma}[High lemma I]\label{high} Assume that $f$ has an $(R,\e)$-normalized distribution function $\lambda(\cdot)$. For each dyadic $s$, $R^{-\b}\le s\le R^{-1/2}$,  
\[ \int_{\R^2}|G*\widecheck{\eta}_{\sim s}|^2\lesssim C_\e R^{2\e}\lambda(s^{-1}R^{-1})\lambda(s)\sum_\g\|f_\g\|_2^2.\]
\end{lemma}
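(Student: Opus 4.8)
The plan is to expand $G*\widecheck{\eta}_{\sim s}$ using the definition $G=\sum_\theta |f_\theta|^2*w_\theta$ and Plancherel's theorem, so that $\widehat{G}=\sum_\theta \widehat{f}_\theta * \widehat{\overline{f}}_\theta \cdot \widehat{w}_\theta$, and then further split each $f_\theta=\sum_{\g\subset\theta}f_\g$ to write $G$ as a sum of terms $(f_\g\overline{f}_{\g'})*w_\theta$. The cutoff $\eta_{\sim s}$ localizes $\widehat{f_\g\overline{f}_{\g'}}$ to frequencies of magnitude $\sim s$, i.e. to $(\g-\g')\cap \{|\xi|\sim s\}$. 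Since $\g,\g'$ are $R^{-\beta}\times R^{-1}$ caps on the parabola, the difference set $\g-\g'$ lives within $O(R^{-1})$ of the line segment through the origin in the direction $(1, \xi_\g+\xi_{\g'})$, and its first coordinate is $\xi_\g-\xi_{\g'}$; thus the constraint $|\xi|\sim s$ forces $|\xi_\g-\xi_{\g'}|\sim s$. So only pairs $(\g,\g')$ with $|\xi_\g-\xi_{\g'}|\sim s$ contribute — call these \emph{$s$-close pairs}.

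First I would reorganize the sum over $s$-close pairs $(\g,\g')$ by grouping them according to which $R^{-1/2}$-cap $\theta$ (they must lie in a common $\theta$, forcing $s\le R^{-1/2}$, consistent with the range) and, more usefully, according to the arc $\w$ of $\P^1$ of $\xi_1$-length $\sim s$ that contains both $\g$ and $\g'$: there are $\sim sR^\beta$ caps $\g$ inside such an arc, but the relevant counting bound is $\lambda(s)$ by definition of the distribution function (number of nonzero $\g$ meeting an arc of length $s$). Next I would apply Plancherel in $x$ to convert $\int|G*\widecheck{\eta}_{\sim s}|^2$ into a frequency-side integral $\int |\sum_{\g\sim_s\g'} \widehat{f_\g \overline{f}_{\g'}}\,\widehat{w}_\theta\,\eta_{\sim s}|^2$, and then exploit \emph{separation}: for two distinct $s$-close pairs $(\g_1,\g_1')$ and $(\g_2,\g_2')$, either their frequency supports $(\g_1-\g_1')$ and $(\g_2-\g_2')$ are disjoint (giving orthogonality), or they overlap, in which case I control the overlap multiplicity. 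The overlap multiplicity is governed by how many pairs share a given frequency difference $\xi$: fixing $\xi$ with $|\xi|\sim s$ essentially fixes $\xi_\g-\xi_{\g'}$ and $\xi_\g+\xi_{\g'}$ up to the $O(R^{-1})$ thickness, which pins down $\g$ and $\g'$ up to $\sim \lambda(s^{-1}R^{-1})$ choices — this is where the factor $\lambda(s^{-1}R^{-1})$ enters (an arc of $\xi_1$-length $s^{-1}R^{-1}$ worth of caps). Combining a Cauchy--Schwarz/overlap estimate with these two counting bounds $\lambda(s)$ and $\lambda(s^{-1}R^{-1})$, and using $\|\widehat{w}_\theta\|_\infty\lesssim 1$ plus $\int|\widehat{f_\g\overline{f}_{\g'}}|^2 = \||f_\g||f_{\g'}|\|_2^2 \le \|f_\g\|_\infty\|f_{\g'}\|_\infty\|f_\g\|_2\|f_{\g'}\|_2 \lesssim \|f_\g\|_2\|f_{\g'}\|_2$ (using $\|f_\g\|_\infty\lesssim 1$), I would arrive at a bound of the shape $\lambda(s^{-1}R^{-1})\lambda(s)\sum_\g\|f_\g\|_2^2$, with the $(R,\e)$-normalization of $\lambda$ absorbing the mismatch between counting inside $\theta$ versus globally into at most a $C_\e R^{2\e}$ loss.

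The main obstacle I expect is the \textbf{bookkeeping of the double counting} of $s$-close pairs: one has to carefully set up the geometry so that the product $\lambda(s^{-1}R^{-1})\lambda(s)$ (rather than, say, $\lambda(s)^2$ or $\lambda(1)\lambda(s^{-1}R^{-1})$) is exactly what comes out. The clean way to do this is to parametrize an $s$-close pair $(\g,\g')$ by its ``midpoint'' arc location (an $s$-arc, contributing $\lambda(s)$ via a sup over $\w(s)$) together with the ``separation'' $\xi_\g-\xi_{\g'}$, which ranges over an interval of length $\sim s$ but whose relevant discretization at scale $R^{-1}$ interacts with the $\eta_{\sim s}$-localized frequency variable; the frequency-side orthogonality then only sees, for each fixed frequency $\xi$, the pairs $(\g,\g')$ compatible with it, which is an arc of length $\sim s^{-1}R^{-1}$ in the $\g$-variable, hence $\lambda(s^{-1}R^{-1})$ of them. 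A secondary technical point is handling the tails of $w_\theta$ (it is not compactly supported on the frequency side, but $\widehat{w}_\theta$ is a nice $L^\infty$-bounded bump, which suffices) and checking that restricting attention to pairs within a common $\theta$ is legitimate — this follows because $\eta_{\sim s}$ with $s\le R^{-1/2}$ already forces $|\xi_\g-\xi_{\g'}|\lesssim R^{-1/2}$, so $\g$ and $\g'$ lie in the same or adjacent $R^{-1/2}$-caps, and $w_\theta$ being attached to $\theta$ does not lose anything after a harmless re-summation.
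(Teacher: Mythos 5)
Your proposal is correct, and it rests on the same two pillars as the paper's proof -- Plancherel together with a count of how many Fourier supports $\g-\g'$ can contain a given frequency $\xi$ with $|\xi|\sim s$ (yielding $\lambda(s^{-1}R^{-1})$), combined with the fact that each $\g$ has only $\lesssim\lambda(s)$ partners $\g'$ at separation $\sim s$ -- but the bookkeeping is organized differently. The paper first groups the $\g\subset\theta$ into intermediate blocks $\theta_s$ of length $s$, runs the overlap count at the level of the sets $\theta-\theta$ (obtaining the ratio $\lambda(s^{-1}R^{-1})/\lambda(R^{-1/2})$, which is precisely where the $(R,\e)$-normalization hypothesis is used), and then controls each single-$\theta$ contribution by an $L^\infty\times L^1$ bound worth $\lambda(R^{-1/2})\lambda(s)\sum_{\g\subset\theta}\|f_\g\|_2^2$, so that the two factors of $\lambda(R^{-1/2})$ cancel. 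You instead do the overlap count directly at the level of pairs $(\g,\g')$, where the bound $\lambda(s^{-1}R^{-1})$ follows straight from the definition of $\lambda$ as a sup over arcs, and then sum $\|f_\g\|_2\|f_{\g'}\|_2$ over $s$-close pairs to produce the $\lambda(s)$; this is slightly more direct and, as a bonus, does not appear to need the $(R,\e)$-normalization hypothesis at all for this lemma. Two small remarks: the worry about the tails of $w_\theta$ is a non-issue, since $\widehat{w}_\theta$ enters only multiplicatively on the frequency side (it can only shrink supports, and $\|\widehat{w}_\theta\|_\infty\le\|w_\theta\|_1\lesssim1$ is all you need); and in the overlap count you should record that $\xi\in\g-\g'$ forces $|\xi_2|\le2|\xi_1|+O(R^{-1})$, so $|\xi|\sim s$ implies $|\xi_1|\gtrsim s$, which is what justifies dividing by $\xi_1$ to pin $\xi_\g+\xi_{\g'}$ to an interval of length $\sim s^{-1}R^{-1}$.
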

\begin{proof} Organize the $\{\g\}$ into subcollections $\{\theta_s\}$ in which each $\theta_s$ is a union of $\g$ which intersect the same $\sim s$-arc of $\P^1$, where here for concreteness, $\sim s$ means within a factor of $2$. Then by Plancherel's theorem, since $\overline{\widecheck{\eta}}_{\sim s}={\widecheck{\eta}}_{\sim s}$, we have for each $\theta$
\begin{align}
|f_\theta|^2*\widecheck{\eta}_{\sim s}(x)&= \int_{\R^2}|f_\theta|^2(x-y)\widecheck{\eta}_{\sim s}(y)dy \nonumber \\
&=  \int_{\R^2}\widehat{f}_\theta*\widehat{\overline{f}}_\theta(\xi)e^{-2\pi i x\cdot\xi}\eta_{\sim s}(\xi)d\xi \nonumber \\
&=  \sum_{\theta_s,\theta_s'\subset\theta}\int_{\R^2}e^{-2\pi i x\cdot\xi}\widehat{f}_{\theta_s}*\widehat{\overline{f}}_{\theta_s'}(\xi)\eta_{\sim s}(\xi)d\xi .\label{dis}
\end{align}
The support of $\widehat{\overline{f}}_{\theta_s'}(\xi)=\int e^{-2\pi ix\cdot\xi}\overline{f}_{\theta_s'}(x)dx=\overline{\widehat{f}}_{\theta_s'}(-\xi)$ is contained in $-\theta_s'$. This means that the support of $\widehat{f}_{\theta_s}*\widehat{\overline{f}}_{\theta_s'}(\xi)$ is contained in $\theta_s-\theta_s'$. Since the support of $\eta_{\sim s}(\xi)$ is contained in the ball of radius $2s$, for each $\theta_s\subset\theta$, there are only finitely many $\theta_s'\subset\theta$ so that the integral in \eqref{dis} is nonzero. Thus we may write  
\[ G*\widecheck{\eta}_{\sim s}(x)= \sum_\theta|f_\theta|^2*w_\theta*\widecheck{\eta}_{\sim s}(x)=\sum_\theta \sum_{\substack{\theta_s,\theta_s'\subset\theta\\ \theta_s\sim\theta_s'}}(f_{\theta_s}\overline{f}_{\theta_s'})*w_\theta*\widecheck{\eta}_{\sim s}(x). \]
where the second sum is over $\theta_s,\theta_s'\subset\theta$ with $\text{dist}(\theta_s,\theta_s')<2s$. Using the above pointwise expression and then Plancherel's theorem, we have 
\begin{align*}
\int_{\R^2}|G*\widecheck{\eta}_{\sim s}|^2&=    \int_{\R^2}|\sum_\theta \sum_{\substack{\theta_s,\theta_s'\subset \theta\\ \theta_s\sim\theta_s'}}(f_{\theta_s}\overline{f}_{\theta_s'})*w_\theta*\widecheck{\eta}_{\sim s}|^2 \\
    &=    \int_{\R^2}|\sum_\theta \sum_{\substack{\theta_s,\theta_s'\subset \theta\\ \theta_s\sim\theta_s'}}(\widehat{f_{\theta_s}}*\widehat{\overline{f}}_{\theta_s'})\widehat{w}_\theta{\eta}_{\sim s}|^2 
\end{align*}
For each $\theta$, $\sum_{\substack{\theta_s,\theta_s'\subset \theta\\ \theta_s\sim\theta_s'}}(\widehat{f_{\theta_s}}*\widehat{\overline{f}}_{\theta_s'})$ is supported in $\theta-\theta$, since each summand is supported in $\theta_s-\theta_s'$ and $\theta_s,\theta_s'\subset\theta$. For each $\xi\in\R^2$, $|\xi|>\frac{1}{2}r$, the maximum number of $\theta-\theta$ containing $\xi$ is bounded by the maximum number of $\theta$ intersecting an $R^{-1/2}\cdot s^{-1}R^{-1/2}$-arc of the parabola. Using that $\lambda(\cdot)$ is $(R,\e)$-normalized, this number is bounded above by  $C_\e R^\e \frac{\lambda(s^{-1}R^{-1})}{\lambda(R^{-1/2})}$. Since $\eta_{\sim s}$ is supported in the region $|\xi|>\frac{1}{2}r$, by Cauchy-Schwarz
\begin{align*}
\int_{\R^2}|\sum_\theta \sum_{\substack{\theta_s,\theta_s'\subset \theta\\ \theta_s\sim\theta_s'}}(\widehat{f_{\theta_s}}*\widehat{\overline{f}}_{\theta_s'})\widehat{w}_\theta{\eta}_{\sim s}|^2 &\lesssim C_\e R^\e\frac{\lambda(r^{-1}R^{-1})}{\lambda(R^{-1/2})}\sum_\theta\int_{\R^2}| \sum_{\substack{\theta_s,\theta_s'\subset \theta\\ \theta_s\sim\theta_s'}}(\widehat{f_{\theta_s}}*\widehat{\overline{f}}_{\theta_s'})\widehat{w}_\theta{\eta}_{\sim s}|^2 \\
&=C_\e R^\e\frac{\lambda(r^{-1}R^{-1})}{\lambda(R^{-1/2})}\sum_\theta\int_{\R^2}| \sum_{\substack{\theta_s,\theta_s'\subset \theta\\ \theta_s\sim\theta_s'}}({f_{\theta_s}}{\overline{f}}_{\theta_s'})*w_\theta*\widecheck{\eta}_{\sim s}|^2 \\
&\lesssim C_\e R^\e\frac{\lambda(r^{-1}R^{-1})}{\lambda(R^{-1/2})}\sum_{\theta}\int_{\R^2}| \sum_{\theta_s\subset\theta}|f_{\theta_s}|^2*w_\theta*|\widecheck{\eta}_{\sim s}||^2. 
\end{align*}
It remains to analyze each of the integrals above:
\begin{align*}
\int_{\R^2}|\sum_{\theta_s\subset\theta}|f_{\theta_s}|^2*w_{\theta}*|\widecheck{\eta}_{\sim s}||^2    &\lesssim \|\sum_{\theta_s\subset\theta}|f_{\theta_s}|^2*w_{\theta}*|\widecheck{\eta}_{\sim s}|\|_\infty\int_{\R^2}\sum_{\theta_s\subset\theta }|f_{\theta_s}|^2*w_{\theta}*|\widecheck{\eta}_{\sim s}| .
\end{align*}
Bound the $L^\infty$ norms using the assumption that $\|f_\g\|_\infty\lesssim 1$ for all $\g$:
\begin{align*} 
\|\sum_{\theta_s\subset\theta }|f_{\theta_s}|^2*w_{\theta}*|\widecheck{\eta}_{\sim s}|\|_\infty\lesssim \sum_{\theta_s\subset\theta }\|f_{\theta_s}\|_\infty^2\lesssim \sum_{\theta_s\subset\theta }\|\sum_{\g\subset\theta_s}|f_{\g}|\|_\infty^2&\lesssim \lambda(R^{-1/2})\lambda(s).
\end{align*}
Finally, using Young's convolution inequality and the $L^2$-orthogonality of the $f_{\g}$, we have
\[ \int_{\R^2}\sum_{\theta_s\subset\theta}|f_{\theta_s}|^2*w_{\theta}*|\widecheck{\eta}_{\sim s}|\lesssim \int_{\R^2}\sum_{\theta_s\subset\theta}|f_{\theta_s}|^2= \sum_{\g\subset\theta}\|f_\g\|_2^2.\]
\end{proof}

\begin{lemma}[High lemma II]\label{high2} For each $k$,
\[ \int_{\R^2}|g_k^h|^2\lesssim R^{3\e} \sum_{\tau_k}\int_{\R^2}|f_{\tau_{k+1}}^{k+1}|^4.\]
\end{lemma}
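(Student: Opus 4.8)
The plan is to expand $g_k = \sum_{\tau_k} |f_{\tau_k}^{k+1}|^2 * w_{\tau_k}$ on the Fourier side and track which frequencies survive the high-frequency cutoff $1 - \widecheck{\eta}_k$, which removes frequencies in the ball $B_{R_{k+1}^{-1/2}}$. For a single $\tau_k$, the function $|f_{\tau_k}^{k+1}|^2$ has Fourier support in $\tau_k - \tau_k$, which (by Lemma \ref{pruneprop}\eqref{item3}) is comparable to a $R_k^{-1/2} \times R_k^{-1}$ rectangle through the origin; this is \emph{not} killed by the low cutoff. To exploit the high cutoff I would decompose each $f_{\tau_k}^{k+1}$ into its pieces $f_{\tau_{k+1}}^{k+1}$ with $\tau_{k+1} \subset \tau_k$, so that $|f_{\tau_k}^{k+1}|^2 = \sum_{\tau_{k+1}, \tau_{k+1}'} f_{\tau_{k+1}}^{k+1} \overline{f_{\tau_{k+1}'}^{k+1}}$, and observe that the cross term $f_{\tau_{k+1}}^{k+1} \overline{f_{\tau_{k+1}'}^{k+1}}$ has Fourier support in $\tau_{k+1} - \tau_{k+1}'$. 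When $\tau_{k+1}$ and $\tau_{k+1}'$ are \emph{not} adjacent, this region lies outside $B_{R_{k+1}^{-1/2}}$, so it contributes to the high part; when they are adjacent (including equal) the support meets the low ball and the contribution to $g_k^h$ is controlled but not discarded. The key geometric point, exactly as in the standard high/low argument, is that for each frequency $\xi$ with $|\xi| \gtrsim R_{k+1}^{-1/2}$, the number of pairs $(\tau_k, \tau_k')$ — equivalently the number of rectangles $\tau_k - \tau_k$ — containing $\xi$ is $O(R^\e)$, because the $\tau_k$ are $R_k^{-1/2}$-caps on the parabola and the difference sets have bounded overlap at a fixed distance from the origin.

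The main steps, in order: (i) Write $g_k^h = g_k - g_k * \widecheck{\eta}_k$ and expand $g_k = \sum_{\tau_k} \sum_{\tau_{k+1}, \tau_{k+1}' \subset \tau_k} (f_{\tau_{k+1}}^{k+1} \overline{f_{\tau_{k+1}'}^{k+1}}) * w_{\tau_k}$. (ii) Apply Plancherel; the high cutoff restricts to $|\xi| \gtrsim R_{k+1}^{-1/2}$, and on that region use the bounded-overlap fact for the sets $\tau_k - \tau_k$ together with Cauchy--Schwarz to pass from $|\sum_{\tau_k}(\cdots)|^2$ to $R^\e \sum_{\tau_k} |(\cdots)|^2$, losing only a factor $C_\e R^{O(\e)}$. (iii) For a fixed $\tau_k$, undo Plancherel and bound $\int |\sum_{\tau_{k+1},\tau_{k+1}'\subset\tau_k} (f_{\tau_{k+1}}^{k+1}\overline{f_{\tau_{k+1}'}^{k+1}}) * w_{\tau_k} * \widecheck{\eta}_k|^2 \lesssim \int |\sum_{\tau_{k+1}\subset\tau_k}|f_{\tau_{k+1}}^{k+1}|^2 * w_{\tau_k} * |\widecheck{\eta}_k||^2$ after another bounded-overlap / Cauchy--Schwarz step (only boundedly many $\tau_{k+1}'$ interact with a given $\tau_{k+1}$ through the convolution kernel), reducing to a square-function estimate for the pruned pieces at scale $k+1$. (iv) Absorb the weights $w_{\tau_k}$ and $|\widecheck{\eta}_k|$ using Young's inequality and the locally constant property (Lemma \ref{locconst}), noting $w_{\tau_k} * w_{\tau_{k+1}} \lesssim w_{\tau_{k+1}}$ since $\tau_k^* \subset \tau_{k+1}^*$, to arrive at $\int |g_k^h|^2 \lesssim R^{3\e} \sum_{\tau_k} \int \big(\sum_{\tau_{k+1}\subset\tau_k} |f_{\tau_{k+1}}^{k+1}|^2\big)^2$, and finally apply the $L^2$-orthogonality of the $\{f_{\tau_{k+1}}^{k+1}\}_{\tau_{k+1}\subset\tau_k}$ — their Fourier supports $2\tau_{k+1}$ are finitely overlapping — to replace $\big(\sum |f_{\tau_{k+1}}^{k+1}|^2\big)^2$ by $\sum |f_{\tau_{k+1}}^{k+1}|^4$ inside the integral, up to an $R^{O(\e)}$ factor, giving the claimed bound.

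I expect the main obstacle to be step (iii)--(iv): getting from the expanded cross-term expression back to a \emph{clean} sum of fourth powers $\sum_{\tau_k}\int|f_{\tau_{k+1}}^{k+1}|^4$ without picking up an extra power of $\#\{\tau_{k+1}\subset\tau_k\}$ that would not be absorbable into $R^{3\e}$. This requires being careful that the only pairs $(\tau_{k+1},\tau_{k+1}')$ that genuinely contribute after convolution with the smooth kernels are those at bounded distance, so that the diagonal sum $\sum_{\tau_{k+1}}|f_{\tau_{k+1}}^{k+1}|^2$ dominates; and that the final step from $(\sum_{\tau_{k+1}}|f_{\tau_{k+1}}^{k+1}|^2)^2$ to $\sum_{\tau_{k+1}}|f_{\tau_{k+1}}^{k+1}|^4$ uses orthogonality rather than a crude Cauchy--Schwarz (which would cost the number of caps). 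The bounded-overlap count for $\tau_k - \tau_k$ away from the origin is standard for the parabola and should not cause trouble, and the weight manipulations are routine given Lemma \ref{locconst}.
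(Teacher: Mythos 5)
Your proposal follows essentially the same route as the paper's proof: Plancherel, the observation that $1-\eta_k$ is supported where $|\xi|\gtrsim R_{k+1}^{-1/2}$, the $O(R^{\e})$ overlap of the difference sets $2\tau_k-2\tau_k$ in that region feeding into Cauchy--Schwarz over $\tau_k$, and then Young's inequality with $\|w_{\tau_k}\|_1\lesssim 1$. The one point worth correcting is the ``main obstacle'' you flag at the end. The paper never expands $|f_{\tau_k}^{k+1}|^2$ into $\tau_{k+1}\times\tau_{k+1}'$ cross-terms and does not need orthogonality in the last step: it bounds $\int\big||f_{\tau_k}^{k+1}|^2*w_{\tau_k}\big|^2\le\int|f_{\tau_k}^{k+1}|^4$ by Young, writes $f_{\tau_k}^{k+1}=\sum_{\tau_{k+1}\subset\tau_k}f_{\tau_{k+1}}^{k+1}$, and applies the crude bound $|\sum_j a_j|^4\le n^3\sum_j|a_j|^4$. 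The ``cost of the number of caps'' you worry about is harmless precisely because consecutive scales satisfy $R_{k+1}/R_k=R^{\e}$, so $\#\{\tau_{k+1}\subset\tau_k\}\lesssim R^{\e/2}$ and the loss is exactly what produces the $R^{3\e}$ in the statement. Moreover, $L^2$-orthogonality of the $f_{\tau_{k+1}}^{k+1}$ would not by itself convert $\int\big(\sum_{\tau_{k+1}}|f_{\tau_{k+1}}^{k+1}|^2\big)^2$ into $\sum_{\tau_{k+1}}\int|f_{\tau_{k+1}}^{k+1}|^4$ --- that is an $L^4$ square-function statement, not an $L^2$ pairing --- so if you route the argument through the square function you will still end up using the (affordable) Cauchy--Schwarz. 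With that correction your argument closes, and is otherwise the same as the paper's.
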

\begin{proof} By Plancherel's theorem, we have 
\begin{align*}
\int_{\R^2}|g_k^h|^2&= \int_{\R^2}|g_k-g_k^\ell|^2\\ 
&=    \int_{\R^2}|\sum_{\tau_k}(\widehat{f_{\tau_k}^{k+1}}*\widehat{\overline{f_{\tau_k}^{k+1}}})\widehat{w}_{\tau_k}-\sum_{\tau_k}(\widehat{f_{\tau_k}^{k+1}}*\widehat{\overline{f^{k+1}_{\tau_k}}})\widehat{w}_{\tau_k}\eta_k|^2\\
&\le \int_{|\xi|>cR_{k+1}^{-1/2}}|\sum_{\tau_k}(\widehat{f_{\tau_k}^{k+1}}*\widehat{\overline{f_{\tau_k}^{k+1}}})\widehat{w}_{\tau_k}|^2
\end{align*}
since $(1-\eta_k)$ is supported in the region $|\xi|>cR_{k+1}^{-1/2}$ for some constant $c>0$. For each $\tau_k$, $\widehat{f_{\tau_k}^{k+1}}*\widehat{\overline{f_{\tau_k}^{k+1}}}$ is supported in $2\tau_k-2\tau_k$, using property \eqref{item4} of Lemma \ref{pruneprop}. The maximum overlap of the sets $\{2\tau_k-2\tau_k\}$ in the region $|\xi|\ge cR_{k+1}^{-1/2}$ is bounded by $\sim\frac{R_{k}^{-1/2}}{R_{k+1}^{-1/2}}\lesssim R^{\e}$. Thus using Cauchy-Schwarz, 
\begin{align*} 
\int_{|\xi|>cR_{k+1}^{-1/2}}|\sum_{\tau_k}(\widehat{f_{\tau_k}^{k+1}}*\widehat{\overline{f_{\tau_k}^{k+1}}})\widehat{w}_{\tau_k}|^2&\lesssim R^\e \sum_{\tau_k} \int_{|\xi|>cR_{k+1}^{-1/2}}|(\widehat{f_{\tau_k}^{k+1}}*\widehat{\overline{f_{\tau_k}^{k+1}}})\widehat{w}_{\tau_k}|^2\\
&\le R^\e \sum_{\tau_k} \int_{\R^2}|(\widehat{f_{\tau_k}^{k+1}}*\widehat{\overline{f_{\tau_k}^{k+1}}})\widehat{w}_{\tau_k}|^2\\
&= R^\e \sum_{\tau_k} \int_{\R^2}||f_{\tau_k}^{k+1}|^2*{w}_{\tau_k}|^2\le R^{3\e} \sum_{\tau_{k+1}} \int_{\R^2}|f_{\tau_{k+1}}^{k+1}|^4  
\end{align*} 
where we used Young's inequality with $\|w_{\tau_k}\|_1\lesssim 1$ and $f_{\tau_k}^{k+1}=\sum_{\tau_{k+1}\subset\tau_k}f_{\tau_{k+1}}^{k+1}$ with Cauchy-Schwarz again in the last line. 
\end{proof}

\subsection{Bilinear restriction} We will use the following version of a local bilinear restriction theorem, which follows from a standard C\'{o}rdoba argument \cite{cordoba1977kakeya} included here for completeness.
\begin{theorem}\label{bilrest} Let $S\ge 4$, $\frac{1}{2}\ge D\ge S^{-1/2}$, and $X\subset\R^2$ be any Lebesgue measurable set. Suppose that $\tau$ and $\tau'$ are $D$-separated subsets of $\mc{N}_{S^{-1}}(\P^1)$. Then for a partition $\{\theta_S\}$ of $\mc{N}_{S^{-1}}(\P^1)$ into $\sim S^{-1/2}\times S^{-1}$-blocks, we have
\[\int_{X}|f_{\tau}|^2(x)|f_{\tau'}|^2(x)dx\lesssim D^{-2}
\int_{\mc{N}_{S^{1/2}}(X)}|\sum_{\theta_S}|f_{\theta_S}|^2*w_{S^{1/2}}(x)|^2dx .\]
\end{theorem}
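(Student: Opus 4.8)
The plan is to run the standard C\'ordoba biorthogonality argument, keeping the localization to $X$ explicit. First I would expand $f_\tau=\sum_{\theta_S\subset\tau}f_{\theta_S}$ and $f_{\tau'}=\sum_{\theta_S'\subset\tau'}f_{\theta_S'}$ (one may reduce to the case that $\tau,\tau'$ are unions of caps $\theta_S$, which is all that is needed in applications), so that
\[ |f_\tau|^2|f_{\tau'}|^2=\Big|\sum_{\theta_S\subset\tau}\sum_{\theta_S'\subset\tau'}f_{\theta_S}f_{\theta_S'}\Big|^2, \]
and each term $f_{\theta_S}f_{\theta_S'}$ has Fourier transform supported in $\theta_S+\theta_S'$. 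The two geometric facts I would record are: (i) since $\theta_S,\theta_S'$ are $\sim S^{-1/2}\times S^{-1}$ caps, each $\theta_S+\theta_S'$ has diameter $\lesssim S^{-1/2}$; and (ii) using that $\tau,\tau'$ are $D$-separated, any fixed $S^{-1/2}$-ball meets at most $\lesssim D^{-1}$ of the sets $\theta_S+\theta_S'$. For (ii): if $\theta_S$ lies over $\xi_1$ and $\theta_S'$ over $\xi_1'$, then $\theta_S+\theta_S'$ sits within $O(S^{-1/2})$ of $(\xi_1+\xi_1',\,\xi_1^2+(\xi_1')^2)$, so pinning this point to an $S^{-1/2}$-ball forces $\xi_1+\xi_1'$ into an interval of length $\sim S^{-1/2}$ and hence $(\xi_1-\xi_1')^2$ into an interval of length $\lesssim S^{-1/2}$; since $|\xi_1-\xi_1'|\gtrsim D$, this confines $\xi_1-\xi_1'$ to an interval of length $\lesssim S^{-1/2}/D$, i.e.\ to $\lesssim D^{-1}$ admissible caps (this is where $D\ge S^{-1/2}$ makes the bound meaningful).

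Next I would invoke the localized $L^2$ orthogonality at the core of the C\'ordoba argument: if $g_j$ are functions whose Fourier supports have diameter $\lesssim S^{-1/2}$ and are such that every $S^{-1/2}$-ball meets at most $M$ of these supports, then
\[ \int_X\Big|\sum_j g_j\Big|^2\lesssim M\int_{\mc{N}_{S^{1/2}}(X)}\sum_j|g_j|^2 \]
up to rapidly decaying tails. To prove this, cover a neighborhood of $X$ by a bounded-overlap family of $S^{1/2}$-cubes $Q$ and pick a Gabor-type system $\{\Phi_Q\}$ with $\Phi_Q$ rapidly decaying off $Q$, $\widehat{\Phi_Q}$ supported in a ball of radius $\ll S^{-1/2}$, and $\sum_Q|\Phi_Q|^2\sim 1$; on each cube run Plancherel, $\int|\Phi_Q|^2\big|\sum_j g_j\big|^2=\int\big|\sum_j\Phi_Q g_j\big|^2\lesssim M\sum_j\int|\Phi_Q|^2|g_j|^2$, since $\widehat{\Phi_Q g_j}$ lies in a mild dilate of $\supp\,\widehat{g_j}$ and, by (i), the dilated supports still have every $S^{-1/2}$-ball meeting $\lesssim M$ of them; then sum over the $Q$ that meet $X$. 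Applying this with $g_j=f_{\theta_S}f_{\theta_S'}$ and $M\lesssim D^{-1}$ gives
\[ \int_X|f_\tau|^2|f_{\tau'}|^2\lesssim D^{-1}\int_{\mc{N}_{S^{1/2}}(X)}\Big(\sum_{\theta_S\subset\tau}|f_{\theta_S}|^2\Big)\Big(\sum_{\theta_S'\subset\tau'}|f_{\theta_S'}|^2\Big). \]

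To finish, I would bound each square function by the full one, $\sum_{\theta_S\subset\tau}|f_{\theta_S}|^2\le\sum_{\theta_S}|f_{\theta_S}|^2$ and likewise for $\tau'$, so that the integrand is $\le\big(\sum_{\theta_S}|f_{\theta_S}|^2\big)^2$, and then apply the locally constant property (Lemma \ref{locconst} with $s=S^{1/2}$), which gives $\sum_{\theta_S}|f_{\theta_S}|^2(x)\lesssim\big(\sum_{\theta_S}|f_{\theta_S}|^2*w_{S^{1/2}}\big)(x)$ pointwise, hence $\big(\sum_{\theta_S}|f_{\theta_S}|^2\big)^2\lesssim\big|\sum_{\theta_S}|f_{\theta_S}|^2*w_{S^{1/2}}\big|^2$. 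Together with $D^{-1}\le D^{-2}$ (valid since $D\le\frac12$), this yields the claimed bound. The step I expect to be the main obstacle is carrying out the $X$-localization cleanly --- in particular making sure that enlarging the frequency supports $\theta_S+\theta_S'$ by the tiny Fourier supports of the spatial cutoffs $\Phi_Q$ does not degrade the overlap bound $M\lesssim D^{-1}$ (this is exactly where fact (i) is used) --- together with the overlap computation (ii) itself; the rest is routine bookkeeping with Schwartz tails.
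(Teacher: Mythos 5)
Your proof is correct and is the same C\'ordoba biorthogonality argument as the paper's, organized slightly differently. The paper expands $|f_\tau|^2|f_{\tau'}|^2$ into a fourfold sum $\sum f_{\theta_1}\overline{f}_{\theta_2}f_{\theta_2'}\overline{f}_{\theta_1'}$, tests against a ball cutoff $\varphi_B$ with $\widehat{\varphi_B}$ supported in $B_{S^{-1/2}}$, and uses a Plancherel support condition to show that nonvanishing terms must have $d(\theta_1,\theta_2),d(\theta_1',\theta_2')\lesssim D^{-1}S^{-1/2}$, giving $\lesssim D^{-1}$ choices for each of $\theta_2,\theta_2'$ and hence a factor $D^{-2}$ after AM--GM. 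You instead view $f_\tau f_{\tau'}=\sum_{\theta_S\subset\tau,\theta_S'\subset\tau'}f_{\theta_S}f_{\theta_S'}$ as a sum of pieces with Fourier supports $\theta_S+\theta_S'$ of diameter $\lesssim S^{-1/2}$, compute directly (using the sum-and-difference change of variables) that these supports overlap any $S^{-1/2}$-ball at most $\lesssim D^{-1}$ times, and then invoke a localized $L^2$ almost-orthogonality lemma. Your version of the overlap count is actually sharper: because it pins down both $\xi_1+\xi_1'$ and $\xi_1-\xi_1'$ simultaneously, you obtain the intermediate constant $D^{-1}$ rather than the paper's $D^{-2}$, which you then trivially relax to match the stated bound. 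The remaining steps --- passing to the full square function and invoking Lemma~\ref{locconst} with $s=S^{1/2}$ to replace $\sum_{\theta_S}|f_{\theta_S}|^2$ by its local average $\sum_{\theta_S}|f_{\theta_S}|^2*w_{S^{1/2}}$, and handling Schwartz tails from the Gabor-type cover --- are all carried out correctly and match the paper's finishing steps in spirit. The one point you flagged as a potential obstacle (that dilating the supports $\theta_S+\theta_S'$ by the $\ll S^{-1/2}$ Fourier support of $\Phi_Q$ preserves the overlap bound) is indeed where the diameter bound is needed, and your handling of it is fine.
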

In the following proof, the exact definition of the $\sim S^{-1}\times S^{-1}$ blocks $\theta_S$ is not important. However, by $f_\tau$ and $f_{\tau'}$, we mean more formally $f_\tau=\sum_{\theta_S\cap\tau\not=\emptyset}f_{\theta_S}$ and $f_{\tau'}=\sum_{\theta_S\cap\tau'\not=\emptyset}f_{\theta_S}$. 
\begin{proof} Let $B$ be a ball of radius $S^{1/2}$ centered at a point in $X$. Let $\p_B$ be a smooth function satisfying $\p_B\gtrsim 1$ in $B$, $\p_B$ decays rapidly away from $B$, and $\widehat{\p_B}$ is supported in the $S^{-1/2}$ neighborhood of the origin. Then 
\begin{align*}
    \int_{X\cap B}|f_\tau|^2|f_{\tau'}|^2&\lesssim \int_{\R^2}|f_\tau|^2|f_{\tau'}|^2\p_B. 
\end{align*}
Since $S$ is a fixed parameter and $\theta_S$ are fixed $\sim S^{-1/2}\times S^{-1}$ blocks, simplify notation by dropping the $S$. Expand the squared terms in the integral above to obtain
\[ \int_{\R^2}|f_\tau|^2|f_{\tau'}|^2\p_B=\sum_{\substack{\theta_i\cap\tau\not=\emptyset\\\theta'_i\cap\tau'\not=\emptyset}}\int_{\R^2}f_{\theta_1}\overline{f}_{\theta_2}f_{\theta_{2}'}\overline{f}_{\theta_{1}'}\p_B. \]
By Placherel's theorem, each integral vanishes unless
\begin{equation}\label{supprel}(\theta_{1}-\theta_{2})\cap \mc{N}_{S^{-1/2}}(\theta_{1}'-\theta_{2}')\not=\emptyset . \end{equation}
Next we check that the number of tuples $(\theta_1,\theta_2,\theta_1',\theta_2')$ (with $\theta_1,\theta_2$ having nonempty intersection with $\tau$ and $\theta_1',\theta_2'$ having nonempty intersection with $\tau'$) satisfying \eqref{supprel} is $O(D^{-1})$. Indeed, suppose that $\xi<\xi'<\xi''<\xi'''$ satisfy 
\[ (\xi,\xi^2)\in\theta_{1},\quad(\xi',(\xi')^2)\in\theta_{2},\quad (\xi'',(\xi'')^2)\in\theta_{1}',\quad (\xi''',(\xi''')^2)\in\theta_{2}'  \]
and
\[ \xi-\xi'=\xi''-\xi'''+O(S^{-1/2}). \]
Then by the mean value theorem, $\xi^2-(\xi')^2=2\xi_1(\xi-\xi')$ for some $\xi<\xi_1<\xi'$ and $(\xi'')^2-(\xi''')^2=2\xi_2(\xi''-\xi''')$ for some $\xi''<\xi_2<\xi'''$. Since $(\xi_1,\xi_1^2)\in\tau$ and $(\xi_2,\xi_2^2)\in\tau'$, we also know that $|\xi_1-\xi_2|\ge D$. Putting everything together, we have
\begin{align*}
    |\xi^2-(\xi')^2-((\xi'')^2-(\xi''')^2)|&= 2 |\xi_1(\xi-\xi')-\xi_2(\xi''-\xi''')|\\
    &\ge 2|\xi_1-\xi_2||\xi-\xi'|-cS^{-1/2}\ge (2C-c) S^{-1/2}
\end{align*}
if either $\text{dist}((\xi,\xi^2),(\xi',(\xi')^2))$ or $\text{dist}((\xi'',(\xi'')^2),(\xi''',(\xi''')^2))$ is larger than $CD^{-1}S^{-1/2}$. Thus for a suitably large $C$, the heights will have difference larger than the allowed $O(S^{-1/2})$-neighborhood imposed by \eqref{supprel}. The conclusion is that 
\begin{align*}
    \sum_{\substack{\theta_i\cap\tau\not=\emptyset\\\theta'_i\cap\tau'\not=\emptyset}}\int_{\R^2}f_{\theta_1}\overline{f}_{\theta_2}f_{\theta_{2}'}\overline{f}_{\theta_{1}'}\p_B&=\sum_{\substack{\theta_1\cap\tau\not=\emptyset\\\theta'_1\cap\tau'\not=\emptyset}}\,\,\sum_{\substack{d(\theta_1,\theta_2)\le CD^{-1}S^{-1/2}\\ d(\theta_1',\theta_2')\le CD^{-1}S^{-1/2}}}\int_{\R^2}f_{\theta_1}\overline{f}_{\theta_2}f_{\theta_{2}'}\overline{f}_{\theta_{1}'}\p_B\\
    &\lesssim D^{-2} 
    \int_{\R^2}(\sum_\theta|f_\theta|^2)^2\p_B . 
\end{align*}
Using the locally constant property and summing over a finitely overlapping cover of $\R^2$ by $S^{1/2}-$balls $B'$ with centers $c_{B'}$, we have
\begin{align*}
\int_{\R^2}(\sum_\theta|f_\theta|^2)^2\p_B&\le \sum_{B'}|B|\|\sum_\theta|f_\theta|^2\|_{L^\infty(B')}^2\|\p_B\|_{L^\infty(B')}\\
&\le |B| \Big(\sum_{B'}\|\sum_\theta|f_\theta|^2\|_{L^\infty(B')}\|\p_B^{1/2}\|_{L^\infty(B')}\Big)^2\\
&\lesssim |B| \Big(\sum_{B'}\sum_\theta|f_\theta|^2*w_{S^{1/2}}(c_{B'})\|\p_B^{1/2}\|_{L^\infty(B')}\Big)^2\\
&\lesssim |B|^{-1} \Big(\int_{\R^2}\sum_\theta|f_\theta|^2*w_{S^{1/2}}(y)\p_B^{1/2}(y)dy\Big)^2\\
&\lesssim |B|^{-1} \Big(\int_{B}\sum_\theta|f_\theta|^2*w_{S^{1/2}}(y)dy\Big)^2\\
&\le \int_B\Big(\sum_\theta|f_\theta|^2*w_{S^{1/2}}\Big)^2
\end{align*}
where we used that $w_{S^{1/2}}*\p_B^{1/2}(y)\lesssim w_{S^{1/2}}*\chi_B(y)$ in the second to last line.

\end{proof}

\section{Proof of Theorem \ref{main}\label{pf}}

Theorem \ref{main} follows from the following proposition and a broad-narrow argument in \textsection\ref{bilred}. First we prove a version of Theorem \ref{main} where $U_\a$ is replaced by a ``broad" version of $U_\a$. 
\subsection{The broad version of Theorem \ref{main}}
Let $\delta>0$ be a parameter we will choose in the broad/narrow analysis. The notation $\ell(\tau)=s$ means that $\tau$ is an approximate $s\times s^2$ block which is part of a partition of $\mc{N}_{s^2}(\P^1)$. For two non-adjacent blocks $\tau,\tau'$ satisfying $\ell(\tau)=\ell(\tau')=R^{-\d}$, define the broad version of $U_\a$ to be
\begin{equation}\label{broad2} \text{Br}_\a(\tau,\tau')=\{x\in\R^2:\a\sim|f_{\tau}(x)f_{\tau'}(x)|^{1/2},\,\,(|f_\tau(x)|+|f_{\tau'}(x)|)\le R^{O(\d)}\a \}
\}. \end{equation}

\begin{proposition}\label{mainprop} Suppose that $f$ satisfies the hypotheses of Theorem \ref{main} and has an $(R,\e)$-normalized distribution function $\lambda(\cdot)$. Then
\begin{align*}
 |\text{\emph{Br}}_{\a}(\tau,\tau')|
 &\le C_{\e,\d} R^\e R^{O(\delta)}\begin{cases} \frac{1}{\a^4}\underset{s}{\max}\lambda(s^{-1}R^{-1})\lambda(s)\sum_\g\|f_\g\|_2^2\quad&\text{if}\quad \a^2>\frac{\lambda(1)^2}{\underset{s}{\max}\lambda(s^{-1}R^{-1})\lambda(s)}\\
\frac{\lambda(1)^2}{\a^6}\sum_\g\|f_\g\|_2^2\quad&\text{if}\quad  \a^2\le \frac{\lambda(1)^2}{\max_{s}\lambda(s^{-1}R^{-1})\lambda(s)} 
\end{cases}.
\end{align*}
\end{proposition}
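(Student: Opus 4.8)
The plan is to bound $|\text{Br}_\a(\tau,\tau')\cap X|$ separately for each set $X$ in the partition $B_R=H\sqcup\bigsqcup_{k=1}^{N-1}\Omega_k\sqcup L$, following the high/low scheme of \cite{gmw}. This really is a partition: $G=G^\ell+G^h$ and $g_k=g_k^\ell+g_k^h$ force every point to be high- or low-dominated at each scale, so every $x\in B_R$ lies in $H$, in the first $\Omega_k$ (reading $k$ downward from $N-1$) at which $g_k$ is high-dominated, or in $L$. On each $X$ the starting point is the same: since $\a\sim|f_\tau f_{\tau'}|^{1/2}$ on $\text{Br}_\a(\tau,\tau')$,
\[ \a^4\,|\text{Br}_\a(\tau,\tau')\cap X|\;\sim\;\int_{\text{Br}_\a(\tau,\tau')\cap X}|f_\tau|^2|f_{\tau'}|^2 . \]

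For $X=\Omega_k$ (resp.\ $X=L$) I would first invoke the pruning lemma (Lemma~\ref{ftofk}) together with the broad hypothesis $|f_\tau|+|f_{\tau'}|\le R^{O(\d)}\a$ — which makes $|f_\tau|$ and $|f_{\tau'}|$ comparable to $\a$ up to a factor $R^{O(\d)}$ on $\text{Br}_\a(\tau,\tau')$ — to replace $f_\tau,f_{\tau'}$ by the pruned versions $f_\tau^{k+1},f_{\tau'}^{k+1}$ (resp.\ $f_\tau^{1},f_{\tau'}^{1}$), absorbing the resulting $O(R^{-M\d}\a^4)$ error into the left-hand side once $M$ is large. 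In all three cases I would then apply the local bilinear restriction theorem (Theorem~\ref{bilrest}): the arcs of $\tau,\tau'$ are $\gtrsim R^{-\d}$-separated, so with $S=R$ on $H$ (and $\theta_S$ the caps $\theta$), $S=R_k$ on $\Omega_k$ (with $\theta_S=\tau_k$, using that $f^{k+1}$ has Fourier support in $\mc{N}_{R_k^{-1}}(\P^1)$), and $S=R_1$ on $L$ (with $\theta_S=\tau_1$), the hypothesis $D\gtrsim S^{-1/2}$ holds once $\d$ is small. Using the locally constant property (Lemma~\ref{locconst}) to dominate each output square function by $G$, $g_k$, or $g_1$, this reduces everything to estimating $\int_{\widetilde H}G^2$, $\int_{\widetilde\Omega_k}g_k^2$, and $\int_{\widetilde L}g_1^2$, where $\widetilde H,\widetilde\Omega_k,\widetilde L$ are the slight enlargements of $\mc{N}_{S^{1/2}}(\text{Br}_\a\cap X)$ on which the relevant high/low domination still holds (these square functions being locally constant at the appropriate scale).

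On the high set one has $\widetilde H\subset\{G\lesssim|G^h|\}$, so applying the high lemma~I (Lemma~\ref{high}) to each dyadic $s$ with $R^{-\b}\le s\le R^{-1/2}$ and summing over the $O(\log R)$ such $s$,
\[ \int_{\widetilde H}G^2\;\lesssim\;\int|G^h|^2\;\lesssim\;C_\e R^{O(\e)}\,\max_s\lambda(s^{-1}R^{-1})\lambda(s)\sum_\g\|f_\g\|_2^2 , \]
giving the first bound of the proposition for $|\text{Br}_\a\cap H|$. On $\Omega_k$ one has $\widetilde\Omega_k\subset\{g_k\lesssim|g_k^h|\}$, so the high lemma~II (Lemma~\ref{high2}), followed by the sup-norm bound $\|f_{\tau_{k+1}}^{k+1}\|_\infty\lesssim C_\e R^{O(\e)}R^{M\d}\lambda(1)/\a$ from Lemma~\ref{pruneprop} and $\|f_{\tau_{k+1}}^{k+1}\|_2\le\|f_{\tau_{k+1}}\|_2$, yields
\[ \int_{\widetilde\Omega_k}g_k^2\;\lesssim\;\int|g_k^h|^2\;\lesssim\;R^{O(\e)}\sum_{\tau_{k+1}}\int|f_{\tau_{k+1}}^{k+1}|^4\;\lesssim\;C_\e R^{O(\e)}R^{O(M\d)}\,\frac{\lambda(1)^2}{\a^2}\sum_\g\|f_\g\|_2^2 , \]
so $|\text{Br}_\a\cap\Omega_k|\lesssim C_\e R^{O(\e)}R^{O(M\d)}\lambda(1)^2\a^{-6}\sum_\g\|f_\g\|_2^2$, and summing over the $\sim\e^{-1}$ values of $k$ preserves this. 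On the low set, iterating the low lemma (Lemma~\ref{low}) gives $g_1\lesssim g_2\lesssim\cdots\lesssim G\lesssim|G^\ell|\lesssim\lambda(1)$ pointwise on $\widetilde L$; combining this flatness with the broad hypothesis and the $(R,\e)$-normalization — which bounds the number of wave-packet families contributing to $f_\tau^{1}$, each of sup-norm $\lesssim R^{O(\e+\d)}\lambda(1)/\a$ — should again give $|\text{Br}_\a\cap L|\lesssim C_\e R^{O(\e)}R^{O(\d)}\lambda(1)^2\a^{-6}\sum_\g\|f_\g\|_2^2$.

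To conclude, note that both $\a^{-4}\max_s\lambda(s^{-1}R^{-1})\lambda(s)\sum_\g\|f_\g\|_2^2$ and $\lambda(1)^2\a^{-6}\sum_\g\|f_\g\|_2^2$ are at most the right-hand side of the proposition in each $\a$-regime: if $\a^2>\lambda(1)^2/\max_s\lambda(s^{-1}R^{-1})\lambda(s)$ then $\lambda(1)^2/\a^2<\max_s\lambda(s^{-1}R^{-1})\lambda(s)$, so the second expression is bounded by the first, and conversely. Choosing $\d$ small relative to $\e$, then $M$ large (so all pruning errors are absorbed and $R^{O(M\d)}$ is acceptable in the statement), and using $\e^{-1}\le R^\e$, the three estimates combine to the claimed bound with constant $C_{\e,\d}R^\e R^{O(\d)}$. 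I expect the main difficulty to be the low set $L$: there is no high part at any scale to feed into a high lemma, so the extra factor $\lambda(1)/\a^2$ — rather than mere flatness $g_1\lesssim\lambda(1)$ — must be extracted from the interaction between the broad hypothesis and the low-frequency coherence on $L$, with the $(R,\e)$-normalization controlling the relevant multiplicities. A secondary difficulty is the two-parameter bookkeeping: keeping $M\d$ controlled while taking $\d$ small in terms of $\e$, and checking that the enlargements $\widetilde H,\widetilde\Omega_k,\widetilde L$ forced by the $S^{1/2}$-neighborhoods in Theorem~\ref{bilrest} and by local constancy stay inside the corresponding high/low-dominated regions.
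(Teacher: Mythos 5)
Your treatment of $H$ and of the sets $\Omega_k$ is essentially the paper's argument (bilinear restriction, locally constant property, high-dominance, then High Lemma I on $H$ and the pruning lemma plus High Lemma II plus the $L^\infty$ bound of Lemma \ref{pruneprop} on $\Omega_k$), and the final bookkeeping comparing the two regimes of $\a$ is also as in the paper. The gap is the low set $L$, and you have correctly diagnosed it yourself: flatness $g_1\lesssim\lambda(1)$ fed into a fourth-moment/bilinear-restriction scheme is not enough. Concretely, your proposed route ($S=R_1$ bilinear restriction followed by $\int_{\widetilde L}g_1^2\le\|g_1\|_{L^\infty(\widetilde L)}\int g_1\lesssim\lambda(1)\sum_\g\|f_\g\|_2^2$) yields only $|\text{Br}_\a\cap L|\lesssim\lambda(1)\a^{-4}\sum_\g\|f_\g\|_2^2$, and $\lambda(1)\a^{-4}\le\lambda(1)^2\a^{-6}$ requires $\a^2\le\lambda(1)$, which need not hold in the relevant regime; nor is $\lambda(1)\le\max_s\lambda(s^{-1}R^{-1})\lambda(s)$ in general, so the bound is not absorbed by the first case either. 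The low set is therefore genuinely unproved in your write-up.

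The missing idea is that on $L$ one should not use bilinear restriction or any high lemma at all, but instead work with the sixth moment directly and prune only two of the three bilinear factors. Since $|f_\tau f_{\tau'}|^{1/2}\sim\a$ on $\text{Br}_\a(\tau,\tau')$, one has
\begin{equation*}
\a^6\,|\text{Br}_\a(\tau,\tau')\cap L|\;\lesssim\;\int_{\text{Br}_\a(\tau,\tau')\cap L}|f_\tau f_{\tau'}|^3\;\lesssim\;\int_{\text{Br}_\a(\tau,\tau')\cap L}|f_\tau^1 f_{\tau'}^1|^2\,|f_\tau f_{\tau'}|\;+\;\text{(pruning error)},
\end{equation*}
where the error is handled by Lemma \ref{ftofk} exactly as on $\Omega_k$. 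The pruned factor is bounded pointwise: by Cauchy--Schwarz and the locally constant property, $|f_\tau^1f_{\tau'}^1|\lesssim R^{O(\e)}g_0$, and on $L$ the iterated low lemma gives $g_0\lesssim\lambda(1)$, so $|f_\tau^1f_{\tau'}^1|^2\lesssim R^{O(\e)}\lambda(1)^2$. The remaining \emph{unpruned} factor is then integrated using $L^2$-orthogonality, $\int|f_\tau f_{\tau'}|\le\tfrac12\int(|f_\tau|^2+|f_{\tau'}|^2)\lesssim\sum_\g\|f_\g\|_2^2$. The two extra powers of $\a$ on the left come from taking the sixth moment rather than the fourth, and the gain over mere flatness comes from keeping one bilinear factor unpruned inside the integral and spending it on $L^2$-orthogonality rather than on an $L^\infty$ bound. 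With this substitution your outline matches the paper's proof.
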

\begin{proof}[Proof of Proposition \ref{mainprop}] \noindent{\underline{Bounding $|\text{Br}_\a(\tau,\tau')\cap H|$}:} Using bilinear restriction, given here by Theorem \ref{bilrest}, we have
\[\a^4|\text{Br}_\a(\tau,\tau')\cap H|\lesssim \sum_{\substack{\ell(\tau)=\ell(\tau)=R^{-\delta}\\d(\tau,\tau')\gtrsim R^{-\delta}}} \int_{U_\a\cap H} |f_{\tau}|^2|f_{\tau'}|^2\lesssim R^{O(\d)}\int_{\mc{N}_{R^{1/2}}(\text{Br}_\a(\tau,\tau')\cap H)}(\sum_\theta|f_\theta|^2*w_{R^{1/2}})^2 .\]
By the locally constant property and the pointwise inequality $w_{R^{1/2}}*w_{\theta}\lesssim w_\theta$ for each $\theta$, we have that $\sum_\theta|f_\theta|^2*w_{R^{1/2}}\lesssim G(x)$. Then 
\begin{align}
\int_{\mc{N}_{R^{1/2}}(\text{Br}_\a(\tau,\tau')\cap H)}|G(x)|^2dx \le \sum_{\substack{Q_{R^{1/2}}:\\Q_{R^{1/2}}\cap (\text{Br}_\a(\tau,\tau')\cap H)\not=\emptyset}}|Q_{R^{1/2}}|\|G\|_{L^\infty(Q_{R^{1/2}}\cap(\text{Br}_\a(\tau,\tau')\cap H))}^2\label{upp}
\end{align} 
For each $x\in H$, $G(x)\le 2|G^h(x)|$. Also note the equality $G^h(x)=\sum_{s}G*\widecheck{\eta}_{\sim s}(x)$ where the sum is over dyadic $s$ in the range $\lceil{R^\b}\rceil^{-1}\lesssim s\lesssim R^{-1/2}$. This is because the Fourier support of $G^h$ is contained in $\cup_\theta (\theta-\theta)\setminus B_{c\lceil R^{\b}\rceil^{-1}}$ for a sufficiently small $c>0$. By dyadic pigeonholing, there is some dyadic $s$, $\lceil{R^\b}\rceil^{-1}\lesssim s\lesssim R^{-1/2}$, so that the upper bound in \eqref{upp} is bounded by
\[ (\log R) \sum_{\substack{Q_{R^{1/2}}:\\Q_{R^{1/2}}\cap (\text{Br}_\a(\tau,\tau')\cap H)\not=\emptyset}}|Q_{R^{1/2}}|\|G*\widecheck{\eta}_{\sim s}\|_{L^\infty(Q_{R^{1/2}}\cap(\text{Br}_\a(\tau,\tau')\cap H))}^2. \]
By the locally constant property, the above displayed expression is bounded by 
\[ (\log R)\sum_{\substack{Q_{R^{1/2}}:\\Q_{R^{1/2}}\cap(\text{Br}_\a(\tau,\tau')\cap H)}}\int_{\R^2}|G*\widecheck{\eta}_{\sim s}|^2w_{Q_{R^{1/2}}} \lesssim (\log R)\int_{\R^2}|G*\widecheck{\eta}_{\sim s}|^2. \]
Use Lemma \ref{high} to upper bound the above integral to finish bounding $|\text{Br}_\a(\tau,\tau')\cap H|$.

\vspace{3mm}
\noindent{\underline{Bounding $|\text{Br}_\a(\tau,\tau')\cap\Omega_k|$}:} First write the trivial inequality 
\[\a^4|\text{Br}_\a(\tau,\tau')\cap\Omega_k|\le \sum_{\substack{\ell(\tau)=\ell(\tau)=R^{-\delta}\\d(\tau,\tau')\gtrsim R^{-\delta}}}\int_{\text{Br}_\a(\tau,\tau')\cap\Omega_k\cap\{|f_\tau f_{\tau'}|^{1/2}\sim \a\}}|f_{\tau}|^2|f_{\tau'}|^2 .\]
By the definition of $\text{Br}_\a(\tau,\tau')\cap\Omega_k$ and Lemma \ref{ftofk}, for each $x\in\text{Br}_\a(\tau,\tau')\cap\Omega_k$ we have 
\begin{align*}
|f_\tau(x)f_{\tau'}(x)|&\le |f_\tau(x)||f_{\tau'}(x)-f_{\tau'}^{k+1}(x)|+|f_\tau(x)-f_{\tau}^{k+1}(x)||f_{\tau'}^{k+1}(x)|+|f_{\tau}^{k+1}(x)f_{\tau'}^{k+1}(x)|\\
    &\lesssim C_\e R^{O(\d)}R^{-M\d}\a^2+|f_{\tau}^{k+1}(x)f_{\tau'}^{k+1}(x)|. 
\end{align*}
For $M$ large enough in the definition of pruning (depending on the implicit universal constant from the broad/narrow analysis which determines the set $\text{Br}_\a(\tau,\tau')$) so that $R^{O(\d)}R^{-M\d}\le R^{-\d}$ and for $R$ large enough depending on $\e$ and $\d$, we may bound each integral by
\[\int_{\{\text{Br}_\a(\tau,\tau')\cap\Omega_k\cap\{|f_\tau f_{\tau'}|^{1/2}\sim \a\}}|f_{\tau}|^2|f_{\tau'}|^2 \lesssim\int_{\text{Br}_\a(\tau,\tau')\cap\Omega_k}|f_{\tau}^{k+1}|^2|f_{\tau'}^{k+1}|^2. \]
Repeat analogous bilinear restriction, high-dominated from the definition of $\Omega_k$, and locally-constant steps from the argument bounding $\text{Br}_\a(\tau,\tau')\cap H$ to obtain
\begin{align*}
    \a^4|\text{Br}_\a(\tau,\tau')\cap\Omega_k|&\lesssim  R^{O(\d)} \int_{\R^2}|g_k^h|^2. 
\end{align*}
Use Lemma \ref{high2} and Lemma \ref{pruneprop} to bound the above integral, obtaining
\begin{align*}
    \a^4|\text{Br}_\a(\tau,\tau')\cap\Omega_k|&\lesssim  (\log R)^4 \int_{\R^2}|g_k^h|^2\\
    &\lesssim R^{O(\d)}R^{O(\e)}\frac{\lambda(1)^2}{\a^2}\sum_{\tau_{k+1}}\int_{\R^2}|f_{\tau_{k+1}}^{k+1}|^2. 
\end{align*}
Use $L^2$-orthogonality and that $|f_{\tau_m}^m|\le|f_{\tau_{m}}^{m+1}|$ for each $m$ to bound each integral above:
\begin{align*}
\int_{\R^2}|f_{\tau_{k+1}}^{k+1}|^2&\le \int_{\R^2}|f_{\tau_{k+1}}^{k+2}|^2 \le C\sum_{\tau_{k+2}\subset\tau_{k+1}}\int_{\R^2}|f_{\tau_{k+2}}^{k+2}|^2 \le\cdots\le C^{\e^{-1}}\sum_{\g\subset\tau_{k+1}}\int_{\R^2}|f_\g|^2. 
\end{align*} 
We are done with this case because
\begin{align*}
    \frac{\lambda(1)^2}{\a^2}&\le \begin{cases} \underset{s}{\max}\lambda(s^{-1}R^{-1})\lambda(s)\quad&\text{if}\quad \a^2>\frac{\lambda(1)^2}{\underset{s}{max}\lambda(s^{-1}R^{-1})\lambda(s)}\\
\frac{\lambda(1)^2}{\a^2}\quad&\text{if}\quad  \a^2\le \frac{\lambda(1)^2}{\max_{s}\lambda(s^{-1}R^{-1})\lambda(s)} 
\end{cases}.
\end{align*}

\noindent{\underline{Bounding $|\text{Br}_\a(\tau,\tau')\cap L|$}:} Repeat the pruning step from the previous case to get
\[ \a^6|\text{Br}_\a(\tau,\tau')\cap L|\lesssim\sum_{\substack{\ell(\tau)=\ell(\tau)=R^{-\delta}\\d(\tau,\tau')\gtrsim R^{-\delta}}} \int_{\text{Br}_\a(\tau,\tau')\cap L\cap\{|f_\tau f_{\tau'}|^{1/2}\sim\a\}}|f_{\tau}^1f_{\tau'}^1|^2|f_\tau f_{\tau'}| . \]
Use Cauchy-Schwartz and the locally constant lemma for the bound $|f_{\tau}^1f_{\tau'}^1|\lesssim R^{O(\e)}G_0$ and recall that by Lemma \ref{low}, $G_0\le C_\e R^\e \lambda(1)$. Then 
\begin{align*}
R^{O(\e)}\sum_{\substack{\ell(\tau)=\ell(\tau)=R^{-\delta}\\d(\tau,\tau')\gtrsim R^{-\delta}}}\int_{\text{Br}_\a(\tau,\tau')\cap L}|G_0|^2|f_\tau f_{\tau'}|&\le R^{O(\e)}\lambda(1)^2\sum_{\ell(\tau)=R^{-\d}}\int_{\R^2}|f_\tau|^2\lesssim  R^{O(\e)}\lambda(1)^2\sum_\g\|f_\g\|_2^2. 
\end{align*}
Using the same upper bound for $\frac{\lambda(1)^2}{\a^2}$ as in the previous case finishes the proof.

\end{proof}

\subsection{Bilinear reduction\label{bilred}}

We will present a broad/narrow analysis to show that Proposition \ref{mainprop} implies Theorem \ref{main}. In order to apply Proposition \ref{mainprop}, we must reduce to the case that $f$ has an $(R,\e)$-normalized distribution function $\lambda(\cdot)$. We demonstrate this through a series of pigeonholing steps.

\begin{proof}[Proposition \ref{mainprop} implies Theorem \ref{main}]

We will pigeonhole the $f_\g$ so that roughly, for any $s$-arc $\w$ of the parabola, the number
\[ \#\{\g:\g\cap \w\not=\emptyset,\quad f_\g\not=0\}\]
is either $0$ or relatively constant among $s$-arcs $\w$. For the initial step, write
\[ \{\tau_{N}:\exists\g\,\text{s.t.}\,f_\g\not=0,\,\,\g\subset\tau_{N}\}=\sum_{1\le \lambda\lesssim R^{\b}R^{-\e}}\Lambda_{N}(\lambda) \]
where $\lambda$ is a dyadic number, $\{\tau_{N}:\#\g\subset\tau_{N}\sim \lambda\}$, $\#\g\subset\tau_{N}$ means $\#\{\g\subset\tau_{N}:f_\g\not=0\}$, and $\#\g\subset\tau_{N}\sim\lambda$ means $\lambda\le \#\g\subset\tau_{N}<2\lambda$. Since there are $\lesssim \log R$ many $\lambda$ in the sum, there exists some $\lambda_{N}$ such that
\[ |\{x:|f(x)|>\a\}|\le C(\log R)|\{x:C(\log R)|\sum_{\tau_{N}\in\Lambda_{N}(\lambda_{N})}f_{\tau_{N}}(x)|>\a\}|. \]
Continuing in this manner, we have
\[ \{\tau_k:\exists \tau_{k+1}\in\Lambda_{k+1}(\lambda_{k+1})\,\text{s.t.}\,\tau_{k+1}\subset \tau_k\}=\sum_{1\le \lambda\le r_k}\Lambda_k(\lambda) \]
where $\Lambda_k(\lambda)=\{\tau_k:\exists \tau_{k+1}\in\Lambda_{k+1}(\lambda_{k+1})\,\text{s.t.}\,\tau_{k+1}\subset \tau_k\quad\text{and}\quad \#\g\subset\tau_k\sim\lambda\}$ and for some $\lambda_k$, 
\begin{align*} 
|\{x:(C(\log R))^{N-k}|&\sum_{\tau_{k+1}\in\Lambda_{k+1}(\lambda_{k+1})}f_{\tau_{k+1}}(x)|>\a\}|\\
&\qquad\qquad \le C(\log R)|\{x:(C(\log R))^{N-k+1}|\sum_{\tau_{k}\in\Lambda_{k}(\lambda_{k})}f_{\tau_{k}}(x)|>\a\}|  . 
\end{align*} 
Continue this process until we have found $\tau_1$, $\lambda_1$ so that
\begin{align*} 
|\{x:|f(x)|>\a\}|\le C^{\e^{-1}}(\log R)^{O(\e^{-1})}|\{x:C^{\e^{-1}}(\log R)^{O(\e^{-1})}|\sum_{\tau_{1}\in\Lambda_{1}(\lambda_{1})}f_{\tau_{1}}(x)|>\a\}|  . 
\end{align*} 
The function $\sum_{\tau_1\in\Lambda_{1}(\lambda_{1})}f_{\tau_{1}}$ now satisfies the hypotheses of Theorem \ref{main} and the property that $\#\g\subset\tau_k\sim\lambda_k$ or $\#\g\subset\tau_k=0$ for all $k$, $\tau_k$. It follows that the associated distribution function $\lambda(\cdot)$ of $\sum_{\tau_1\in\Lambda_{1}(\lambda_{1})}f_{\tau_{1}}$ is $(R,\e)$-normalized since 
\[ \lambda_m\sim \#\g\subset\tau_m=\sum_{\tau_k\subset\tau_m}\#\g\subset\tau_k\sim (\#\tau_k\subset\tau_m)(\lambda_k)  \]
where we only count the $\g$ or $\tau_k$ for which $f_\g$ or $f_{\tau_k}$ is nonzero. Now we may apply Proposition \ref{mainprop}. Note that since $\log R\le \e^{-1}R^\e$ for all $R\ge 1$, the accumulated constant from this pigeonholing process satisfies $C^{\e^{-1}}(\log R)^{O(\e^{-1})}\le C_\e R^\e$. It thus suffices to prove Theorem \ref{main} assuming that $f$ is $(R,\e)$-normalized.

Now we present a broad-narrow argument adapted to our set-up. Write $K=R^\d$ for some $\d>0$ which will be chosen later. Since $|f(x)|\le\sum_{\ell(\tau)=K^{-1}}|f_\tau(x)|$, there is a universal constant $C>0$ so that $|f(x)|>K^C\max_{\substack{\ell(\tau)=\ell(\tau')=K^{-1}\\\tau,\tau'\,\,\text{nonadj.}}}|f_\tau(x)f_{\tau'}(x)|^{1/2}$ implies $|f(x)|\le C\max_{\ell(\tau)=K^{-1}}|f_{\tau}(x)|$. If $|f(x)|\le K^C\max_{\substack{\ell(\tau)=\ell(\tau')=K^{-1}\\\tau,\tau'\,\,\text{nonadj.}}}|f_\tau(x)f_{\tau'}(x)|^{1/2}$ and $K^C\max_{\substack{\ell(\tau)=\ell(\tau')=K^{-1}\\\tau,\tau'\,\,\text{nonadj.}}}|f_\tau(x)f_{\tau'}(x)|^{1/2}\le C\max_{\ell(\tau)=K^{-1}}|f_{\tau}(x)|$, then $|f(x)|\le  C\max_{\ell(\tau)=K^{-1}}|f_{\tau}(x)|$. Using this reasoning, we obtain the first step in the broad-narrow inequality
\begin{align*}
    |f(x)|&\le C\max_{\ell(\tau)=K^{-1}}|f_{\tau}(x)|+K^C\max_{\substack{\ell(\tau)=\ell(\tau')=K^{-1}\\\tau,\tau'\,\text{nonadj.}\\ C\underset{\ell(\tau_0)=K^{-1}}{\max}|f_{\tau_0}(x)|\le K^{C}|f_{\tau}(x)f_{\tau'}(x)|^{1/2}}}|f_{\tau}(x)f_{\tau'}(x)|^{1/2} .
\end{align*}
Iterate the inequality $m$ times (for the first term) where $K^m\sim R^{1/2}$ to bound $|f(x)|$ by
\begin{align*}
    |f(x)|&\lesssim  C^m \max_{\ell(\tau)=R^{-1/2}}|f_{\tau}(x)|\\
    &\qquad+C^mK^C\sum_{\substack{R^{-1/2}<\Delta<1\\\Delta\in K^\N}} \max_{\substack{\ell(\tilde{\tau})\sim \Delta}}\max_{\substack{\ell(\tau)=\ell(\tau')\sim K^{-1}\Delta\\\tau,\tau'\subset\tilde{\tau},\,\,\text{nonadj.}\\
    C\underset{\substack{\ell(\tau_0)=K^{-1}\Delta\\ \tau_0\subset\tilde{\tau}}}{\max}|f_{\tau_0}(x)|\le K^C|f_{\tau}(x)f_{\tau'}(x)|^{1/2}}}|f_{\tau}(x)f_{\tau'}(x)|^{1/2} .
\end{align*}
Recall that our goal is to bound the size of the set
\[ U_\a=\{x\in\R^2:\a\le |f(x)|\}. \]
By the triangle inequality and using the notation $\theta$ for blocks $\tau$ with $\ell(\tau)=R^{-1/2}$
\begin{align}
\label{broadnar1}    &|U_\a|\le |\{x\in\R^2:\a\lesssim C^m\max_{\theta}|f_{\theta}(x)|\}|
    +\sum_{\substack{R^{-1/2}<\Delta<1\\\Delta\in K^\N}} \sum_{\substack{\ell(\tilde{\tau})\sim \Delta\\\ell(\tau)=\ell(\tau')\sim K^{-1}\Delta\\\tau,\tau'\subset\tilde{\tau},\,\,\text{nonadj.}}}|U_\a(\tau,\tau')|
\end{align}
where $U_\a(\tau,\tau')$ is the set
\[\{x\in\R^2:\a\lesssim (\log R)C^mK^C|f_{\tau}(x)f_{\tau'}(x)|^{1/2},\,\,C(|f_\tau(x)|+|f_{\tau'}(x)|)\le K^C|f_{\tau}(x)f_{\tau'}(x)|^{1/2} \}. \]
The first term in the upper bound from \eqref{broadnar1} is bounded trivially by $\frac{\lambda(R^{-1/2})^2}{\a^4}\sum_{\g}\|f_\g\|_2^2$. By the assumption that $\|f_\g\|_\infty\lesssim 1$ for every $\g$, we know that $|f_\tau|\lesssim R^{\b}$ for any $\tau$. Also assume without loss of generality that $\a>1$ (otherwise Theorem \ref{main} follows from $L^2$-orthogonality). This means that there are  $\sim\log R$ dyadic values of $\a'$ between $\a$ and $R^\b$ so by pigeonholing, there exists $\a'\in[\a/(C^mK^C),R^\b]$ so that
\[ |U_\a(\tau,\tau')|\lesssim (\log R+\log (C^mK^C))|\text{Br}_{\a'}(\tau,\tau')|
\]
where the set $\text{Br}_{\a'}(\tau,\tau')$ is defined in \eqref{broad2}. By parabolic rescaling, there exists an affine transformation $T$ so that $f_\tau\circ T=g_{\underline{\tau}}$ and $f_{\tau'}\circ T=g_{\underline{\tau}'}$ where $\underline{\tau}$ and $\underline{\tau'}$ are $\sim K^{-1}$-separated blocks in $\mc{N}_{\Delta^{-2}R^{-1}}(\P^1)$. Note that the functions $g_{\underline{\tau}}$ and $g_{\underline{\tau}'}$ inherit the property of being $(\Delta^2R,\e)$-normalized in the sense required to apply Proposition \ref{mainprop} in each of the following cases. 

\noindent{\underline{Case 1:}} Suppose that for some $\b'\in[\frac{1}{2},1]$, $\Delta^{-1}R^{-\b}=(\Delta^{2}R)^{-\b'}$. Then for each $\g\in\mc{P}(R,\b)$, $f_\g\circ T=g_{\underline{\g}}$ for some $\underline{\g}\in\mc{P}(\Delta^2R,\b')$. Applying Proposition \ref{mainprop} with functions $g_{\underline{\tau}}$ and $g_{\underline{\tau}'}$ and level set parameter $\a'$ leads to the inequality
\begin{align*} 
&|\text{Br}_{\a'}(\tau,\tau')|\le K^C\a' \}| \le C_{\e,\d} R^\e C^mK^{O(1)}\times\\
&\qquad \begin{cases} \frac{1}{(\a')^4}\underset{R^{-\b}<s<R^{-1/2}}{\max}\lambda(s^{-1}R^{-1})\lambda(s)\sum_{\g\subset\tilde{\tau}}\|f_\g\|_2^2\quad&\text{if}\quad
(\a')^2>\frac{\lambda(\Delta)^2}{\underset{s}{\max}\lambda(s^{-1}R^{-1})\lambda(s)}\\
\frac{\lambda(\Delta)^2}{(\a')^6}\sum_{\g\subset\tilde{\tau}}\|f_\g\|_2^2\quad&\text{if}\quad  (\a')^2\le \frac{\lambda(\Delta)^2}{\underset{s}{\max}\lambda(s^{-1}R^{-1})\lambda(s)} 
\end{cases}\end{align*}

\noindent{\underline{Case 2:}} Now suppose that $\Delta^{-1}R^{-\b}<(\Delta^2R)^{-1}$. Let $\tilde{\theta}$ be $\Delta^{-1}R^{-1}\times R^{-1}$ blocks and let $\underline{\tilde{\theta}}$ be $(\Delta^2R)^{-1}\times (\Delta^2R)^{-1}$ blocks so that $f_{\tilde{\theta}}\circ T=g_{\underline{\tilde{\theta}}}$. Let $B=\max_{\tilde{\theta}}|f_{\tilde{\theta}}|$ and divide everything by $B$ in order to satisfy the hypotheses $\|g_{\underline{\tilde{\theta}}}\|_\infty/B\le 1$ for all $\underline{\tilde{\theta}}$. Let $\tilde{\lambda}(s):=\lambda(\Delta s)/\lambda(\Delta^{-1}R^{-1})$ count the number of $\underline{\tilde{\theta}}$ intersecting an $s$-arc. 
In the case $(\a')^2>\frac{\tilde{\lambda}(1)B^2}{\max_s\tilde{\lambda}(s^{-1}(\Delta^2 R)^{-1})\tilde{\lambda}(s)}$ (with the maximum taken over $(\Delta^2R)^{-1}<s<(\Delta^2 R)^{-1/2}$), use Proposition \ref{mainprop} with functions $g_{\underline{\tau}}/B$ and $g_{\underline{\tau}'}/B$ and level set parameter $\a'/B$ to get the inequality

\[|\text{Br}_{\a'}(\tau,\tau')|\le C_{\e,\d} R^\e C^mK^{O(1)}
\frac{B^4}{(\a')^4}\underset{(\Delta^2 R)^{-1}<s<(\Delta^2 R)^{-1/2}}{\max}\tilde{\lambda}(s^{-1}(\Delta^2 R)^{-1})\tilde{\lambda}(s)\sum_{\tilde{\theta}\subset\tilde{\tau}}\|f_{\tilde{\theta}}\|_2^2/B^2. \]
 
Note that since $B\le \lambda(\Delta^{-1}R^{-1})$,
\[ B^2\underset{(\Delta^2 R)^{-1}<s<(\Delta^2 R)^{-1/2}}{\max}\tilde{\lambda}(s^{-1}(\Delta^2 R)^{-1})\tilde{\lambda}(s)\le\underset{\Delta^{-1} R^{-1}<s<R^{-1/2}}{\max}{\lambda}(s^{-1} R^{-1}){\lambda}(s) \]
and 
\[ \frac{\tilde{\lambda}(1)^2B^2}{\underset{s}{\max}\tilde{\lambda}(s^{-1}(\Delta^2 R)^{-1})\tilde{\lambda}(s)} 
\le \frac{\lambda(\Delta)^2\lambda(\Delta^{-1}R^{-1})^2}{\underset{\Delta^{-1} R^{-1}<s<R^{-1/2}}{\max}{\lambda}(s^{-1} R^{-1}){\lambda}(s)}\le \lambda(\Delta^{-1}R^{-1})\lambda(\Delta)
. \] 
Then in the case $(\a')^2\le\frac{\tilde{\lambda}(1)B^2}{\max_s\tilde{\lambda}(s^{-1}(\Delta^2 R)^{-1})\tilde{\lambda}(s)}$, compute directly that 
\begin{align*}
&(\a')^4|\{x\in\R^2:\a'\sim|f_{\tau}(x)f_{\tau'}(x)|^{1/2},\,\,(|f_\tau(x)|+|f_{\tau'}(x)|)\le K^C\a' \}| \\
&\quad \lesssim \lambda(\Delta^{-1}R^{-1})\lambda(\Delta)\int_{\R^2}(|f_\tau|^2+|f_{\tau'}|^2)\lesssim\underset{\Delta^{-1}R^{-1}<s<R^{-1/2}}{\max}{\lambda}(s^{-1}R^{-1}){\lambda}(s)\sum_{\g\subset\tilde{\tau}}\|f_{\g}\|_2^2.
\end{align*}
Using also that $\sum_{\tilde{\theta}\subset\tilde{\tau}}\|f_{\tilde{\theta}}\|_2^2\le\sum_{\g\subset\tilde{\tau}}\|f_\g\|_2^2$, the bound for Case 2 is
\begin{align*} 
&|\{x\in\R^2:\a'\sim|f_{\tau}(x)f_{\tau'}(x)|^{1/2},\,\,(|f_\tau(x)|+|f_{\tau'}(x)|)\le K^C\a' \}| \\
&\quad \le C_{\e,\d} R^\e C^mK^{O(1)}
\frac{1}{(\a')^4}\underset{R^{-\b}<s<R^{-1/2}}{max}{\lambda}(s^{-1}(\Delta^2 R)^{-1}){\lambda}(s)\sum_{\g\subset\tilde{\tau}}\|f_{\g}\|_2^2.
\end{align*}

It follows from \eqref{broadnar1} and the combined Case 1 and Case 2 arguments above that
\begin{align*} 
&|U_\a| \le C_{\e,\d} R^\e C^mK^{O(1)}\times\\
&\qquad \begin{cases} \frac{1}{\a^4}\underset{R^{-\b}<s<R^{-1/2}}{\max}\lambda(s^{-1}R^{-1})\lambda(s)\sum_{\g}\|f_\g\|_2^2\quad&\text{if}\quad
\a>\frac{\lambda(1)^2}{\underset{s}{\max}\lambda(s^{-1}R^{-1})\lambda(s)}\\
\frac{\lambda(1)^2}{\a^6}\sum_{\g}\|f_\g\|_2^2\quad&\text{if}\quad  \a^2\le \frac{\lambda(1)^2}{\underset{s}{\max}\lambda(s^{-1}R^{-1})\lambda(s)} 
\end{cases}.\end{align*}
Recall that $K^m\sim R^{-1/2}$ and $K=R^{\d}$ so that $C_{\e,\d} R^\e C^mK^{O(1)}\le C_{\e,\d} R^\e C^{O(\d^{-1})}R^{O(1)\d}$. Choosing $\d$ small enough so that $R^{O(1)\d}\le R^\e$ finishes the proof. 

\end{proof}

\bibliographystyle{alpha}
\bibliography{AnalysisBibliography}

\end{document}